\documentclass[pdflatex,sn-mathphys-num]{sn-jnl}

\usepackage{graphicx}%
\usepackage{multirow}%
\usepackage{amsmath,amssymb,amsfonts}%
\usepackage{amsthm}%
\usepackage{mathrsfs}%
\usepackage[title]{appendix}%
\usepackage{xcolor}%
\usepackage{textcomp}%
\usepackage{manyfoot}%
\usepackage{booktabs}%
\usepackage{listings}%
\usepackage{subcaption} 
\usepackage{cleveref}
\usepackage[normalem]{ulem}

\usepackage{algorithm}
\usepackage{algorithmic} 
\usepackage{changes}

\theoremstyle{thmstyleone}%
\newtheorem{theorem}{Theorem}
\newtheorem{proposition}[theorem]{Proposition}%

\theoremstyle{thmstyletwo}%
\newtheorem{remark}{Remark}%

\theoremstyle{thmstylethree}%
\newtheorem{definition}{Definition}%
\newtheorem{assumption}{Assumption}

\begin{document}

\title[Parameterized Methods for Game Dynamics]{Parameterized Methods for Game Dynamics}


\author*[1]{\fnm{} \sur{Yijie Jin}}\email{yijiejin@gatech.edu}

\author[1]{\fnm{} \sur{Haomin Zhou}}\email{hmzhou@math.gatech.edu}
\equalcont{These authors contributed equally to this work.}


\affil*[1]{\orgdiv{School of Mathematics}, \orgname{Georgia Institute of Technology}, \orgaddress{\street{686 Cherry St NW}, \city{Atlanta}, \postcode{30342}, \state{Georgia}, \country{United States}}}


\abstract{
We introduce a parameterized computational framework for the evolution of strategic behavior in continuous games. We consider the collective dynamics of players through a time-dependent probability density over the strategy space, representing the likelihood of each strategy being chosen at any given time. Instead of directly solving the high-dimensional Fokker-Planck equation that governs this evolution, we represent the probability density as the pushforward of a reference distribution with parameterized pushforward maps and consider the evolution of the parameterized equations. 
The motivation for this work comes from a limitation of the parameterized Wasserstein gradient flow (PWGF) framework \cite{jin2025parameterized} when it is applied to game dynamics. PWGF provides a parameterized approach for evolution equations of probability density that can be formulated as Wasserstein gradient flows. However, not all game dynamics admit such a gradient flow formulation. 
We generalize the parameterized pushforward map framework to the non-gradient flows and apply it to the game dynamics with provable error bound in Wasserstein metric. Numerical experiments with several non-gradient systems in economics are provided to demonstrate the effectiveness of this new framework. 
}

\keywords{Game dynamics, Fokker-Planck equation, Wasserstein manifold, Deep neural networks, Parameterized framework}

\maketitle

\section{Introduction}\label{sec1}
In a non-cooperative game, a Nash equilibrium is a state in which no player can unilaterally improve their payoff by changing their chosen strategy. The concept of Nash equilibrium was formally proposed by John Nash \cite{nash1950equilibrium} in 1950, while the analysis of game equilibrium dates back to the time of Antonie A. Cournot \cite{cournot1838recherches}, who applied it to his competition of oligopolies in 1838. The model considers a market in which a finite number of firms produce a homogeneous good and simultaneously choose the quantities they supply.

In general, finding a Nash equilibrium in a game with multiple players or a continuous state space can be challenging. This difficulty arises from several factors. First, equilibrium analysis typically requires detailed knowledge of the game, including its structure, the players’ payoff functions, and assumptions about rationality and mutual beliefs. Second, Nash equilibria need not be unique; some games admit multiple, or even uncountably many, equilibria, making equilibrium selection a nontrivial issue. Third, even when equilibria can be identified, their strategic implications may be subtle and sometimes difficult to interpret in terms of intuitive predictions of player behavior \cite{hargreaves2004game}.

These considerations motivate the study of game dynamics, which describe how players' strategies evolve over time and help understand how equilibrium may emerge. Classical examples include replicator dynamics \cite{sandholm2010population}, best response dynamics \cite{matsui1992best, hofbauer2002global, hofbauer2007evolution}, logit dynamics \cite{hofbauer2002global, fudenberg1998theory}, and Smith dynamics \cite{smith1984stability}. Such game models provide a dynamic perspective on equilibrium computation; instead of solving directly for a Nash equilibrium, one studies the evolution of strategies under a prescribed adjustment rule and analyzes the long-time behavior of the resulting dynamical system. For example, in best response dynamics, players ``myopically'' choose their strategies that are currently optimal for them given other participants' actions. This viewpoint is particularly useful for understanding transient behaviors, stability, and equilibrium selection.

Despite their broad applicability, classical game dynamics also face limitations when applied to continuous or high-dimensional strategy spaces. Direct simulation of individual strategy trajectories captures the evolution of particular strategies, but it doesn't provide information about the likelihood of each strategy being selected. To obtain such information, it is natural to consider a distributional formulation, where strategic behavior is represented by a probability density over the continuous strategy space.

This leads to a density-level description of game dynamics. In a potential game, the incentive of all players to update their action can be described by a single global function, named the potential function \cite{rosenthal1973class, monderer1996potential}. In this case, the Fokker-Planck equation governing the evolution of the strategy density induced by the best response dynamics can be formulated as a Wasserstein gradient flow over probability measures. This connection allows methods developed for Wasserstein gradient flows, such as the parameterized Wasserstein gradient flow (PWGF) framework \cite{jin2025parameterized}, to be applied to simulate the evolution of strategy distributions and compute Nash equilibria in potential games.

However, not all games are potential games. In non-potential games, the strategic interaction generally cannot be represented as the gradient of a single potential function, and the resulting dynamics may not possess a Wasserstein gradient flow structure. Consequently, methods that rely on this structure, such as a direct application of PWGF, are no longer applicable in a straightforward way. This motivates the development of a more general parameterized framework for game dynamics that doesn't require the existence of a potential function. In this work, we consider the evolution of probability distributions over the strategy space and approximate this evolution through a parameterized pushforward map. This provides a computational approach for simulating strategic dynamics beyond the class of potential games. Besides, our framework is beneficial for sampling purpose. Once the transport map is computed, one can directly push any new initial strategy set to obtain a simulation with significantly lower cost.

The main contributions of this paper are three-folded. 
\begin{itemize}
    \item We reformulate game dynamics as McKean-Vlasov equations and propose a new method with parameterized pushforward map $T_{\theta}$ to simulate the evolution of the strategic behavior. 
    \item We derive the evolution equation for the parameter $\theta$ such that the parameterized trajectory $\pmb{y}(t)$ induced by the parameterized pushforward maps $T_{\theta}$ best approximates the exact trajectory $\pmb{x}(t)$ governed by the McKean-Vlasov equation in $L^2(\lambda)$ sense.
    \item We conduct numerical experiments on both potential and non-potential multi-player games to demonstrate the effectiveness of the proposed algorithm. 
\end{itemize}
This paper is organized as follows. In \cref{sec: literature review}, we review some related works in game theory and the computation of game. In \cref{sec: background}, we briefly review WGF \& PWGF, and rewrite the potential games in a PWGF formulation. In \cref{sec: parameterized framework}, we introduce an example of non-potential game and derive the dynamics for computing the parameter $\theta$. We also provide error analysis of our framework in Wasserstein-2 metric. In \cref{sec: Numerical Experiments}, we demonstrate our algorithm with several potential and non-potential games in high-dimensional setup. In \cref{sec: discussion}, we discuss the advantages and limitations of our framework, as well as possible directions for future work.

\section{Literature Review}
\label{sec: literature review}
Game theory is a mathematical framework for analyzing decision-making among multiple autonomous players.  Originally developed in the early 20th century, game theory has since become a foundational tool in economics, social science, computer science, evolutionary biology, and many other disciplines \cite{smith1972game, smith1973logic, smith1982evolution}. Von Neumann and Morgensten \cite{von2007theory} gave the first general mathematical formulation of game theory, focusing on two-player zero-sum games. John Nash introduced the concept of Nash equilibrium for two-player games and proved that every finite game has at least one equilibrium in mixed strategies \cite{nash1950equilibrium}.

Game dynamics are studied to understand how strategic behavior evolves over time, particularly in contexts where players or populations do not immediately reach equilibrium. Unlike static models that assume players are perfectly rational and instantly select equilibrium strategies, dynamic models offer a more realistic and algorithmic view, grounded in behavioral or biological principles \cite{hofbauer1998evolutionary, sandholm2010population, fudenberg1998theory, brown1951iterative, monderer1996fictitious, hernandez2017survey, schuster1983replicator}. 
Various game dynamics have been developed across different fields, such as replicator \cite{sandholm2010population}, best response \cite{matsui1992best, hofbauer2002global, hofbauer2007evolution}, logit \cite{hofbauer2002global, fudenberg1998theory} and Smith dynamics \cite{smith1984stability}. As the number of players increases, game dynamics are often studied through the evolution of population distributions rather than individual strategies. A similar distributional perspective also arises in mean-field games.

Mean field games are closely related to distributional formulations of game dynamics. Both frameworks characterize the aggregate behavior of a large population of interacting agents through the evolution of a time-dependent probability distribution.
In mean-field games, the distributional evolution is coupled with individual optimality conditions, leading to a system that typically consists of a Hamilton-Jacobi-Bellman equation and a Fokker-Planck equation \cite{lasry2007mean, achdou2020mean, benamou2017variational}.

\section{Preliminaries}
\label{sec: background}
In this section, we first review WGF and PWGF \cite{jin2025parameterized}, then we introduce the definition of potential games and discuss the connection between the best response dynamics and PWGF. 
\subsection{Wasserstein Gradient Flow}
Let $\mathcal{M}$ be a smooth manifold without boundary. For simplicity, we assume $\mathcal{M} = \mathbb{R}^{d}$ throughout the present work, while extension to general manifolds is straightforward. We also omit subscript $\mathcal{M}$ for all integrals unless otherwise noted. 
Denote the density function space defined on $\mathcal{M}$ as
\begin{equation}
\label{eq:PM}
    \mathcal{P}(\mathcal{M})=\Big\{\rho: \mathcal{M} \to \mathbb{R} \colon\int \rho(x)dx=1,~\rho(x)\geqslant 0,~\int |x|^2\rho(x)~ dx<\infty\Big\}.
\end{equation}
 The Wasserstein-2 distance on $\mathcal{P} (\mathcal{M})$ \cite{Lafferty,otto-PME} is defined by
\begin{equation}
\label{eq:W2}
    W(\rho_1, \rho_2) = \Big(\inf_{\pi\in \Pi (\rho_1, \rho_2)}\iint |x-y|^2d\pi(x, y) \Big)^{1/2}
\end{equation}
for any $\rho_1,\rho_2 \in \mathcal{P}(\mathcal{M})$, where $\Pi (\rho_1, \rho_2)$ is the joint density space with $\rho_1$ and $\rho_2$ as marginals.
Then $\mathcal{P}(\mathcal{M})$ becomes an infinite-dimensional Riemannian manifold with $W$ inducing its Riemannian metric, and a Wasserstein gradient flow (WGF) can be written in the following general form:
\begin{align}
\label{equ: WGF}
    \frac{\partial \rho}{\partial t}= - \textrm{grad}_{W}\mathcal{F}(\rho) = \nabla\cdot \Big(\rho(x) \nabla \frac{\delta \mathcal{F}}{\delta\rho}(x)\Big), \quad \rho(0, x)=\rho_0(x),
\end{align}
where $x\in \mathcal{M}$, $\rho_0$ is a given initial probability density, $\mathcal{F}: \mathcal{P}(\mathcal{M}) \rightarrow \mathbb{R}$ is some energy functional defined on $\mathcal{P}(\mathcal{M})$, $\textrm{grad}_{W}$ stands for the gradient of functionals on $\mathcal{P}(\mathcal{M})$ with respect to the Wasserstein metric, and $\frac{\delta \mathcal{F}}{\delta\rho}$ is the first variation of $\mathcal{F}$ in the $L^2$ sense.
We denote the tangent space of $\mathcal{P}(\mathcal{M})$ at $\rho$ by
\begin{align*}
    \mathcal{T}_\rho\mathcal{P}(\mathcal{M})=\Big\{\sigma\in C^{\infty}(\mathcal{M}) \colon \int \sigma(x)\, dx=0\Big\}, 
\end{align*}
which is the same for different $\rho$ \cite{otto-PME}.
\subsection{Pushforward Maps and PWGF}
Fix any reference probability distribution $\lambda$ that is absolutely continuous with respect to the standard Lebesgue measure $\mu$ on $\mathcal{M}$, we use $\varrho=d\lambda /d \mu$, the Radon-Nikodym derivative of $\lambda$ with respect to $\mu$, to denote the reference density determined by $\lambda$. Then for any pushforward map $T: \mathbb{R}^d\rightarrow\mathbb{R}^d$, it induces a new probability measure which is absolutely continuous to the Lebesgue measure $\mu$. Hence, there exists a probability density $T_{\sharp}\varrho$ on $\mathbb{R}^{d}$ defined by
\begin{align*}
    \int_{E} T_{\sharp}\varrho (x) \,d\mu(x) =\int_{T^{-1}(E)} \varrho(z) \,d\mu(z) & = \int_{T^{-1}(E)} d\lambda(z) = \lambda (T^{-1}(E))\\
    & \quad \textrm{ for all measurable } E\subset \mathbb{R}^d,
\end{align*}
where $T^{-1}(E)$ is the pre-image of $E$. Hereafter we use $dx$ instead of $d\mu(x)$ to reduce notation complexity.
Let us take $T_{\theta}$ as a parameterized map, namely for any $\theta\in \Theta$, $T_{\theta}: \mathbb{R}^d\rightarrow\mathbb{R}^d$ is a parametric function with parameter $\theta$. 
The parameterized Wasserstein gradient flow (PWGF) can be expressed as
\begin{align}
\label{eq:PFPE}
    \dot\theta = -\widehat{G}(\theta)^{\dagger}\nabla_{\theta} F(\theta),
\end{align}
where $F(\theta):=\mathcal{F}(\rho_{\theta})$ and $\widehat{G}(\theta)^{\dagger}$ is the minimal norm pseudo inverse of $\widehat{G}(\theta)$ with
\begin{align}
        \widehat{G}(\theta)=\int  \partial_{\theta} T_\theta(z)^{\top} \partial_{\theta} T_\theta(z)~d\lambda(z).\label{relaxed metric tensor}
\end{align}
\subsection{Potential Game}
\begin{definition}[Exact Potential Game \cite{monderer1996potential}]
    Consider a game $\mathcal{G}$ with $d$ players, denote the strategy space by $\mathbb{S} = \mathbf{S}_1 \times \mathbf{S}_2 \times \cdots \times \mathbf{S}_d$ and payoff functions by $\Pi_i, \forall i \in [d] = \{1, \cdots, d\}$. The game $\mathcal{G}$ is an exact potential game if and only if a potential function $F: \mathbb{S} \rightarrow \mathbb{R}$ exists such that, $\forall i \in [d]$, 
    \begin{align}
        \Pi_i(x_i, y_{-i}) - \Pi_i(y_i, y_{-i}) = F(x_i, y_{-i}) - F(y_i, y_{-i}), \quad \forall x_i, y_i \in \mathbf{S}_i; \forall y_{-i} \in \mathbf{S}_{-i}
    \end{align}
    where $y_{-i}$ is the notation for all players except for $y_i$ and $\mathbf{S}_{-i}$ is joint strategy space of all players other than $y_i$.
\end{definition}
\begin{remark}
    Suppose each strategy set $\mathbf{S}_i \subset \mathbb{R}$ is a continuous interval and each payoff function $\Pi_i \in C^1(\mathbb{R})$, then $\mathcal{G}$ is a continuous game. For such a game to be an exact potential game, an equivalent definition is, $\forall i \in [d]$, 
    \begin{align}
        \frac{\partial \Pi_i(x_i, x_{-i})}{\partial x_i} = \frac{\partial F(x_i, x_{-i})}{\partial x_i}, \quad \forall x_i \in \mathbf{S}_i, x_{-i} \in \mathbf{S}_{-i}.
    \end{align}
\end{remark}
\subsection{Connection between the best response dynamic and PWGF}
Consider a game with $d$ players and a continuous strategy space $\mathbb{S} = \mathbf{S}_1 \times \mathbf{S}_2 \times \cdots \times \mathbf{S}_d = \mathbb{R}^d$. Denote the strategy of the players by $\widetilde{\pmb{x}} = (\widetilde{x}_1, \widetilde{x}_2, ..., \widetilde{x}_d)^T$. For any strategy $\widetilde{\pmb{x}} \in \mathbb{S}$, denote the probability density of $\widetilde{\pmb{x}}$ being the strategy at time $t$ by $\rho(\widetilde{\pmb{x}},t)$. Suppose the payoff function of each player is $\Pi_i(\widetilde{x}_i, \widetilde{x}_{-i})$ for $i \in [d]$. 
The deterministic best response dynamics \cite{blume1993statistical, benaim1999mixed} is 
\begin{align}
    \label{eq: deterministic_BRD}
    \frac{d \widetilde{x}_i(t)}{dt} = \frac{\partial \Pi_i(\widetilde{x}_i, \widetilde{x}_{-i})}{\partial \widetilde{x}_i}, \quad \text{for all } i \in [d]
\end{align}
and the perturbed (stochastic) version \cite{hofbauer2002global, hofbauer2007evolution} is 
\begin{align}
    \label{eq: stochastic_BRD}
    d \widetilde{x}_i(t) = \frac{\partial \Pi_i(\widetilde{x}_i, \widetilde{x}_{-i})}{\partial \widetilde{x}_i} dt + \sigma_i \, dW_t^{(i)}, \quad \text{for all } i \in [d]
\end{align}
where $\sigma_i \in \mathbb{R}$ and $\{dW_t^{(i)}\}_{i=1}^d$ is a set of independent standard Wiener processes. The stochastic best response dynamics is considered as players don't always make rational decisions, and the diffusion term models the behavior of random decisions. Meanwhile, diffusion helps the dynamics to jump out of unstable Nash equilibria, if there are any. Throughout this work, unless otherwise specified, we use best response dynamics to refer to its stochastic version.

The evolution of probability density $\rho(\widetilde{\pmb{x}},t)$ can be described by the following Fokker-Planck equation
\begin{align}
    \label{eq: game_FPE}
    \frac{\partial \rho(\widetilde{\pmb{x}}, t)}{\partial t} & = - \nabla \cdot \left[ \pmb{\mu}(\widetilde{\pmb{x}},t)
    \rho(\widetilde{\pmb{x}},t) \right] + \frac{1}{2}  D \Delta_{\widetilde{\pmb{x}}} \left( \rho(\widetilde{\pmb{x}},t) \right) \nonumber \\
    & = - \sum_{i=1}^d \frac{\partial}{\partial \widetilde{x}_i} \left( \rho(\widetilde{\pmb{x}}, t) \frac{\partial \Pi_i(\widetilde{x}_i, \widetilde{x}_{-i})}{\partial \widetilde{x}_i }\right) + \frac{1}{2} \sum_{i,j = 1}^d \frac{\partial^2}{\partial \widetilde{x}_i \partial \widetilde{x}_j} \left( D_{ij} \rho(\widetilde{\pmb{x}},t) \right).
\end{align}
where $\pmb{\mu} = \left( \frac{\partial \Pi_1(\widetilde{x}_1, \widetilde{x}_{-1})}{\partial \widetilde{x}_1}, \frac{\partial \Pi_2(\widetilde{x}_2, \widetilde{x}_{-2})}{\partial \widetilde{x}_2}, ..., 
\frac{\partial \Pi_d(\widetilde{x}_d, \widetilde{x}_{-d})}{\partial \widetilde{x}_d} \right)^T$ is the drift function and $D = [D_{i,j}]$ is the $d \times d$ diffusion matrix with $D = \pmb{\sigma}\pmb{\sigma}^T, \pmb{\sigma} = \left( \sigma_1, \sigma_2, ..., \sigma_d \right)^T$. If $\pmb{\mu}$ is a gradient field, then \eqref{eq: game_FPE} can be written as a WGF. Therefore, PWGF can be directly applied to compute the approximated dynamics for $\widetilde{\pmb{x}}(t)$.

If the payoff function is not the same for each player, this game is not necessarily a potential game which imposes challenges in finding the Nash equilibrium. Meanwhile, as $\pmb{\mu}(\widetilde{\pmb{x}},t)$ is not a gradient field, the Fokker-Planck equation defined above loses the gradient flow structure and cannot be fit into the computational framework introduced in PWGF \cite{jin2025parameterized} directly. 
\begin{theorem}
    In general, for distinct payoff functions $\Pi_i(\widetilde{x}_i, \widetilde{x}_{-i}), i \in [d]$, the Fokker-Planck equation \eqref{eq: game_FPE} is not a Wasserstein gradient flow. 
\end{theorem}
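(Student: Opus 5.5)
The plan is to show that the Fokker--Planck equation \eqref{eq: game_FPE} has the form \eqref{equ: WGF} for some energy $\mathcal{F}$ only when the drift $\pmb{\mu}$ is a gradient field. Then I exhibit distinct payoffs for which $\pmb{\mu}$ is not a gradient. To keep the argument clean, I take isotropic diffusion. Wherever $D$ has an invertible square root, a linear change of variables reduces to this case, and then the gradient condition becomes that $D^{-1}\pmb{\mu}$ is a gradient.

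\textbf{Step 1: the energy is forced.} Suppose \eqref{eq: game_FPE} is a WGF with energy $\mathcal{F}$. The candidate energy is
\begin{align*}
\mathcal{F}(\rho)=\int V\rho\,dx+\tfrac{\sigma^2}{2}\int\rho\log\rho\,dx,
\end{align*}
with $\nabla V=-\pmb{\mu}$. I would not assume this form; I would derive it. Equality of the right-hand sides for every smooth positive $\rho$ gives
\begin{align*}
\nabla\cdot\Big(\rho\big(\nabla\tfrac{\delta\mathcal{F}}{\delta\rho}+\pmb{\mu}-\tfrac{\sigma^2}{2}\nabla\log\rho\big)\Big)=0 .
\end{align*}
Set $w=\nabla\frac{\delta\mathcal{F}}{\delta\rho}-\frac{\sigma^2}{2}\nabla\log\rho$, which is a gradient field. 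The display then says $\rho(w+\pmb{\mu})$ is divergence-free for every $\rho$.

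\textbf{Step 2: the drift must be a gradient.} I evaluate the identity at a Gaussian $\rho$ centered at an arbitrary point $x_0$ and concentrate it. Alternatively, I vary $\rho$ along a curve and compare first variations. The goal is to conclude that $\pmb{\mu}$ is itself a gradient. The cleaner route goes through stationarity. Consider the stationary solution $\rho_\infty$, and assume the game has one, for instance when the payoffs are confining. A WGF stationary state minimizes $\mathcal{F}$ and satisfies zero flux, $\rho_\infty\nabla\frac{\delta\mathcal{F}}{\delta\rho}(\rho_\infty)=0$. This yields $\pmb{\mu}=\frac{\sigma^2}{2}\nabla\log\rho_\infty$, a gradient. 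By contrast, a non-gradient drift generically produces a stationary state with nonzero divergence-free flux $J=\rho_\infty\pmb{\mu}-\frac{\sigma^2}{2}\nabla\rho_\infty$, and this contradicts the zero-flux property. I expect this step to be the main obstacle. The Helmholtz-type decomposition $\pmb{\mu}=-\nabla V+b$ must be handled carefully so that the divergence-free remainder $b$ cannot be absorbed into $\nabla\frac{\delta\mathcal{F}}{\delta\rho}$ for all $\rho$. My first attempt would be the stationary-flux argument above, which avoids functional-analytic subtleties.

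\textbf{Step 3: an explicit counterexample.} With $d=2$, choose
\begin{align*}
\Pi_1=-\tfrac12\widetilde{x}_1^2+a\,\widetilde{x}_1\widetilde{x}_2,\qquad \Pi_2=-\tfrac12\widetilde{x}_2^2+b\,\widetilde{x}_1\widetilde{x}_2,\qquad a\neq b .
\end{align*}
Then $\pmb{\mu}=(-\widetilde{x}_1+a\widetilde{x}_2,\;-\widetilde{x}_2+b\widetilde{x}_1)$, and $\partial_2\mu_1-\partial_1\mu_2=a-b\neq0$, so $\pmb{\mu}$ is not a gradient. If $|ab|<1$, the drift is a stable linear (Ornstein--Uhlenbeck) field. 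The stationary law is then an explicit Gaussian $N(0,\Sigma)$, where $\Sigma$ solves the Lyapunov equation $A\Sigma+\Sigma A^\top+\sigma^2 I=0$ with $A=\begin{pmatrix}-1&a\\ b&-1\end{pmatrix}$. A direct computation should show that the stationary flux $J=\rho_\infty(A+\frac{\sigma^2}{2}\Sigma^{-1})x$ is nonzero precisely because $A$ is not symmetric. This confirms Step 2 concretely and shows that a WGF structure fails for generic distinct payoffs.
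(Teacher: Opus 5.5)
\textbf{Verdict: genuine gap in Step 2.} Your Step 1 and the counterexample in Step 3 are fine. Your overall architecture (a WGF forces the drift to be a gradient, then exhibit a nonzero curl) is the paper's. The paper's proof simply asserts the first implication. Step 2 is where that implication has to be earned, and the stationary-flux route does not earn it.

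\textbf{Why the stationary-flux argument fails.} The WGF hypothesis identifies only the right-hand sides, i.e.\ the \emph{divergences} of the two fluxes. The Fokker--Planck flux $J=\rho\pmb{\mu}-\frac{\sigma^2}{2}\nabla\rho$ and the WGF flux $-\rho\nabla\frac{\delta\mathcal F}{\delta\rho}$ may differ by a divergence-free field; this is exactly what your Step 1 identity says. At a stationary state, pairing with $\frac{\delta\mathcal F}{\delta\rho}$ does give $\rho_\infty\nabla\frac{\delta\mathcal F}{\delta\rho}(\rho_\infty)=0$. Step 1 then only returns $\nabla\cdot J=0$, which is stationarity again. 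It does not give $J=0$, so ``$\pmb{\mu}=\frac{\sigma^2}{2}\nabla\log\rho_\infty$'' does not follow. A nonzero divergence-free stationary flux (non-reversibility, as in your OU computation) contradicts nothing.

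More fundamentally, for each fixed $\rho$ the weighted elliptic equation $\nabla\cdot(\rho\nabla\phi)=-\nabla\cdot(\rho\pmb{\mu})+\frac{\sigma^2}{2}\Delta\rho$ is solvable. So no argument that evaluates the identity at one density at a time can produce a contradiction, whether that density is $\rho_\infty$ or a single concentrated Gaussian. The obstruction is that $\phi_\rho=\frac{\delta\mathcal F}{\delta\rho}(\rho)$ must be the first variation of \emph{one} functional as $\rho$ varies, and Step 2 never uses this. (Separately, ``a stationary state minimizes $\mathcal F$'' is not needed and not true in general; stationary points are only critical points.)

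\textbf{A route that works.} The idea you mention in passing, varying $\rho$ along a curve, is what is needed. Take translates $\rho_m=\eta(\cdot-m)$ of a fixed smooth density $\eta$. Then
\begin{align*}
\partial_{m_k}\mathcal F(\rho_m)=\int\rho_m\,\partial_k\tfrac{\delta\mathcal F}{\delta\rho}\,dx=-\int x_k\,\nabla\cdot\Big(\rho_m\nabla\tfrac{\delta\mathcal F}{\delta\rho}\Big)\,dx=-\int\rho_m\,\mu_k\,dx .
\end{align*}
The last step substitutes the Fokker--Planck right-hand side and uses $\int x_k\Delta\rho_m\,dx=0$.
\begin{itemize}
\item Hence $m\mapsto\int\eta(x-m)\pmb{\mu}(x)\,dx$ is a gradient (of $-\mathcal F(\rho_m)$) for every $\eta$.
\item Letting $\eta$ concentrate gives $\partial_j\mu_k=\partial_k\mu_j$, which in your example contradicts $a\neq b$.
\item For the OU example you could alternatively restrict to the invariant Gaussian family. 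There the mean obeys $\dot m=Am$ in a Euclidean block of the Bures--Wasserstein metric, which forces $A=A^\top$.
\end{itemize}

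\textbf{Side remark on the diffusion matrix.} The computation above needs no isotropy: the diffusion term drops out for any constant $D$. So under the paper's Euclidean $W_2$ the necessary condition is that $\pmb{\mu}$ itself is a gradient. Your remark that $x\mapsto D^{-1/2}x$ reduces to the isotropic case with criterion ``$D^{-1}\pmb{\mu}$ is a gradient'' is about a different transport cost, since that map is not a $W_2$-isometry. This is harmless for your counterexample, which already has isotropic noise.
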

\begin{proof}
    We prove by contradiction. Suppose there exists a functional $\mathcal{F}(\rho)$ s.t. 
    \begin{align}
        \frac{\partial \rho}{\partial t} = - \nabla \cdot \left( \rho \nabla \frac{\partial \mathcal{F}}{\partial \rho}\right),
    \end{align}
    then $\left( \frac{\partial \Pi_1(x_1, x_{-1})}{\partial x_1}, \frac{\partial \Pi_2(x_2, x_{-2})}{\partial x_2}, ..., \frac{\partial \Pi_d(x_d, x_{-d})}{\partial x_d} \right)^T$ needs to be a gradient field of a scalar function. However, it doesn't hold for distinct $\Pi_i(x_i, x_{-i})$ in general. Hence, \eqref{eq: game_FPE} is not a Wasserstein gradient flow. 
\end{proof}
It motivates us to explore alternative methods for addressing non-potential games in high-dimensional contexts. In this work, we derive a parameterized equation for computing the game dynamics with the aid of its McKean-Vlasov stochastic differential equation. Since the derivation doesn't require the energy functional $\mathcal{F}(\cdot)$ as it does in PWGF, it's applicable to evolutional equation for probability densities as long as a McKean-Vlasov type SDE in particle level is known. Meanwhile, this new framework is compatible with PWGF in the following sense, if $\pmb{\mu} = \begin{pmatrix}
        \frac{\partial \Pi_1(x_1, x_{-1})}{\partial x_1}, ..., 
        \frac{\partial \Pi_d(x_d, x_{-d})}{\partial x_d}
    \end{pmatrix}^T$ is a gradient field, as in potential games, then \eqref{eq: game_FPE} reduces to a Wasserstein gradient flow.

\section{Parameterized Framework}
\label{sec: parameterized framework}

In this section, we first present a non-potential game example that cannot be formulated as a WGF. We then derive the parameterized evolution equation for computing the dynamics of non-potential games based on the McKean-Vlasov stochastic differential equation. Finally, we establish an error estimate in the Wasserstein-2 metric. 

\subsection{Non-potential Cournot Duopoly Game}
\label{cournot_duopoly}
The Cournot duopoly game is an economy model describing the industry structure of two companies competing on the quantity of the product they produce \cite{kopel1996simple}. Consider two companies that produce $x_1(t)$ and $x_2(t)$ amounts of product at time $t$, respectively. Both would like to maximize their payoff functions $\Pi_1(x_1, x_2)$ and $\Pi_2(x_1, x_2)$. In the Cournot game, their payoffs are simply modeled by the profit which is the income minus the cost. The income is the unit price $p$ multiplied by the amount of the product $x_1$ or $x_2$, and the cost is $c_1$ and $c_2$ respectively. Together, the payoff for both companies are
\begin{align}
    \Pi_1(x_1, x_2) = p(x_1, x_2) x_1 - c_1(x_1, x_2), \\
    \Pi_2(x_1, x_2) = p(x_1, x_2) x_2 - c_2(x_1, x_2).
\end{align}
The unit price can be modeled as the inverse linear function indicating the product price is decreasing as more product is available, e.g., 
\begin{align}
    p(x_1, x_2) = a - b (x_1 + x_2), \quad a, b > 0.
\end{align}
The cost functions $c_1(x_1, x_2)$ and $c_2(x_1, x_2)$ are given by
\begin{align}
    c_1(x_1, x_2) = d + ax_1 - bx_1x_2(1+2\mu) + 2b\mu x_1 x_2^2, \\
    c_2(x_1, x_2) = d + ax_2 - bx_1x_2(1+2\mu) + 2b\mu x_2 x_1^2.
\end{align}
where constant term $d$ represents the fixed cost of production, the linear term $a x_i$ captures the direct cost associated with firm $i$'s own output, the interaction term $-b x_1 x_2(1+2\mu)$ reflects a ``book-buying habit'' effect, i.e., moderate competitor
output may create market awareness, buying habits, or distribution channels, thereby reducing firm $i$'s effective cost. The nonlinear term $2b\mu x_1 x_2^2$ captures the offsetting crowding effect when the competitor's output becomes too large. Thus, the competitor's output has a non-monotone effect on firm $i$'s marginal cost, with $\mu$ controlling the strength of this externality.
The strategies they can adjust are the quantities $x_1$ and $x_2$ they produce. To determine the quantity at time $t$, the firms form expectations on the quantity of the other firm, which might depend on their own quantity and the quantity of the other firm, both produced in the previous period.

The number of Nash equilibria varies with different parameter sets for $\mu$. If $\mu \in (0, 4]$, there are four Nash equilibria 
\begin{align*}
    (0,0), \left( \frac{\mu - 1}{\mu}, \frac{\mu - 1}{\mu} \right), \left(x_1^*, x_2^* \right), (x_2^*, x_1^*)
\end{align*}
where $x_1^* = \frac{\mu + 1 - \sqrt{(\mu + 1)(\mu - 3)}}{2\mu}$ and $x_2^* = \frac{\mu + 1 + \sqrt{(\mu + 1)(\mu - 3)}}{2\mu}$. Note that the last two Nash equilbria can be complex for some $\mu$ and we only consider the real Nash equilibria in numerical experiments. 
More examples are presented in \cref{sec: Numerical Experiments}.

\subsection{Derivation of the Parameterized Equation}
Consider the continuous game introduced in the previous section. The stochastic best response dynamics are described in \eqref{eq: stochastic_BRD} and the Fokker-Planck equation describing the evolution of the probability density $\rho(\pmb{x},t)$ is \eqref{eq: game_FPE}. The McKean-Vlasov type SDEs corresponding to \eqref{eq: game_FPE} is
\begin{align}
    \label{eq: Vlasov-game}
    \frac{\mathrm{d} x_i(t)}{\mathrm{d} t} = \frac{\partial \Pi_i(x_i, x_{-i})}{\partial x_i} - \frac{1}{2} \sigma_i^2 \frac{\partial \log \rho(\pmb{x},t)}{\partial x_i}, \quad \forall i \in [d] 
\end{align}
Note that \eqref{eq: stochastic_BRD} and \eqref{eq: Vlasov-game} matches in density level, in the sense that the dynamics of the probability distribution $\rho(\pmb{x}, t)$ and $\rho(\widetilde{\pmb{x}}, t)$ can both be described by \eqref{eq: game_FPE}. 
Hence, we can take advantage of the equivalence of the two SDEs in density level and derive the parameterized equation, i.e., the dynamics of $\theta(t)$ accordingly.

\begin{definition}[Pushforward Map $T$]
    For any $\pmb{x}_0 \sim \lambda$, consider $\pmb{x}(0) = \pmb{x}_0$ and $\pmb{x}(t) \in \mathbb{R}^d, t \in [0, \infty)$ follows the McKean-Vlasov equation \eqref{eq: Vlasov-game}; $\widetilde{\pmb{x}}(0) = \pmb{x}_0$ and $\widetilde{\pmb{x}}(t) \in \mathbb{R}^d, t \in [0, \infty)$ follows the stochastic best response dynamics \eqref{eq: stochastic_BRD}. $\pmb{x}(t)$ and $\widetilde{\pmb{x}}(t)$ match in density level, i.e., $\rho(\widetilde{\pmb{x}}, t)$ and $\rho(\pmb{x}, t)$ follow the same Fokker-Planck equation as defined in \eqref{eq: game_FPE}. The pushforward map $T(\cdot): \mathbb{R}^d \rightarrow \mathbb{R}^d$ is defined such that $\pmb{x}(t) = T(\pmb{x}(0)), \forall t \in [0, \infty)$.
\end{definition}
In the following proposition, we show the derivation of the parameterized equation \eqref{eq: game_parameter_equation}. The idea of this proposition is as follows. For the pushforward map $T$ such that $\pmb{x}(t) = T( \pmb{x}(0))$ denotes the dynamics of the strategies, consider an approximated trajectory $\pmb{y}(t) = T_{\theta(t)}(\pmb{x}(0))$ with the parameterized pushforward map $T_{\theta(t)}$. We derive the dynamics of the parameter $\theta$ by calculating the best approximation $\pmb{y}(t)$ for the actual dynamics $\pmb{x}(t)$ in $L^2(\lambda)$ sense. 

\begin{proposition}
    Consider a continuous game with $d$ players, suppose the strategies evolve according to the stochastic best response dynamics \eqref{eq: Vlasov-game} and the strategies are approximated with the pushforward map $T_{\theta(t)}$ in $L^2(\lambda)$ sense, then the parameter equation for the game is 
    \begin{align}
        \label{eq: game_parameter_equation}
        \dot \theta(t) = \widehat{G}(\theta)^{\dagger} \int \left[ \frac{\partial T_{\theta} (z)}{\partial \theta} \right]^T 
        \left[
        \begin{pmatrix}
            \frac{\partial \Pi_1(T_{\theta}(\mathbf{z)}_1, T_{\theta}(\mathbf{z)}_{-1})}{\partial x_1} \\
            \vdots \\
            \frac{\partial \Pi_d(T_{\theta}(\mathbf{z)}_d, T_{\theta}(\mathbf{z)}_{-d})}{\partial x_d}
        \end{pmatrix} - 
        \frac{1}{2} D \nabla_{\pmb{x}} \log \rho\left(T_{\theta}(\mathbf{z}),t\right) 
        \right] d\lambda(\mathbf{z})
    \end{align}
    where $\widehat{G}(\theta)$ is the pullback Wasserstein metric on the parameter space and $\widehat{G}(\theta)^{\dagger}$ is its Moore-Penrose pseudo inverse introduced in \cite{jin2025parameterized, wu2025parameterized}. $D = \pmb{\sigma} \pmb{\sigma}^T$ is the diffusion tensor. $\lambda$ is the probability measure of the initial distribution such that $d\lambda / d\mu = p(\cdot, 0)$ and $z$ is randomly drawn from $\lambda$.   
\end{proposition}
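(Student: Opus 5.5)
The approach is a least-squares (Galerkin) projection of the exact velocity field onto the tangent directions generated by $\theta$, in $L^2(\lambda)$. Fix a time $t$ and assume the parameterized trajectory matches the exact one at that time, i.e.\ $\pmb{y}(t)=T_{\theta(t)}(\mathbf{z})$ coincides with $\pmb{x}(t)$, so that $\rho(\cdot,t)=(T_{\theta(t)})_\sharp\varrho$. By the chain rule, the parameterized particle velocity is
\[
\frac{d}{dt}T_{\theta(t)}(\mathbf{z}) = \frac{\partial T_\theta(\mathbf{z})}{\partial \theta}\,\dot\theta .
\]
By \eqref{eq: Vlasov-game}, the exact velocity at the same point is
\[
V(\mathbf{z}) := \pmb{\mu}(T_\theta(\mathbf{z})) - \tfrac12 D\nabla_{\pmb{x}}\log\rho(T_\theta(\mathbf{z}),t).
\]
For a general $D=\pmb\sigma\pmb\sigma^T$ the McKean--Vlasov drift should be written as $-\tfrac12 D\nabla\log\rho$ rather than componentwise. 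This is consistent with \eqref{eq: game_FPE}, since $\tfrac12\nabla\cdot(D\nabla\rho)=\tfrac12\nabla\cdot(\rho D\nabla\log\rho)$. This reduces the diagonal form in \eqref{eq: Vlasov-game} to the vector form appearing in \eqref{eq: game_parameter_equation}.

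Next, choose $\dot\theta$ to minimize the instantaneous $L^2(\lambda)$ mismatch of velocities,
\[
\min_{\dot\theta}\ \frac12\int \Big|\frac{\partial T_\theta(\mathbf{z})}{\partial\theta}\dot\theta - V(\mathbf{z})\Big|^2 d\lambda(\mathbf{z}).
\]
This quadratic is convex in $\dot\theta$. Setting its gradient to zero gives the normal equations
\[
\widehat G(\theta)\dot\theta=\int \Big[\frac{\partial T_\theta}{\partial\theta}\Big]^T V\, d\lambda,
\]
with $\widehat G$ as in \eqref{relaxed metric tensor}. Taking the minimal-norm solution, i.e.\ applying the Moore--Penrose pseudo-inverse, yields \eqref{eq: game_parameter_equation}. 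Note that the right-hand side always lies in the range of $\widehat G$, because $\operatorname{range}(J^*J)=\operatorname{range}(J^*)$ for $J=\partial_\theta T_\theta$ viewed as an operator into $L^2(\lambda)$. Consequently the normal equations are solvable, and the pseudo-inverse gives the least-norm minimizer.

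The main obstacle is justifying why the velocity mismatch is the correct objective for \enquote{best approximation of $\pmb{x}(t)$ in $L^2(\lambda)$}. I would argue it as follows. Take $\pmb{y}(t)=\pmb{x}(t)$ at time $t$ and expand both to first order in $h$:
\[
\|\pmb{y}(t+h)-\pmb{x}(t+h)\|_{L^2(\lambda)}^2 = h^2\big\|\partial_\theta T_\theta\dot\theta - V\big\|^2 + o(h^2).
\]
Minimizing the leading coefficient is exactly the problem above. I would also use the density-level equivalence of \eqref{eq: stochastic_BRD} and \eqref{eq: Vlasov-game} to justify replacing the stochastic dynamics by the deterministic McKean--Vlasov flow. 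This replacement is what makes the trajectory map $T$ well defined and lets $\rho(\cdot,t)$ be identified with the pushforward density at $T_\theta(\mathbf{z})$. Regularity assumptions, namely differentiability of $T_\theta$ in $\theta$ and square-integrability of $V$ and $\partial_\theta T_\theta$ under $\lambda$, would be stated so that differentiating under the integral sign is legitimate.
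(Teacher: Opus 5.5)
Your proposal is correct and takes essentially the same route as the paper. The paper equates $\partial_\theta T_\theta\,\dot\theta$ with the McKean--Vlasov velocity evaluated at $T_\theta(\mathbf{z})$, multiplies by $[\partial_\theta T_\theta]^T$, integrates against $\lambda$ to get the normal equations $\widehat G(\theta)\dot\theta=\int[\partial_\theta T_\theta]^T f\,d\lambda$, and then applies $\widehat G(\theta)^\dagger$; this is exactly the first-order condition of your least-squares problem, and your additions (the solvability argument via $\operatorname{range}(J^*J)=\operatorname{range}(J^*)$, the short-time expansion motivating the objective, and the remark that $-\tfrac12 D\nabla\log\rho$ is the consistent drift for non-diagonal $D$) are sound refinements the paper does not spell out.
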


\begin{proof}
    Recall the kernel operator $\mathcal{K}_{\theta}$ defined in parameterized Wasserstein Hamiltonian flow and gradient flow \cite{wu2025parameterized, jin2025parameterized}, for any $f \in L^2(\mathcal{M}; \mathcal{M}, \lambda)$, $\mathcal{K}_{\theta}[f] \in L^2(\mathcal{M}; \mathcal{M}, \lambda)$ is defined as 
    \begin{align}
        \mathcal{K}_{\theta}[f] (\cdot) =  \frac{\partial T_{\theta}(\cdot)}{\partial \theta} \widehat{G}(\theta)^{\dagger} \int \left[ \frac{\partial T_{\theta}(z)}{\partial \theta} \right]^T f(z) \, d \lambda(z) = \int K_{\theta} (\cdot, z) f(z) \, d\lambda(z) 
    \end{align}
    where the kernel matrix $K_{\theta}(z', z) \in \mathbb{R}^{d \times d}$ is defined by 
    \begin{align}
        K_{\theta}(z', z) := \left[\frac{\partial T_{\theta}(z')}{\partial \theta} \right] \widehat{G}(\theta)^{\dagger} \left[ \frac{\partial T_{\theta}(z)}{\partial \theta} \right]^T 
    \end{align}
    Recall the McKean-Vlasov equation \eqref{eq: Vlasov-game}. Define a pushforward map $T$ such that $\pmb{x}(t) = T( \pmb{x}(0))$. Plug it into \eqref{eq: Vlasov-game} and write in the vector form, we obtain
    \begin{align}   
        \label{eq: game_Vlasov_x_pushforward}
        \frac{d}{dt} \pmb{x}(t) = 
        \begin{pmatrix}
            \frac{\partial \Pi_1(x_1, x_{-1})}{\partial x_1} \\
            \vdots \\
            \frac{\partial \Pi_d(x_d, x_{-d})}{\partial x_d}
        \end{pmatrix} 
        - \frac{1}{2} D \nabla_{\pmb{x}} \log \rho(T(\pmb{x}(0)), t)
    \end{align}
    Consider a parameterized pushforward map $T_{\theta(t)}$ which approximates the pushforward map $T$. Denote $\pmb{y}(t) = T_{\theta(t)}(\pmb{x}(0))$ the pushforward point by the parameterized map $T_{\theta(t)}$. Then $\pmb{y}(t)$ forms another curve in $\mathbb{R}^d$ satisfying 
    \begin{align}
        \label{eq: game_y_pushforward}
        \frac{d}{dt} \pmb{y}(t) = \frac{d}{dt} \left[ T_{\theta(t)}(\pmb{x}(0)) \right] =  \left[ \frac{\partial T_{\theta(t)} (\pmb{x}(0))}{\partial \theta} \right] \dot\theta(t), \quad \pmb{y}(0) = T_{\theta(0)} (\pmb{x}(0))
    \end{align}
    The idea of this proof is to derive $\theta(t)$ such that $\pmb{y}(t)$ is the least square approximation of the dynamics $\pmb{x}(t)$ on the tangent space spanned by $\frac{\partial T_{\theta}}{\partial \theta}$. 
    Assume that $\pmb{y}(t)$ follows 
    \begin{align}
        \label{eq: game_y_Vlasov}
        \frac{d}{dt} \pmb{y}(t) =  
        \begin{pmatrix}
            \frac{\partial \Pi_1(T_{\theta(t)}(\pmb{x}(0))_1, T_{\theta(t)}(\pmb{x}(0))_{-1})}{\partial x_1} \\
            \vdots \\
            \frac{\partial \Pi_d(T_{\theta(t)}(\pmb{x}(0))_d, T_{\theta(t)}(\pmb{x}(0))_{-d})}{\partial x_d}
        \end{pmatrix} 
        - \frac{1}{2} D \nabla_{\pmb{x}} \log \rho(T_{\theta(t)} (\pmb{x}(0)), t)
    \end{align}
    Combining \eqref{eq: game_y_pushforward} and \eqref{eq: game_y_Vlasov}, we obtain 
    \begin{align}
        \label{eq: game_combined}
        \frac{d}{dt} \pmb{y}(t) & = \left[ \frac{\partial T_{\theta(t)} (\pmb{x}(0))}{\partial \theta} \right] \dot\theta(t) \nonumber\\
        & = \begin{pmatrix}
            \frac{\partial \Pi_1(T_{\theta(t)}(\pmb{x}(0))_1, T_{\theta(t)}(\pmb{x}(0))_{-1})}{\partial x_1} \\
            \vdots \\
            \frac{\partial \Pi_d(T_{\theta(t)}(\pmb{x}(0))_d, T_{\theta(t)}(\pmb{x}(0))_{-d})}{\partial x_d}
        \end{pmatrix} - \frac{1}{2} D \nabla_{\pmb{x}} \log \rho(T_{\theta(t)} (\pmb{x}(0)), t)
    \end{align}
    Now we multiply $\left[ \frac{\partial T_{\theta(t)}}{\partial \theta} \right]^T$ to the left on both sides of \eqref{eq: game_combined} and take integration w.r.t. $\mathrm{d} \lambda(\mathbf{z})$, we obtain the following equation in distribution sense, 
    \begin{align}
        \nonumber
        & \quad \underbrace{\left( \int \left[ \frac{\partial T_{\theta(t)} (\mathbf{z})}{\partial \theta} \right]^T \left[ \frac{\partial T_{\theta(t)} (\mathbf{z})}{\partial \theta} \right] \mathrm{d} \lambda(\mathbf{z}) \right)}_{\widehat{G}(\theta)} \, \dot\theta(t) \\ \nonumber
        & = \int \left[ \frac{\partial T_{\theta(t)} (\cdot)}{\partial \theta} \right]^T \left[ 
            \begin{pmatrix}
                \frac{\partial \Pi_1 \left(T_{\theta(t)}(\cdot)_1, T_{\theta(t)}(\cdot)_{-1} \right)}{\partial x_1} \\
                \vdots \\
                \frac{\partial \Pi_d \left(T_{\theta(t)}(\cdot)_d, T_{\theta(t)}(\cdot)_{-d} \right)}{\partial x_d}
            \end{pmatrix}  
             - \frac{1}{2} D \nabla_{\pmb{x}} \log \rho(T_{\theta(t)} (\cdot), t) \right] \circ \mathbf{z} \, \mathrm{d} \lambda(\mathbf{z})
    \end{align}
    Multiply the Moore-Penrose pseudo inverse of $\widehat{G}(\theta)$ to the left on both side, we obtain the new parameterized equation
    \begin{align}
        \dot\theta(t) & = \widehat{G}(\theta)^{\dagger} \int \left[ \frac{\partial T_{\theta(t)} (\cdot)}{\partial \theta} \right]^T \left[ 
            \begin{pmatrix}
                \frac{\partial \Pi_1 \left(T_{\theta(t)}(\cdot)_1, T_{\theta(t)}(\cdot)_{-1} \right)}{\partial x_1} \\
                \vdots \\
                \frac{\partial \Pi_d \left(T_{\theta(t)}(\cdot)_d, T_{\theta(t)}(\cdot)_{-d} \right)}{\partial x_d}
            \end{pmatrix} \right. \nonumber\\
            & \quad\quad\quad\quad\quad \left.
             - \frac{1}{2} D \nabla_{\pmb{x}} \log \rho(T_{\theta(t)} (\cdot), t) \right] \circ \mathbf{z} \, \mathrm{d} \lambda(\mathbf{z})
    \end{align}
    The dynamics of $\pmb{y}(t)$ is 
    \begin{align}
        \frac{d}{dt} \pmb{y}(t) & = \left[ \frac{\partial T_{\theta(t)} (\pmb{x}(0))}{\partial \theta} \right] \dot\theta(t) \nonumber \\
        & = \left[ \frac{\partial T_{\theta(t)} (\pmb{x}(0))}{\partial \theta} \right] \widehat{G}(\theta)^{\dagger} \int \left[ \frac{\partial T_{\theta(t)} (\cdot)}{\partial \theta} \right]^T \nonumber \\
        & \quad\quad\quad \left[ 
            \begin{pmatrix}
                \frac{\partial \Pi_1 \left(T_{\theta(t)}(\cdot)_1, T_{\theta(t)}(\cdot)_{-1} \right)}{\partial x_1} \\
                \vdots \\
                \frac{\partial \Pi_d \left(T_{\theta(t)}(\cdot)_d, T_{\theta(t)}(\cdot)_{-d} \right)}{\partial x_d}
            \end{pmatrix} 
             - \frac{1}{2} D \nabla_{\pmb{x}} \log \rho(T_{\theta(t)} (\cdot), t) \right] \circ \mathbf{z} \, \mathrm{d} \lambda(\mathbf{z}) \nonumber \\
        & = \mathcal{K}_{\theta}[f(T_{\theta}; D, \Pi_1, ..., \Pi_d)(\cdot)](\pmb{x}(0))
    \end{align}
    where
    \begin{align}
        \label{eq: game_rhs_vector_in_parameter_eq}
        f(T_{\theta}; D, \Pi_1, ..., \Pi_d)(\cdot) = \begin{pmatrix}
            \frac{\partial \Pi_1 \left(T_{\theta(t)}(\cdot)_1, T_{\theta(t)}(\cdot)_{-1} \right)}{\partial x_1} \\
            \vdots \\
            \frac{\partial \Pi_d \left(T_{\theta(t)}(\cdot)_d, T_{\theta(t)}(\cdot)_{-d} \right)}{\partial x_d}
        \end{pmatrix} 
         - \frac{1}{2} D \nabla_{\pmb{x}} \log \rho(T_{\theta(t)} (\cdot), t)
    \end{align}
\end{proof}

\begin{remark}
    If \eqref{eq: game_FPE} is indeed a WGF with an energy functional $\mathcal{F}(\rho)$, then \eqref{eq: game_parameter_equation} will degenerate to the PWGF described in \cite{jin2025parameterized}
    \begin{align}
        \dot\theta(t) = -\widehat{G}(\theta)^{\dagger} \nabla_{\theta} F(\theta)
    \end{align}
    where $F(\theta) = \mathcal{F}(\rho_{\theta}) = \mathcal{F}(T_{\theta\sharp} \varrho)$.
\end{remark}

\subsection{Error Analysis in Wasserstein-2 Metric}
In this section, we develop an error estimate in Wasserstein-2 distance between the true probability density $\rho(\cdot, t)$ and the approximated probability density $T_{\theta(t) \sharp} \rho(\cdot, 0)$. Consider $f(T_{\theta}; D, \Pi_1, ..., \Pi_d)(\cdot)$ defined in \eqref{eq: game_rhs_vector_in_parameter_eq}. 
We assume that the pushforward map satisfies the following Lipschitz condition. 
\begin{assumption}
    \label{assumption: game_projection_error}
    There exists a constant $C > 0$ such that for any two pushforward maps $T$ and $\widetilde{T}$, there is 
    \begin{align}
        \int \left| f(T; D, \Pi_1, ..., \Pi_d)(z) - f(\widetilde{T}; D, \Pi_1, ..., \Pi_d)(z)  \right|^2 d \lambda(z) \leqslant C \int |T(z) - \widetilde{T}(z)|^2 d \lambda(z)
    \end{align}
    where $f(T; D, \Pi_1, ..., \Pi_d)$ is defined in \eqref{eq: game_rhs_vector_in_parameter_eq}. 
\end{assumption}

\begin{definition}
    Define the projection error as 
    \begin{align}
        \delta_0 & = \sup_{\theta \in \Theta} \min_{\xi \in \mathcal{T}_{\theta}^* \Theta} \left\{ \int \left| f(T_{\theta}; D, \Pi_1, ..., \Pi_d)(z) - \frac{\partial T_{\theta}(z)}{\partial \theta} \xi \right|^2 d \lambda(z) \right\} \\
        & = \sup_{\theta \in \Theta} \left\{ \int \left| f(T_{\theta}; D, \Pi_1, ..., \Pi_d)(z) - \mathcal{K}_{\theta(t)} \left[ f(T_{\theta}; D, \Pi_1, ..., \Pi_d)(z) \right] \right|^2 d \lambda(z) \right\}
    \end{align}
\end{definition}
The error is essentially the difference between $f(T_{\theta}; D, \Pi_1, ..., \Pi_d) [\cdot]$ and its orthogonal projection onto the tangent space spanned by $\mathcal{T}_{\rho_{\theta}} \mathcal{P}(\mathbb{R}^d)$ where $\rho_{\theta}= T_{\theta \sharp} \rho(\cdot, 0)$. 

\begin{theorem}
    Suppose \cref{assumption: game_projection_error} holds, $\theta$ evolves following \eqref{eq: game_parameter_equation} with initial value $\theta(0)$, and the probability density $\rho(\cdot, t)$ follows \eqref{eq: game_FPE}. Then the Wasserstein-2 distance between the pushforward probability density $\rho_{\theta(t)}(\cdot)$ and the true density $\rho(\cdot,t)$ satisfies
    \begin{align}
        W_2^2(\rho_{\theta(t)}(\cdot), \rho(\cdot,t)) \leqslant e^{(1+2 C)t} \epsilon_{\rho(\cdot, 0)} + \frac{2\delta_0}{1+2C} \left( e^{(1+2C)t} -1 \right)
    \end{align}
    where $\epsilon_{\rho(\cdot,0)} = W_2^2 (\rho_{\theta(0)}, \rho(\cdot, 0))$ is the initial approximation error. 
\end{theorem}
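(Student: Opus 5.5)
We compare the exact particle trajectory with the parameterized one, both started from the same reference point $\mathbf{z}\sim\lambda$. Let $\pmb{x}(t)=T_t(\mathbf{z})$ solve the McKean--Vlasov equation \eqref{eq: Vlasov-game}. Here $T_t$ is the exact pushforward map, so $T_{t\sharp}\varrho=\rho(\cdot,t)$; this holds because \eqref{eq: Vlasov-game} and \eqref{eq: game_FPE} agree at the density level. The velocity of $\pmb{x}$ is $f(T_t;D,\Pi_1,\dots,\Pi_d)(\mathbf{z})$, where the score term is evaluated with the density $T_{t\sharp}\varrho$ that $T_t$ itself generates. Let $\pmb{y}(t)=T_{\theta(t)}(\mathbf{z})$. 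By the proposition above, its velocity is $\mathcal{K}_{\theta}[f(T_\theta;\cdot)](\mathbf{z})$. (In that proposition the score term for $\pmb{y}$ is read as $\nabla\log\rho_\theta$, which is what makes $f(T_\theta)$ depend only on $T_\theta$. This is the reading under which \cref{assumption: game_projection_error} makes sense.)

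Since the coupling $(T_{\theta(t)},T_t)_\sharp\lambda$ has marginals $\rho_{\theta(t)}$ and $\rho(\cdot,t)$, we get
\[
W_2^2(\rho_{\theta(t)},\rho(\cdot,t))\le E(t):=\int |T_{\theta(t)}(\mathbf{z})-T_t(\mathbf{z})|^2\,d\lambda(\mathbf{z}).
\]
It therefore suffices to bound $E(t)$ by Grönwall. Differentiating under the integral gives
\[
\tfrac12 E'(t)=\int (T_\theta-T_t)\cdot\big(\mathcal{K}_\theta[f(T_\theta)]-f(T_t)\big)\,d\lambda .
\]
We split the second factor as $\big(\mathcal{K}_\theta[f(T_\theta)]-f(T_\theta)\big)+\big(f(T_\theta)-f(T_t)\big)$.

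For the first piece, Young's inequality gives the bound $\tfrac12 E+\tfrac12\|\mathcal{K}_\theta f(T_\theta)-f(T_\theta)\|^2_{L^2(\lambda)}\le \tfrac12E+\tfrac12\delta_0$. This uses the definition of $\delta_0$ as a supremum over $\theta$ of the squared distance from $f(T_\theta)$ to its $L^2(\lambda)$-orthogonal projection $\mathcal{K}_\theta f(T_\theta)$ onto the span of $\partial_\theta T_\theta$. For the second piece, Cauchy--Schwarz together with \cref{assumption: game_projection_error} bounds it by $\sqrt{E}\,\sqrt{C E}$. To reach the stated constant $1+2C$, we instead use Young in the form $ab\le \tfrac12 a^2+\tfrac12 b^2$ on both pieces and keep the factor 2 from $\tfrac12E'$. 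That gives
\[
E'\le E+\delta_0+E+CE .
\]
The exact constants need adjusting to land on $E'\le (1+2C)E+2\delta_0$. For example, bounding the cross term $2\int(T_\theta-T_t)\cdot(f(T_\theta)-f(T_t))$ by $E+\|f(T_\theta)-f(T_t)\|^2\le(1+C)E$ and the projection term by $E+\delta_0$ gives $(2+C)E+\delta_0$. Since $C$ is only a generic constant, the estimate can be arranged (after possibly enlarging $C$, or weighting Young's inequality as $2ab\le \epsilon a^2+b^2/\epsilon$) into the form
\[
E'(t)\le (1+2C)E(t)+2\delta_0 .
\]

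Grönwall's inequality then yields
\[
E(t)\le e^{(1+2C)t}E(0)+\frac{2\delta_0}{1+2C}\big(e^{(1+2C)t}-1\big).
\]
Finally, $E(0)=\int|T_{\theta(0)}(\mathbf{z})-\mathbf{z}|^2\,d\lambda$. This quantity is identified with $\epsilon_{\rho(\cdot,0)}$ when $T_{\theta(0)}$ is taken as the optimal map from $\rho(\cdot,0)$ to $\rho_{\theta(0)}$, or under the convention that the initial error is measured in this coupling. This step needs care: in general one only has $W_2^2\le E(0)$, so we would state the initial error through the coupling.

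The main obstacle is the identification of the exact dynamics with $f(T_t)$ evaluated using the density generated by $T_t$ itself. This is needed so that \cref{assumption: game_projection_error} applies to the pair $(T_\theta,T_t)$ and both score terms are treated consistently. The second delicate point is the constant bookkeeping and the initial-error identification just described.
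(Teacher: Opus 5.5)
Your overall architecture (Lagrangian coupling of the exact and parameterized particles, splitting the velocity defect into the projection residual $\mathcal{K}_\theta[f(T_\theta)]-f(T_\theta)$ and the Lipschitz term, Gr\"onwall, then $W_2^2\le E(t)$) is the same as the paper's. As written, though, it does not prove the stated inequality, and the problem is the initial coupling. Starting both particles from the same $z$ gives $E(0)=\int|T_{\theta(0)}(z)-z|^2\,d\lambda(z)$. In general this is strictly larger than $\epsilon_{\rho(\cdot,0)}=W_2^2(\rho_{\theta(0)},\rho(\cdot,0))$, and you end up proposing to change the statement.

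The missing idea is that nothing forces the exact particle to start at $z$. Let $\omega$ be the Monge map from $\rho_{\theta(0)}$ to $\rho(\cdot,0)$; it exists by Brenier since $\rho_{\theta(0)}$ has a density. Start the exact particle at $\mathbf{x}(0)=\omega(\mathbf{y}(0))=\omega(T_{\theta(0)}(z))$, which is what the paper does. In your notation this replaces $T_t$ by $\widetilde T_t:=T_t\circ\omega\circ T_{\theta(0)}$. Since $(\widetilde T_t)_\sharp\lambda=\rho(\cdot,t)$ and the McKean--Vlasov drift depends only on the law of $\mathbf{x}(t)$, the velocity of $\mathbf{x}(t)$ is still $f(\widetilde T_t)(z)$. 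Assumption~\ref{assumption: game_projection_error} is stated for any two pushforward maps, so it applies to the pair $(T_{\theta(t)},\widetilde T_t)$ exactly as before. Now, however, $E(0)=\int|\omega(y)-y|^2\rho_{\theta(0)}(y)\,dy=\epsilon_{\rho(\cdot,0)}$.

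The constant bookkeeping is a second, smaller gap. You may not ``enlarge $C$'', because the conclusion uses the same $C$ as the assumption. The bounds you actually derive, $E'\le(1+2\sqrt{C})E+\delta_0$ or $E'\le(2+C)E+\delta_0$, imply $E'\le(1+2C)E+2\delta_0$ only when $C\ge 1$. The paper avoids splitting before applying Cauchy--Schwarz:
\[
E'\le 2\sqrt{E}\,\big(\mathbb{E}|\dot{\mathbf{x}}-\dot{\mathbf{y}}|^2\big)^{1/2}\le E+\mathbb{E}|\dot{\mathbf{x}}-\dot{\mathbf{y}}|^2,
\qquad
\mathbb{E}|\dot{\mathbf{x}}-\dot{\mathbf{y}}|^2\le 2\delta_0+2CE .
\]
The second bound follows from $|a+b|^2\le 2|a|^2+2|b|^2$, and together they give exactly $(1+2C)E+2\delta_0$.

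Your weighted Young's inequality also works with the right choice of weight. Put weight $\tfrac12$ on the projection piece, so that $2\int(T_\theta-\widetilde T_t)\cdot(\mathcal{K}_\theta[f(T_\theta)]-f(T_\theta))\,d\lambda\le\tfrac12E+2\delta_0$. Keep plain Cauchy--Schwarz on the Lipschitz piece, which gives at most $2\sqrt{C}\,E$. Then note that $1+2C-(\tfrac12+2\sqrt{C})=2(\sqrt{C}-\tfrac12)^2\ge 0$.
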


\begin{proof}
    The proof essentially follows the idea of the error estimate for PWGF \cite{jin2025parameterized}.
    Assume the strategy profile $\pmb{x}(t)$ follows  \eqref{eq: Vlasov-game} and the dynamics of the approximation $\pmb{y}(t)$ is described by \eqref{eq: game_y_Vlasov}. Suppose the Monge map from $\rho_{\theta(0)}$ to $\rho(\cdot, 0)$ is given by $\omega$ and we assume $\pmb{x}(t), \pmb{y}(t)$ are coupled via $\pmb{x}(0) = \omega(\pmb{y}(0))$. Reformulate $\frac{d}{dt} \pmb{y}(t)$ as follows
    \begin{align}
        \frac{d}{dt} \pmb{y}(t) & = f(T_{\theta}; D, \Pi_1, ..., \Pi_d)(z) \nonumber \\
        & \quad\quad + \left( \mathcal{K}_{\theta}[f(T_{\theta}; D, \Pi_1, ..., \Pi_d)](z) - f(T_{\theta}; D, \Pi_1, ..., \Pi_d)(z) \right)
    \end{align}
    Denote $E(t) = \mathbb{E} \|\pmb{x}(t) - \pmb{y}(t) \|^2$, we compute its derivative,
    \begin{align}
        \label{eq: game_error_estimate_error_derivative}
        \frac{d}{dt} E(t) & = 2 \mathbb{E} \left[ (\pmb{x}(t) - \pmb{y}(t)) \cdot \left( \frac{d}{dt} \pmb{x}(t) - \frac{d}{dt} \pmb{y}(t)\right)\right] \\
        & \leqslant 2 \sqrt{\mathbb{E} \|\pmb{x}(t) - \pmb{y}(t) \|^2} \sqrt{\mathbb{E} \left\| \frac{d}{dt} \pmb{x}(t) - \frac{d}{dt} \pmb{y}(t) \right\|^2} \\
        & \leqslant \mathbb{E} \|\pmb{x}(t) - \pmb{y}(t) \|^2 + \mathbb{E} \left\| \frac{d}{dt} \pmb{x}(t) - \frac{d}{dt} \pmb{y}(t) \right\|^2
    \end{align}
    Note that $\pmb{x}(t)$ follows the McKean-Vlasov equation \eqref{eq: game_Vlasov_x_pushforward}, we have an estimate of the second term in the previous inequality,
    \begin{align}
        \label{eq: game_error_estimate_second_term}
        & \quad \mathbb{E} \left\| \frac{d}{dt} \pmb{x}(t) - \frac{d}{dt} \pmb{y}(t) \right\|^2 \nonumber \\
        & = \int \left| \left(f(T; D, \Pi_1, ..., \Pi_d)(z) - f(T_{\theta}; D, \Pi_1, ..., \Pi_d)(z)\right. \right) \nonumber \\
        & \quad \left. + \left(f(T_{\theta}; D, \Pi_1, ..., \Pi_d)(z) - \mathcal{K}_{\theta(t)} \left[ f(T_{\theta}; D, \Pi_1, ..., \Pi_d) \right](z)\right) \right|^2 d \lambda(z) \nonumber \\
        & \leqslant 2 \delta_0 + 2 C \mathbb{E} \|\pmb{x}(t) - \pmb{y}(t) \|^2
    \end{align}
    Plug \eqref{eq: game_error_estimate_second_term} into \eqref{eq: game_error_estimate_error_derivative} and apply Gr\"onwall's inequality, we obtain 
    \begin{align}
        E(t) \leqslant e^{(1+2 C) t} E(0) + \frac{2 \delta_0}{1+2 C} \left( e^{(1 + 2 C)t} - 1 \right)
    \end{align}
    Hence, we have 
    \begin{align}
        W_2^2(\rho_{\theta(t)}(\cdot), \rho(\cdot,t))  & = W_2^2 (\mathrm{Law}(\pmb{x}_t), \mathrm{Law}(\pmb{y}_t)) \nonumber \\
        & \leqslant \mathbb{E} \| \pmb{x}_t - \pmb{y}_t \|^2 = E(t) \nonumber \\
        & \leqslant e^{(1+2 C)t} \epsilon_{\rho(\cdot, 0)} + \frac{2 \delta_0}{1+ 2 C} \left( e^{(1+2 C)t} - 1 \right)
    \end{align}
\end{proof}

\section{Numerical Experiments}
\label{sec: Numerical Experiments}
In this section, we illustrate the power of the parameterized equation with several examples. In all examples for game dynamics, the pushforward map is represented by a Neural ODE with the structure 
\begin{align}
    T_{\theta} = \mathrm{NODE} (R_{\theta})
\end{align}
where $R_{\theta}$ is a multilayer perceptron with $3$ hidden layers. Each layer has $64$ nodes, except for the input and output layers. The experiments were conducted on a NVIDIA T4 GPU with $16$ GB memory. The algorithm to compute $\rho_{\theta(t)}$ with the dynamics of $\theta(t)$ is shown in \cref{alg: game}. 
In our algorithm, the parameter dynamics are computed by solving the normal equations associated with $\widehat{G}(\theta)$ using the minimal residual method. The parameterized pushforward map $T_{\theta}$ used in our experiments contains approximately $10,000$ parameters, with slight variations depending on the dimension $d$. We didn't observe numerical issues caused by large condition numbers in our experiments, and therefore solving the normal equations was sufficient. If ill-conditioning becomes significant, one may consider alternative approaches such as those discussed in \cite{chen2024teng} and \cite{wu2025deep}. In particular, \cite{wu2025deep} solves the associated linear system using the singular value decomposition of the Jacobian matrix to improve numerical stability.

\begin{algorithm}[H]
\caption{Computation of Parameter Equation for Game Dynamics}
\label{alg: game}
\begin{algorithmic}[1]
\STATE{Initialize the neural network $T_{\theta(0)}$ and set $\theta^{(0)} = \theta(0)$. Set the maximum iteration number to be $M$ and the time step to be $\Delta t$. }
\STATE{Sample $\{z^i\}_{i=1}^N$ from $\rho(\cdot,0)$.}
\FOR{$k=1, \cdots, M$}
\STATE{Set $t = k \cdot \Delta t$. Update $x_t^i = T_{\theta(t)}(z^i)$ for all $i \in [N]$. }
\STATE{Approximate $\nabla_x \log \rho \left( x_t^i, t \right)$ with reverse mode of Neural ODE and autodifferentiation by setting $\tilde{z}_t^i = T^{-1}_{\theta^{(k)}} \left( x_t^i \right)$.}
\STATE{Solve the linear system
\begin{align}
    \widehat{G}(\theta) \eta = \int \left[ \frac{\partial T_{\theta}(z)}{\partial \theta} \right]^T f(T_{\theta}; D, \Pi_1, ..., \Pi_d) (z) d\lambda(z)
\end{align}
to obtain $\eta$ with minimal residual method.}
\STATE{Update $\theta^{(k+1)}=\theta^{(k)} + \Delta t \cdot \eta$}
\ENDFOR
\STATE{return $\theta^{(M)}, T_{\theta^{(M)}}$}
\end{algorithmic}
\end{algorithm}

\subsection{2D Non-potential Game}
Consider the game described in \cref{cournot_duopoly}. Suppose the parameters are $a = 3, b = 1, d = 1, \mu = 2$, then the payoff functions are 
\begin{align}
    \Pi_1(x_1, x_2) = \left[ 3 - (x_1 + x_2) \right] x_1 - 1 - 3 x_1 + 5 x_1 x_2 - 4 x_1 x_2^2 \\
    \Pi_2(x_1, x_2) = \left[ 3 - (x_1 + x_2) \right] x_2 - 1 - 3 x_2 + 5 x_1 x_2 - 4 x_2 x_1^2
\end{align}
Suppose the coefficients $\sigma_1$ and $\sigma_2$ are set to be $\sigma_1 = \sigma_2 = 0.1$. In this example, we have two known Nash equilibria, the origin $(0,0)$ and $\left( \frac{1}{2}, \frac{1}{2} \right)$. We consider the sample trajectories computed with the best response dynamics as the ground truth. We initiate the strategy pairs by sampling $10000$ points from a uniform distribution in $[0,1]^2$. Euler–Maruyama scheme is used to compute the dynamics with $h = 0.001$. 


In \cref{fig: 2d_nonpotential_trajectories}, we plot the trajectories of three samples points computed by the best response dynamics and pushforward maps at $t \in [0, 0.2, 0.4, 0.6, 0.8, 1.0]$, respectively. In \cref{fig: 2d_nonpotential_brd_traj}, the curves with orange, purple, and green colors show the trajectories computed by the best response dynamics. In \cref{fig: 2d_nonpotential_pushforward_traj}, the curves with orange, purple, and green colors show the trajectories computed by pushforward maps and the blue shaded circles show the probability density at the points. The red points mark the known Nash equilibria $(0,0)$ and $(0.5, 0.5)$.

\begin{figure}[H]
    \centering
    \begin{subfigure}[t]{0.43\textwidth}
        \centering
        \includegraphics[width=\textwidth]{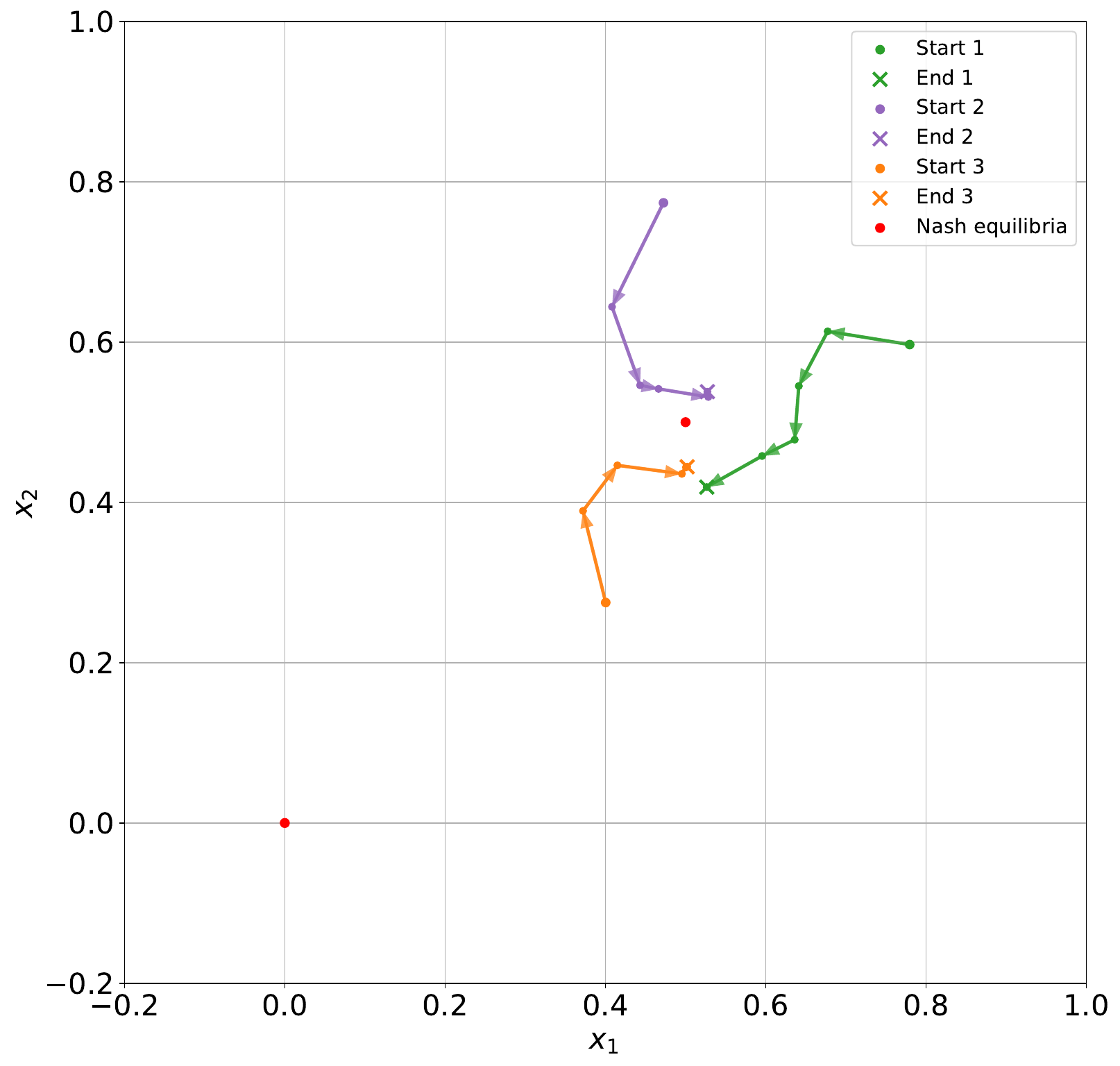}
        \caption{Best Response Dynamics}
        \label{fig: 2d_nonpotential_brd_traj}
    \end{subfigure}%
    ~ 
    \begin{subfigure}[t]{0.5\textwidth}
        \centering
        \includegraphics[width=\textwidth]{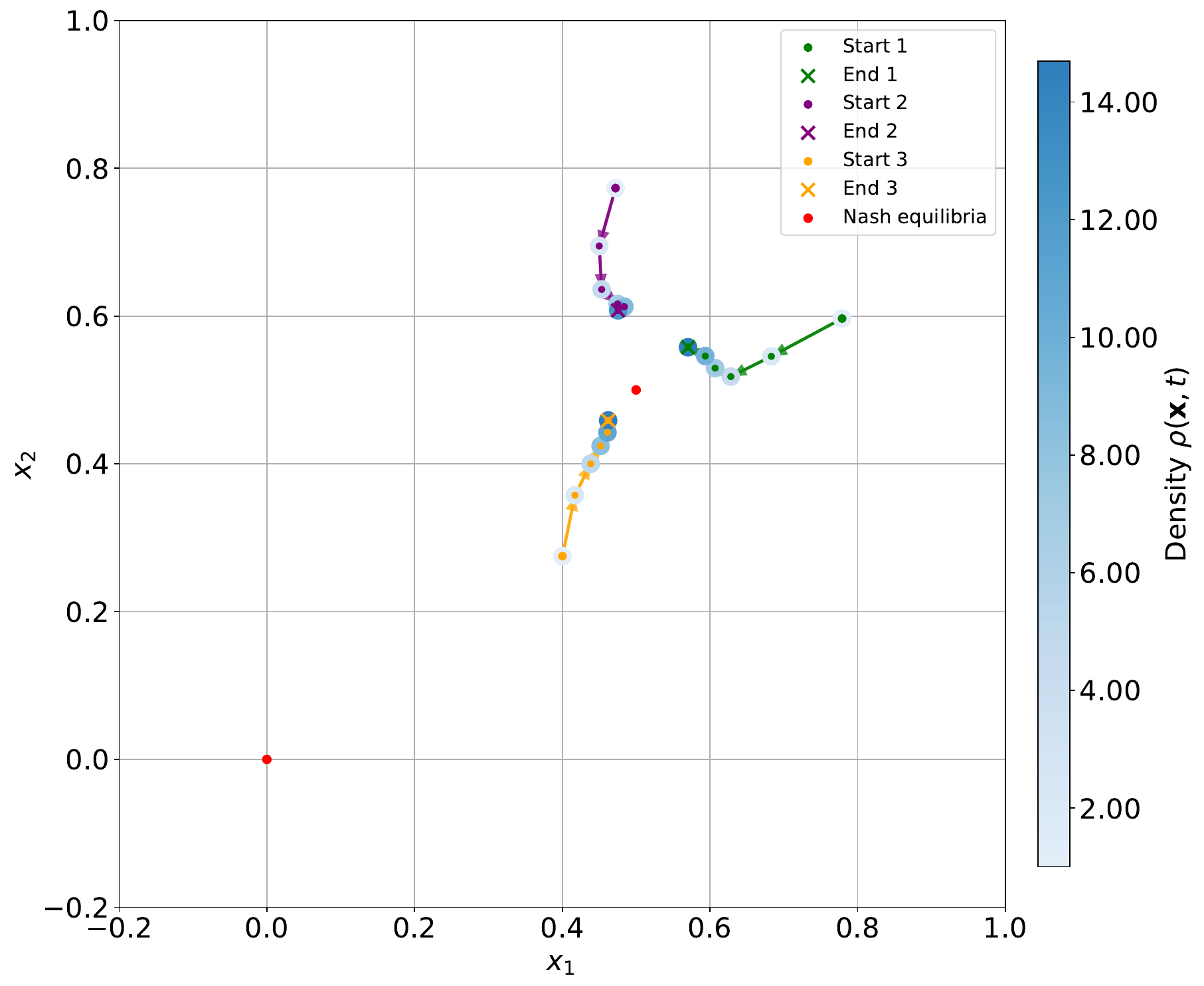}
        \caption{Pushforward Map}
        \label{fig: 2d_nonpotential_pushforward_traj}
    \end{subfigure}
    \caption{Plots of the trajectories of $3$ randomly sampled points of best response dynamics and pushforward maps. The initial samples are drawn from a uniform distribution in $[0,1]^2$. The coefficient for diffusion term is $\sigma_1 = \sigma_2 = 0.1$. }
    \label{fig: 2d_nonpotential_trajectories}
\end{figure}

We present snapshots of $1000$ sample points computed by the best response dynamics and pushforward maps at different times in \cref{fig: 2d_nonpotential_combined}. In the plots, the samples points computed by the best response dynamics and pushforward maps are marked in blue and orange, respectively. The Nash equilibria are marked in red.

\begin{figure}[H]
    \centering
    \begin{subfigure}[t]{0.3\textwidth}
        \centering
        \includegraphics[width=\textwidth]{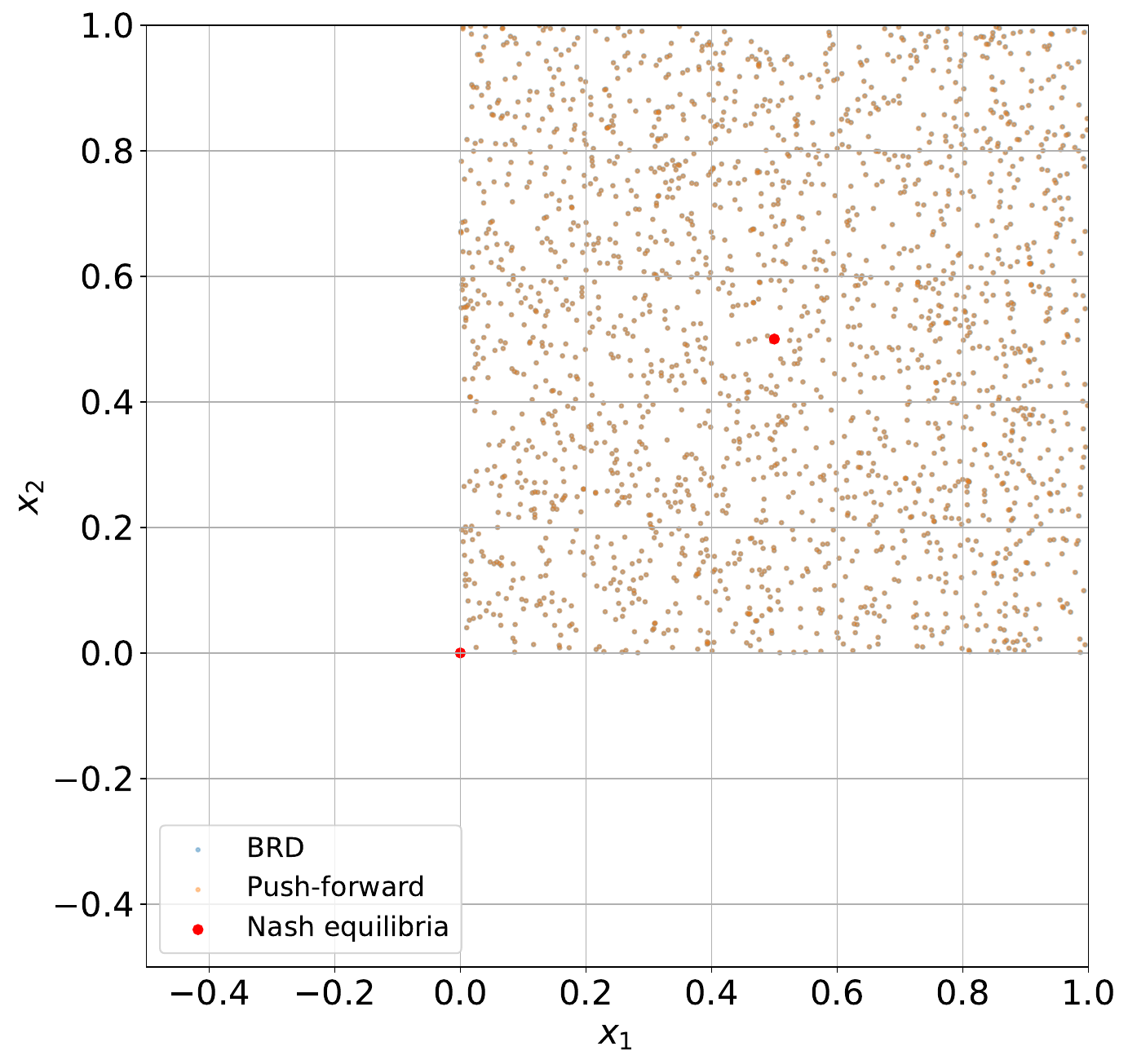}
        \caption{$t=0$}
    \end{subfigure}%
    ~ 
    \begin{subfigure}[t]{0.3\textwidth}
        \centering
        \includegraphics[width=\textwidth]{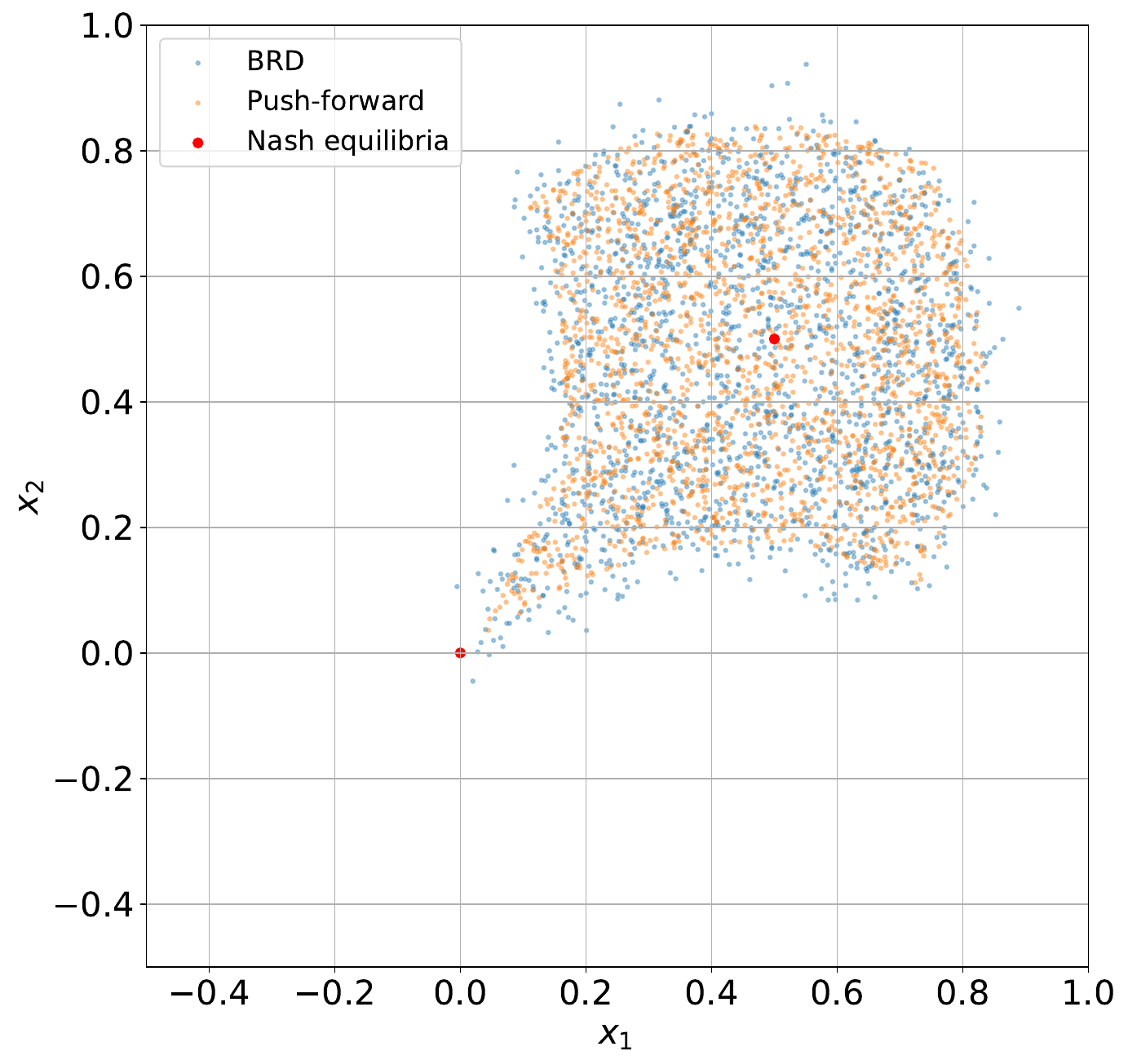}
        \caption{$t=0.2$}
    \end{subfigure}
    ~ 
    \begin{subfigure}[t]{0.3\textwidth}
        \centering
        \includegraphics[width=\textwidth]{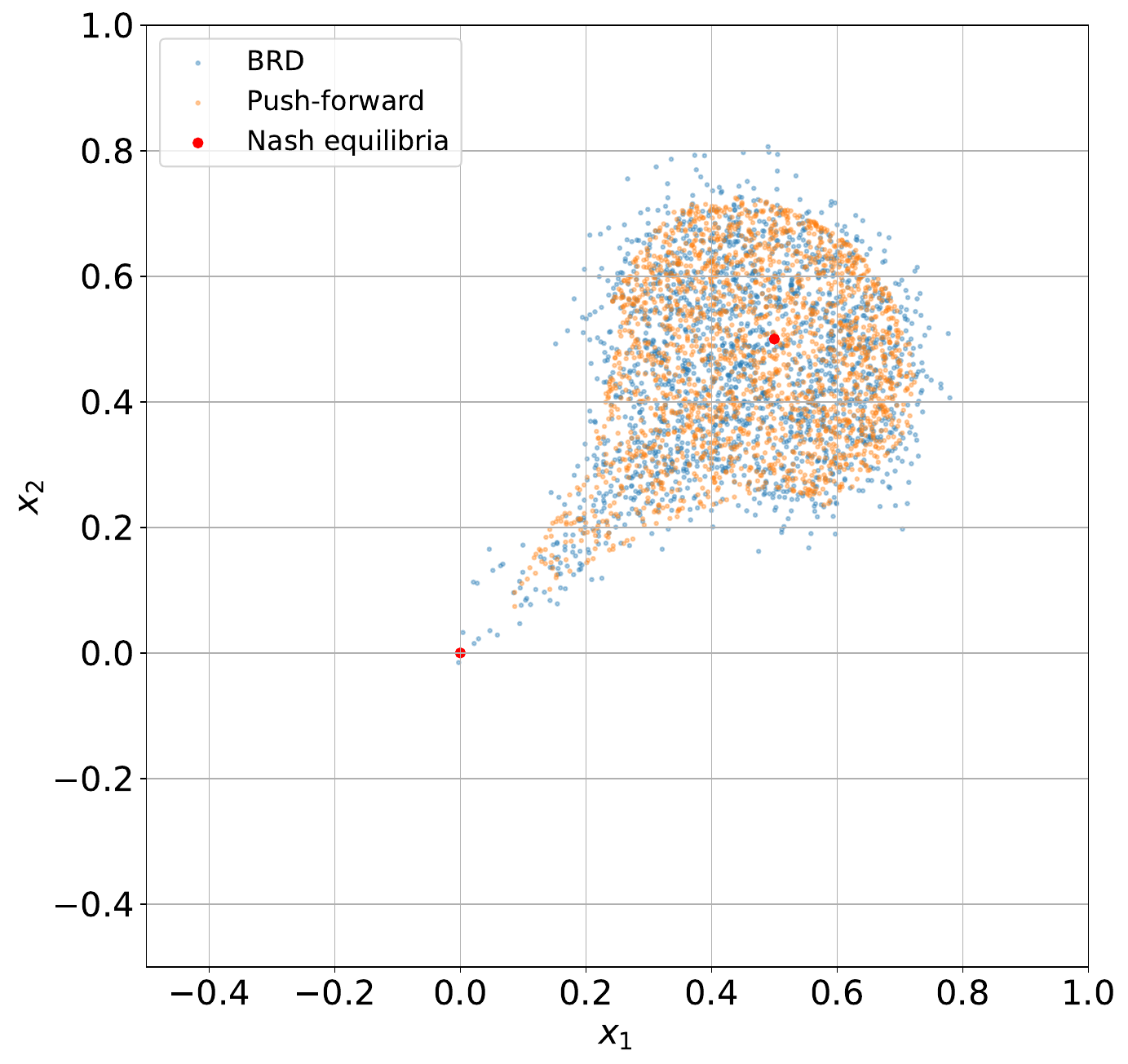}
        \caption{$t=0.4$}
    \end{subfigure}
    ~ 
    \begin{subfigure}[t]{0.3\textwidth}
        \centering
        \includegraphics[width=\textwidth]{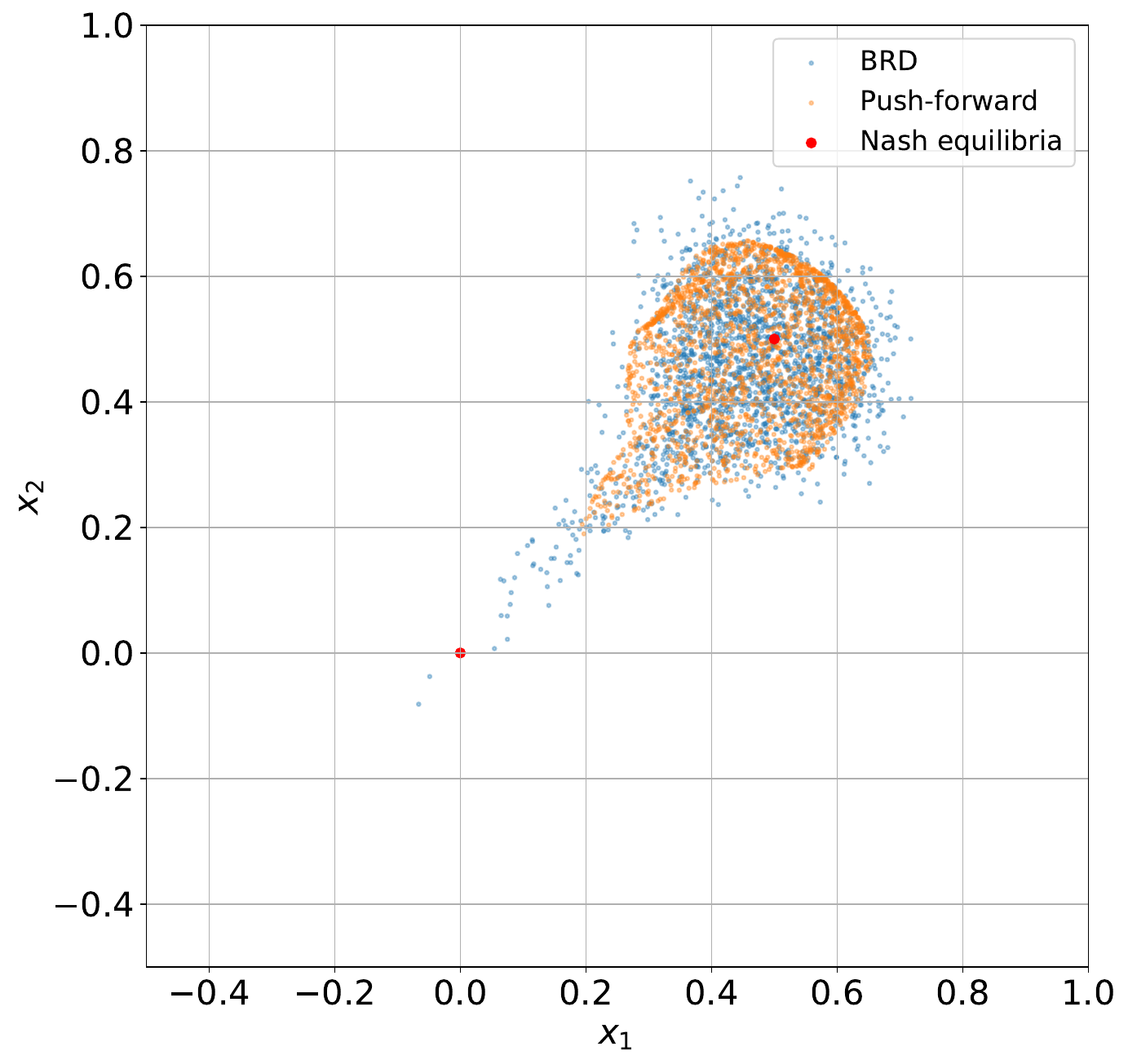}
        \caption{$t=0.6$}
    \end{subfigure}
    ~ 
    \begin{subfigure}[t]{0.3\textwidth}
        \centering
        \includegraphics[width=\textwidth]{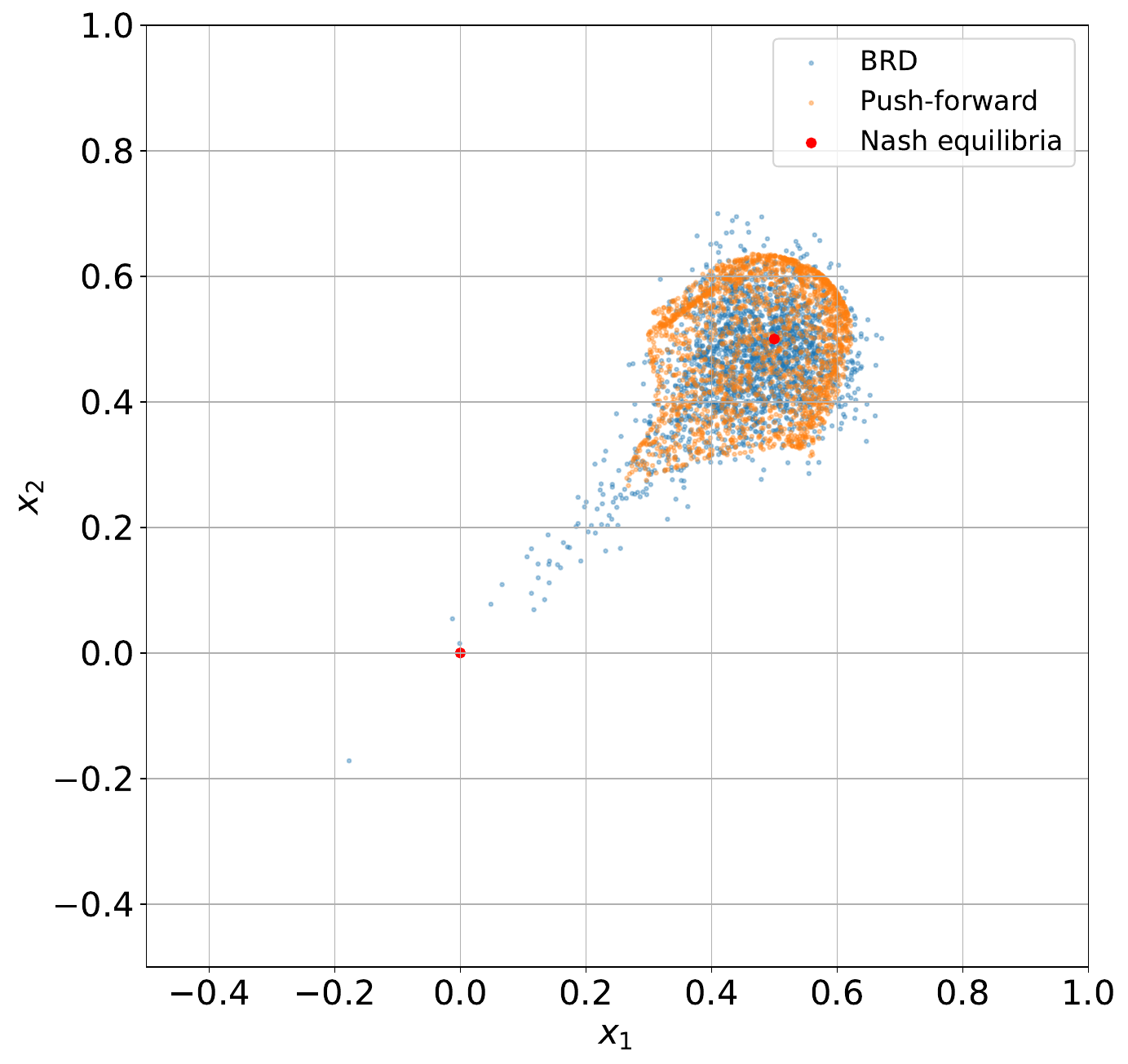}
        \caption{$t=0.8$}
    \end{subfigure}
    ~ 
    \begin{subfigure}[t]{0.3\textwidth}
        \centering
        \includegraphics[width=\textwidth]{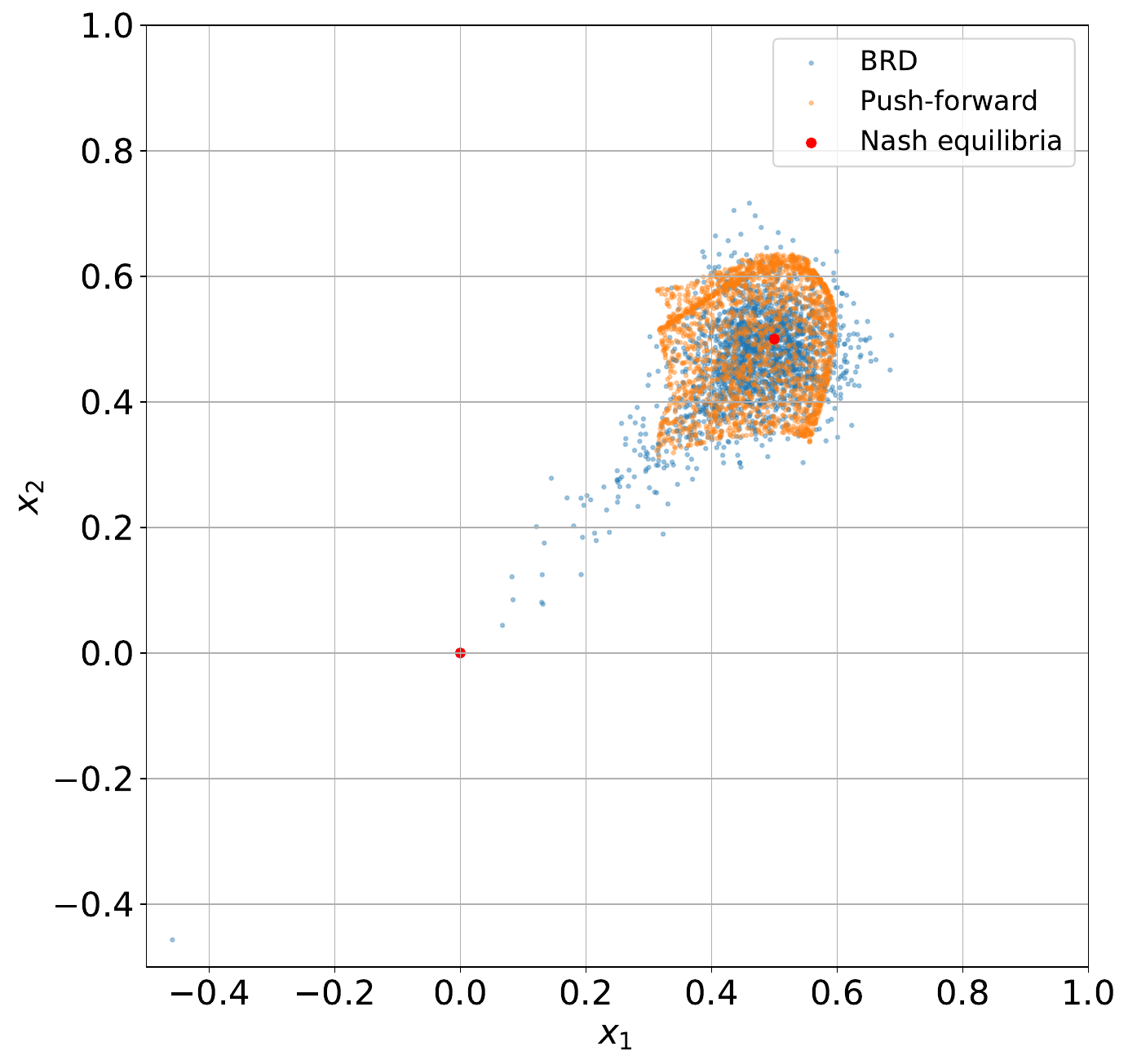}
        \caption{$t=1$}
    \end{subfigure}
    \caption{Plots of $1000$ sample points of 2D non-potential game computed with best response dynamics and pushforward map from $t=0$ to $t=1$. The initial samples are drawn from a uniform distribution in $[0,1]^2$. The coefficient for diffusion term is $\sigma_1 = \sigma_2 = 0.1$. The blue points are computed with the best response dynamics, the orange points are computed with pushforward maps, and the red points mark the known Nash equilibria $(0,0)$ and $(0.5, 0.5)$.}
    \label{fig: 2d_nonpotential_combined}
\end{figure}

As shown in the plots, the behavior of the strategies are well captured by the pushforward maps. With time going by, the strategies converge to the known stable Nash equilibrium $(0.5, 0.5)$. 

Meanwhile, we also test the performance of the pushforward maps with a different initial distribution. Suppose the initial samples are drawn from a Gaussian distribution $\mathcal{N}(\mu, \sigma I)$ where $\mu = (0.5, 0.5)^T$ and $\sigma = 0.03$. The parameters of the pushforward map are obtained by training with uniform initial samples. The following figures show the samples brought by the pushforward maps at different times. The results are shown in \cref{fig: 2d_nonpotential_pushforward_Gaussian}.


\begin{figure}[H]
    \centering
    \begin{subfigure}[t]{0.3\textwidth}
        \centering
        \includegraphics[width=\textwidth]{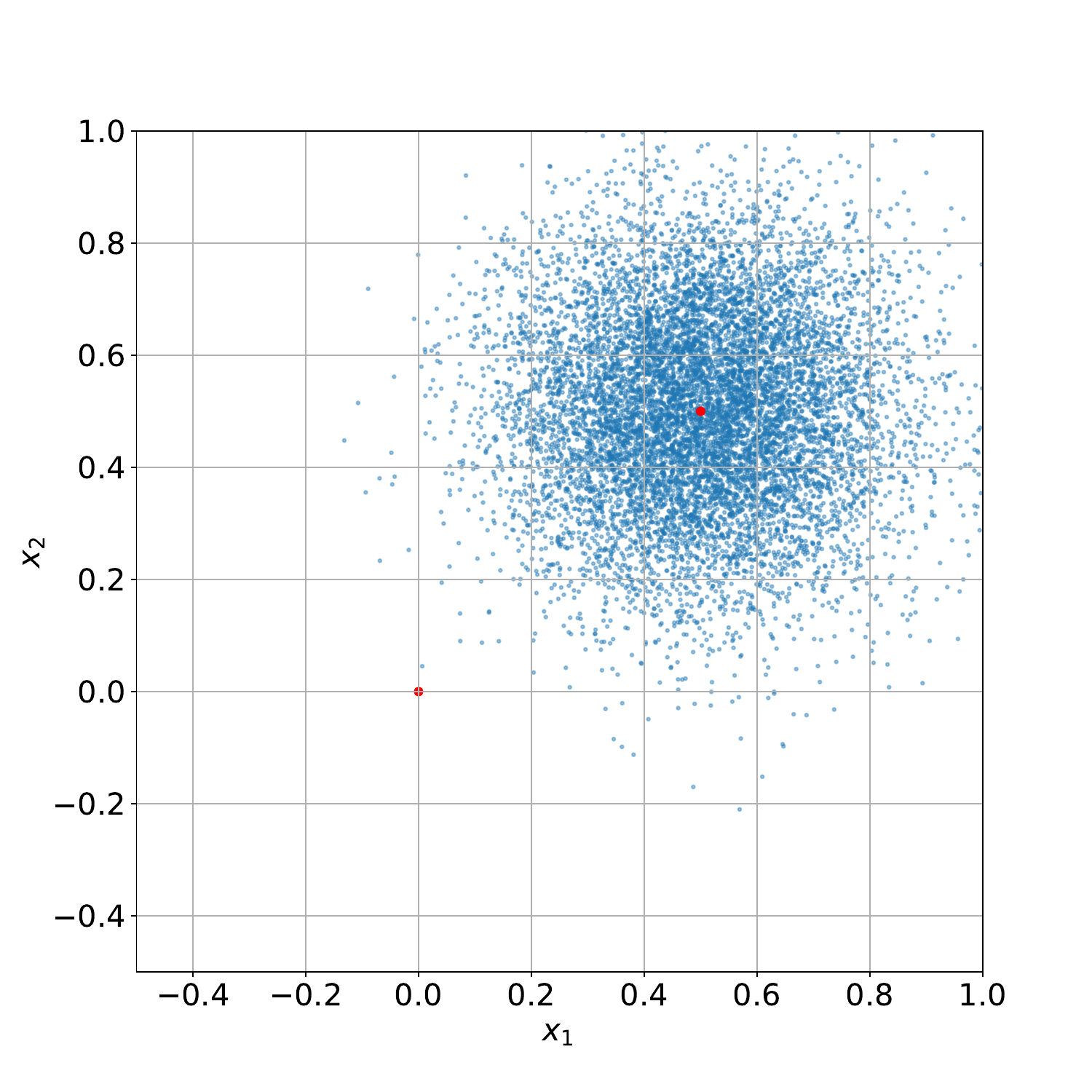}
        \caption{$t=0$}
    \end{subfigure}
    ~ 
    \begin{subfigure}[t]{0.3\textwidth}
        \centering
        \includegraphics[width=\textwidth]{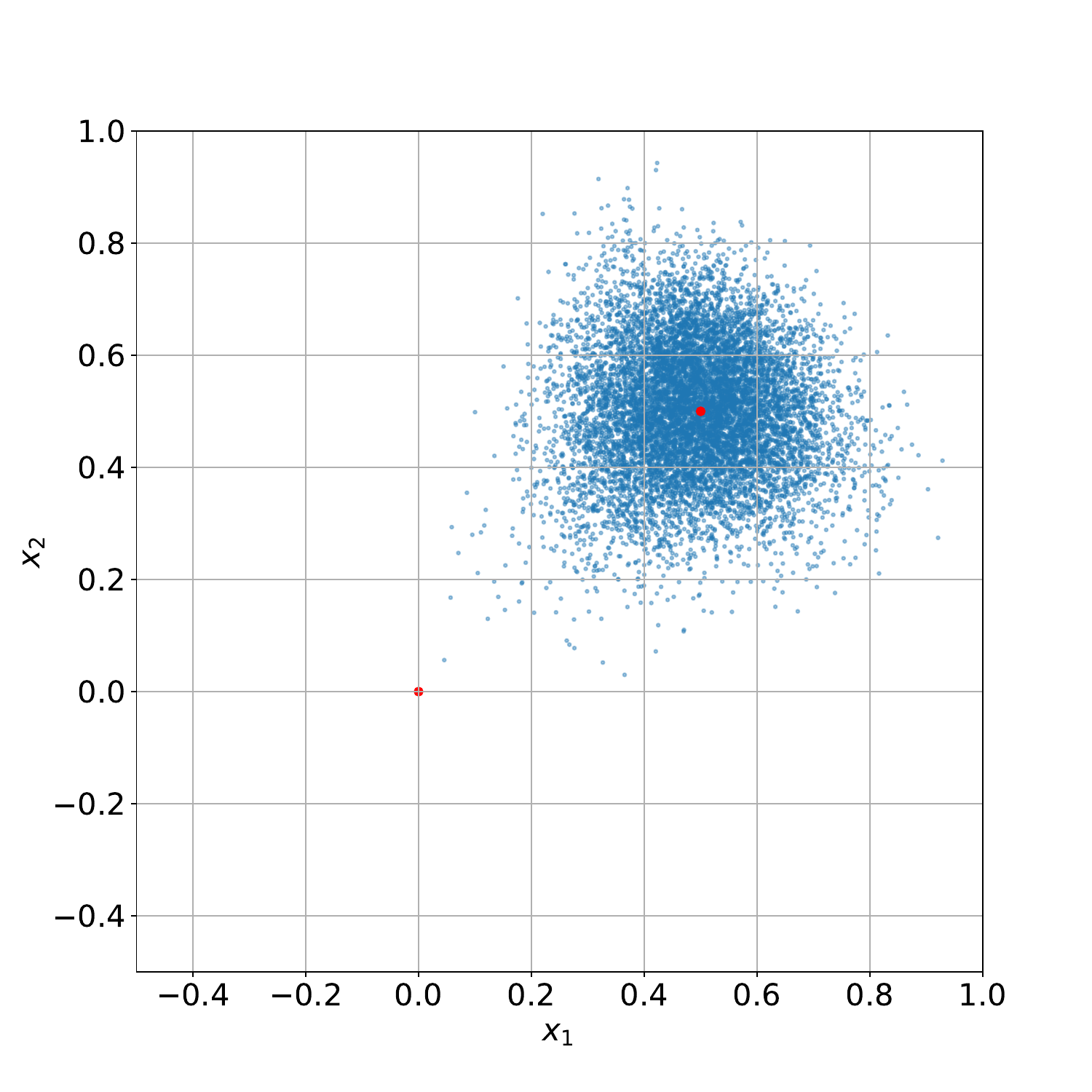}
        \caption{$t=0.2$}
    \end{subfigure}
    ~ 
    \begin{subfigure}[t]{0.3\textwidth}
        \centering
        \includegraphics[width=\textwidth]{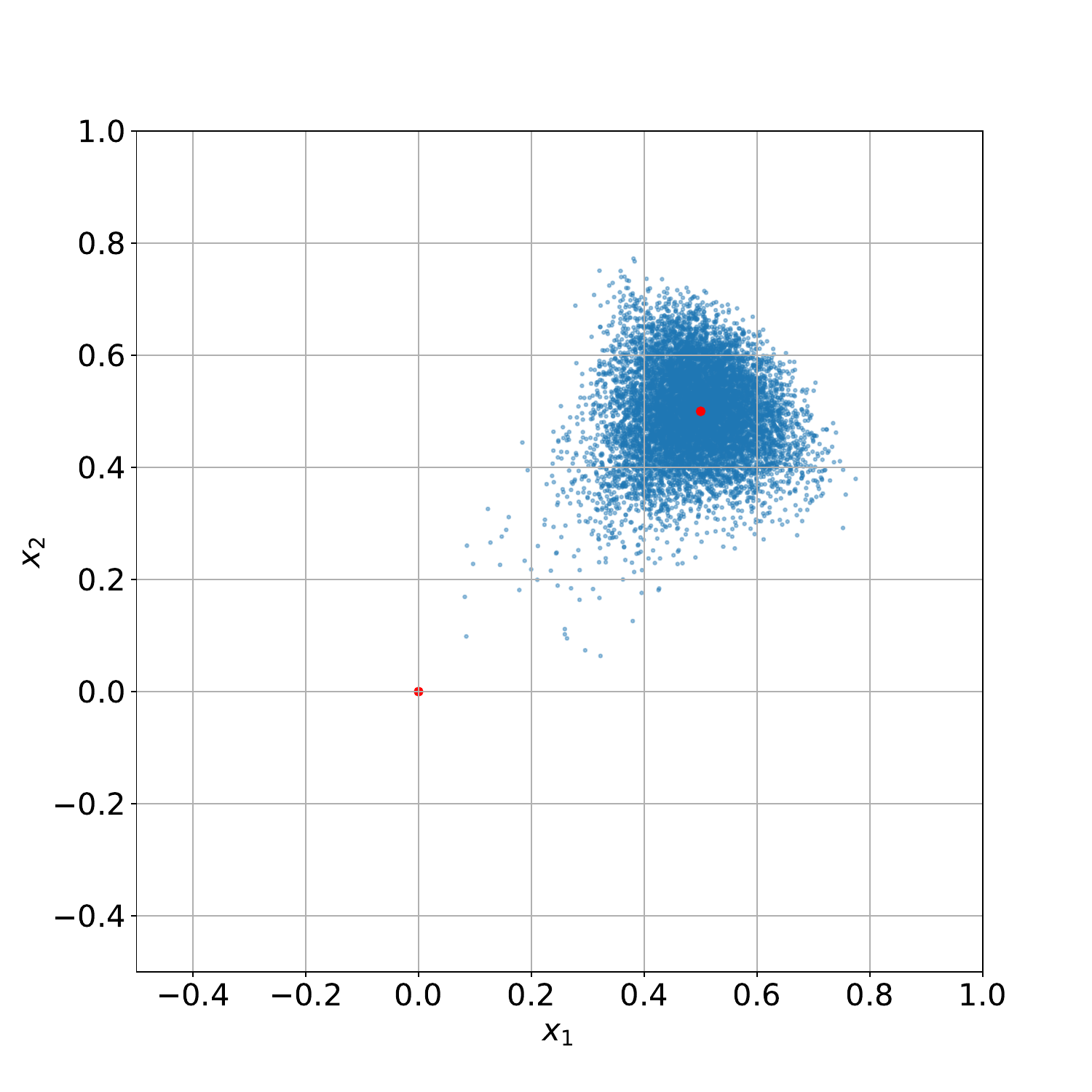}
        \caption{$t=0.4$}
    \end{subfigure}
    ~ 
    \begin{subfigure}[t]{0.3\textwidth}
        \centering
        \includegraphics[width=\textwidth]{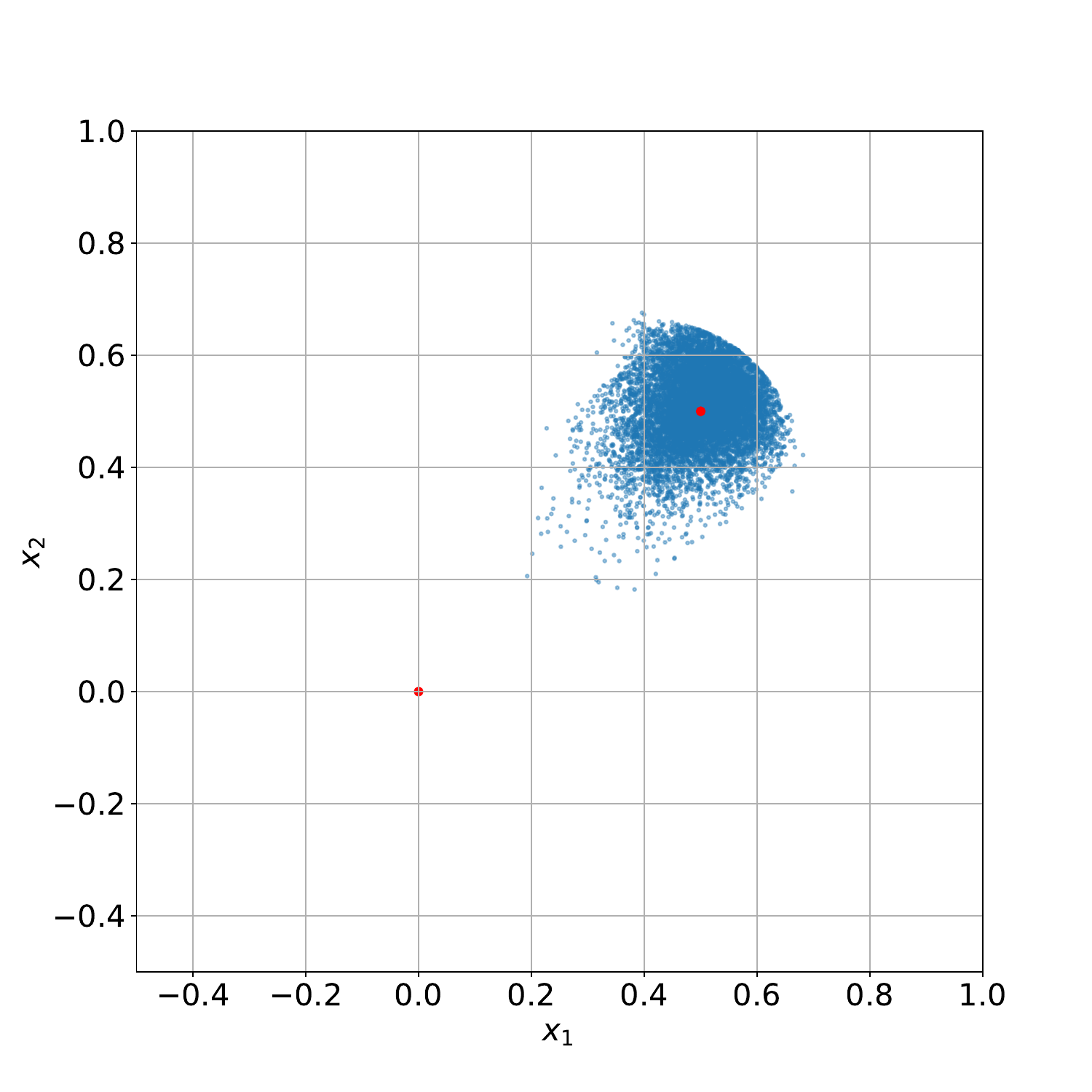}
        \caption{$t=0.6$}
    \end{subfigure}
    ~ 
    \begin{subfigure}[t]{0.3\textwidth}
        \centering
        \includegraphics[width=\textwidth]{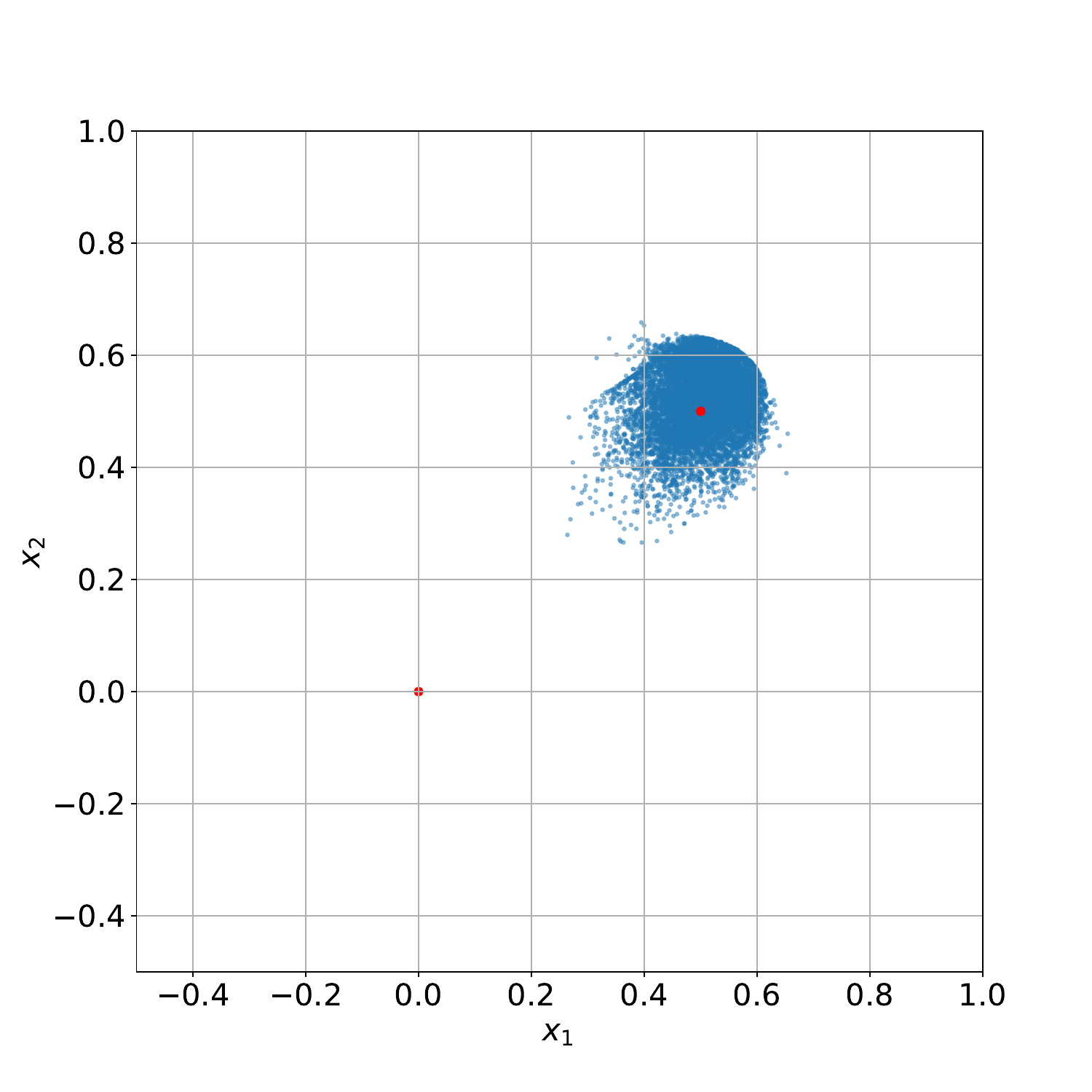}
        \caption{$t=0.8$}
    \end{subfigure}
    ~ 
    \begin{subfigure}[t]{0.3\textwidth}
        \centering
        \includegraphics[width=\textwidth]{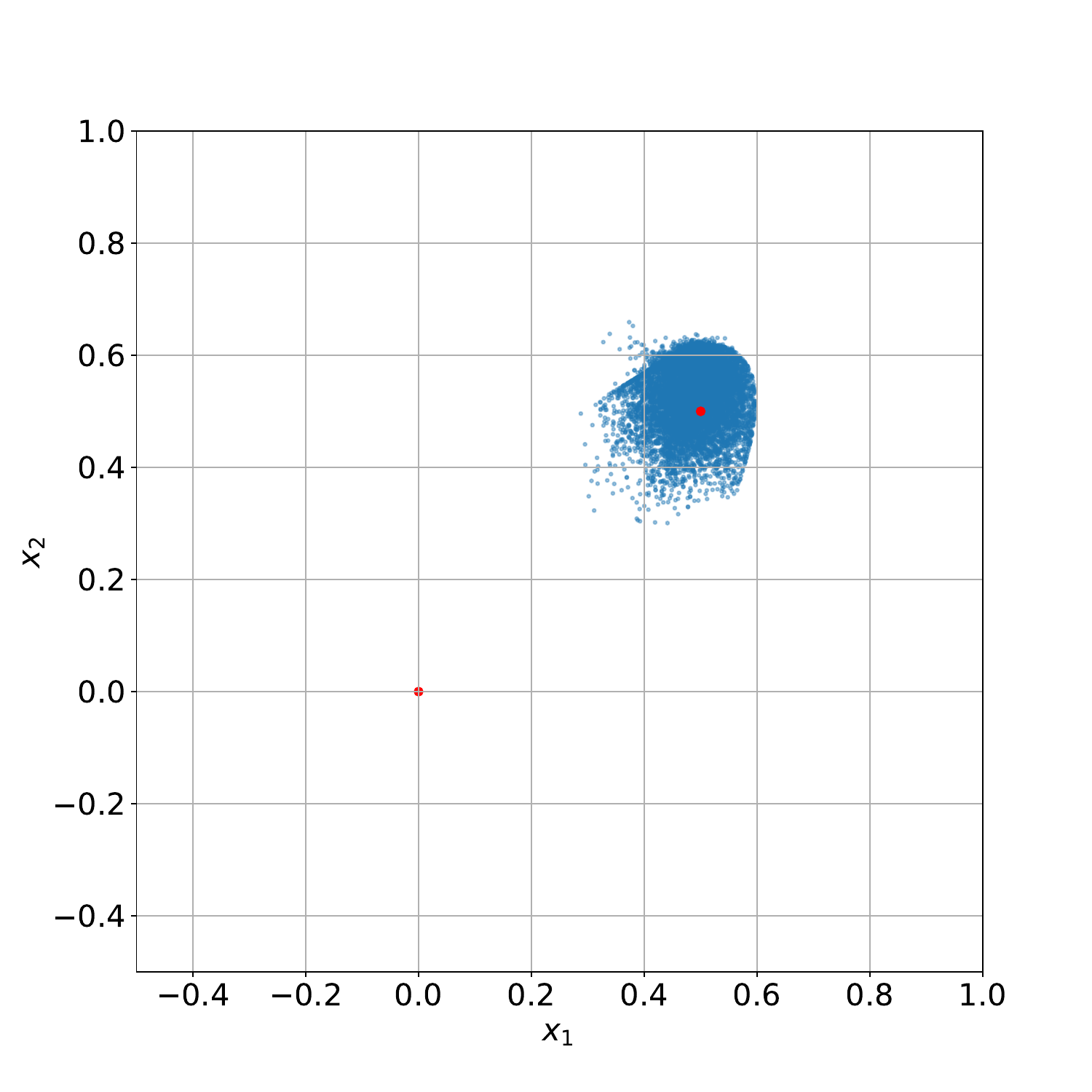}
        \caption{$t=1$}
    \end{subfigure}
    \caption{Plots of sample points of 2D non-potential game computed with pushforward map $T_{\theta(t)}$ from $t=0$ to $t=1$. The initial samples are drawn from a Gaussian distribution. The coefficient for diffusion term is $\sigma_1 = \sigma_2 = 0.1$. The red points mark the known Nash equilibria $(0,0)$ and $(0.5, 0.5)$.}
    \label{fig: 2d_nonpotential_pushforward_Gaussian}
\end{figure}

We compare the result by directly computing the best response dynamics with Gaussian initial samples. The plots are shown below in \cref{fig: 2d_nonpotential_brd_Gaussian}.

\begin{figure}[H]
    \centering
    \begin{subfigure}[t]{0.3\textwidth}
        \centering
        \includegraphics[width=\textwidth]{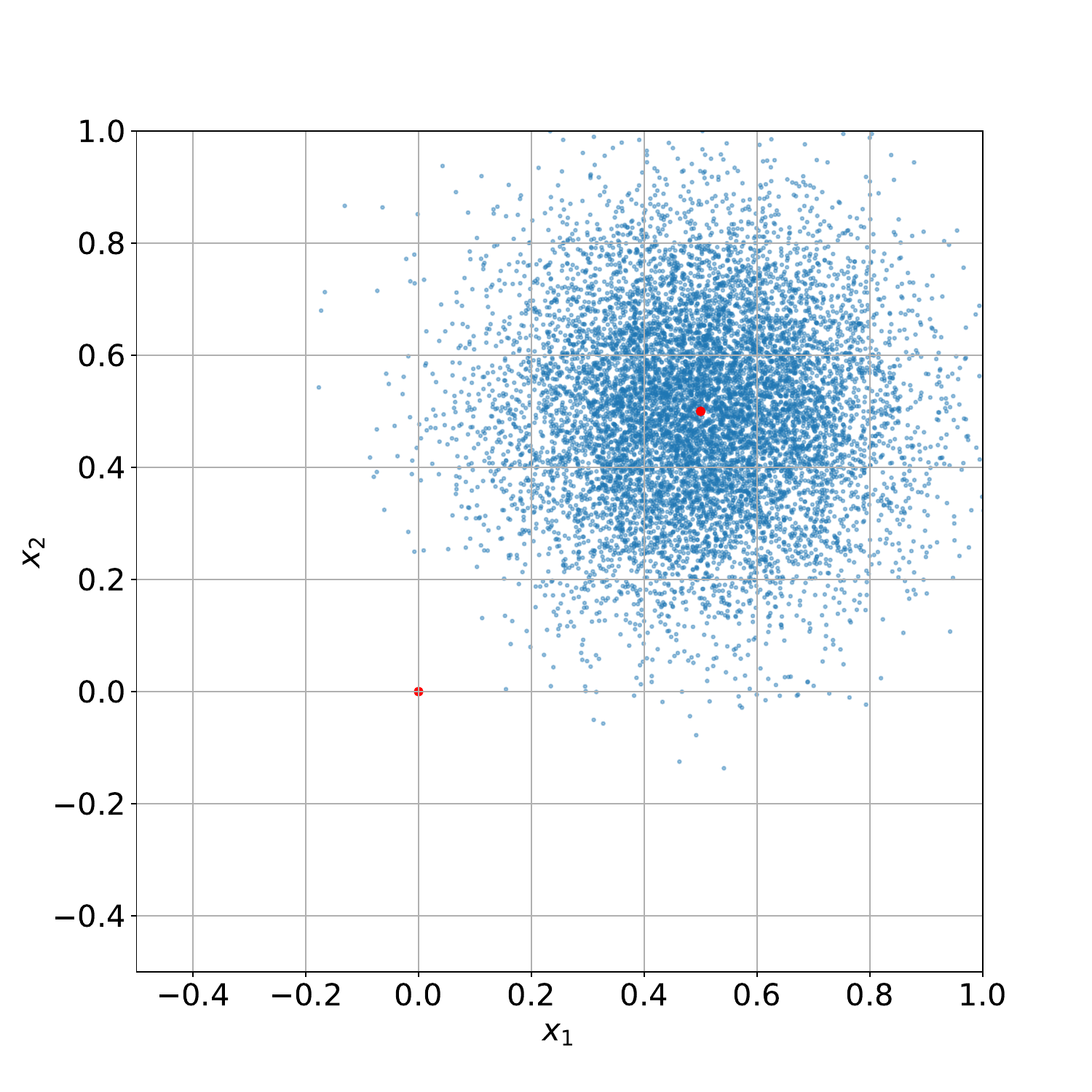}
        \caption{$t=0$}
    \end{subfigure}%
    ~ 
    \begin{subfigure}[t]{0.3\textwidth}
        \centering
        \includegraphics[width=\textwidth]{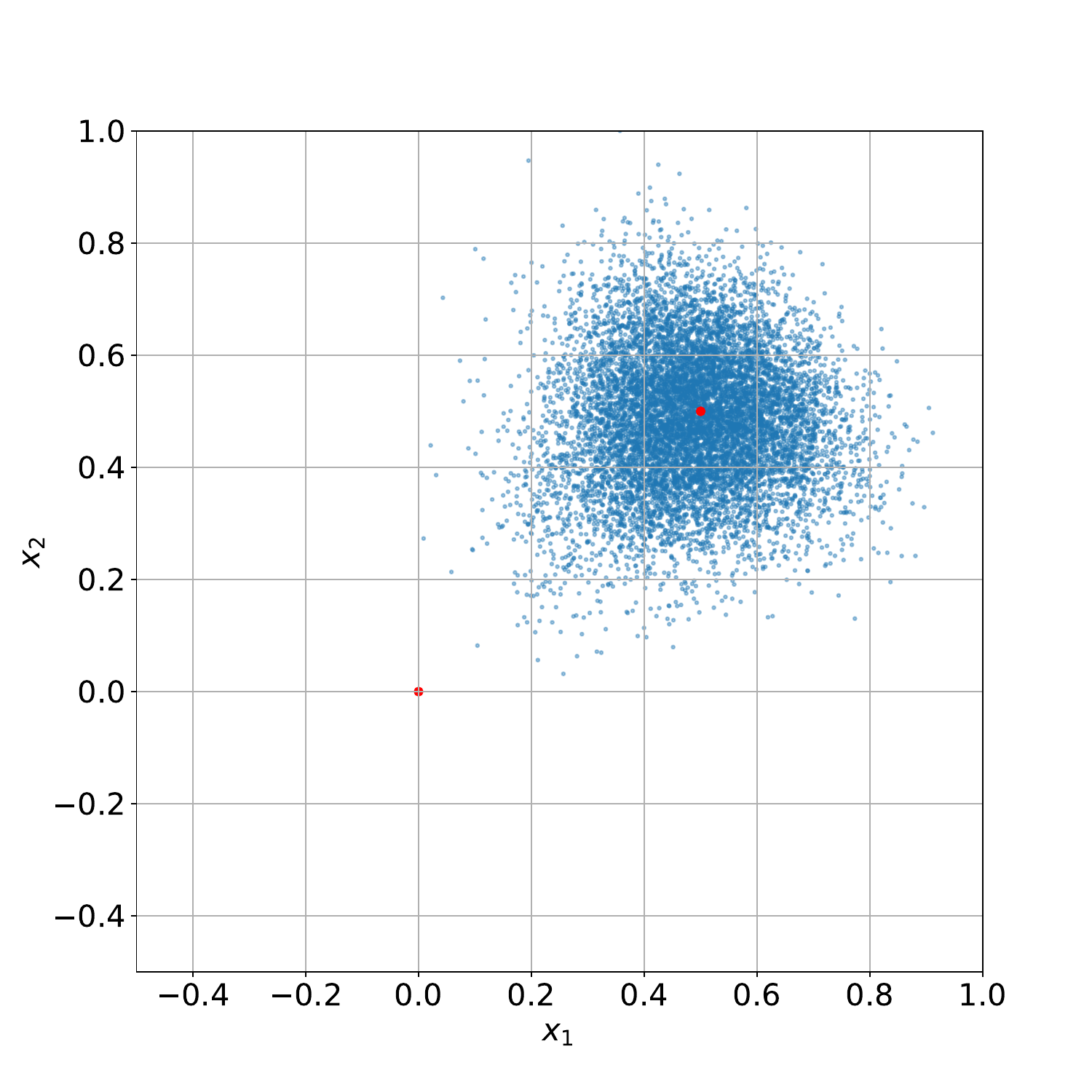}
        \caption{$t=0.2$}
    \end{subfigure}
    ~ 
    \begin{subfigure}[t]{0.3\textwidth}
        \centering
        \includegraphics[width=\textwidth]{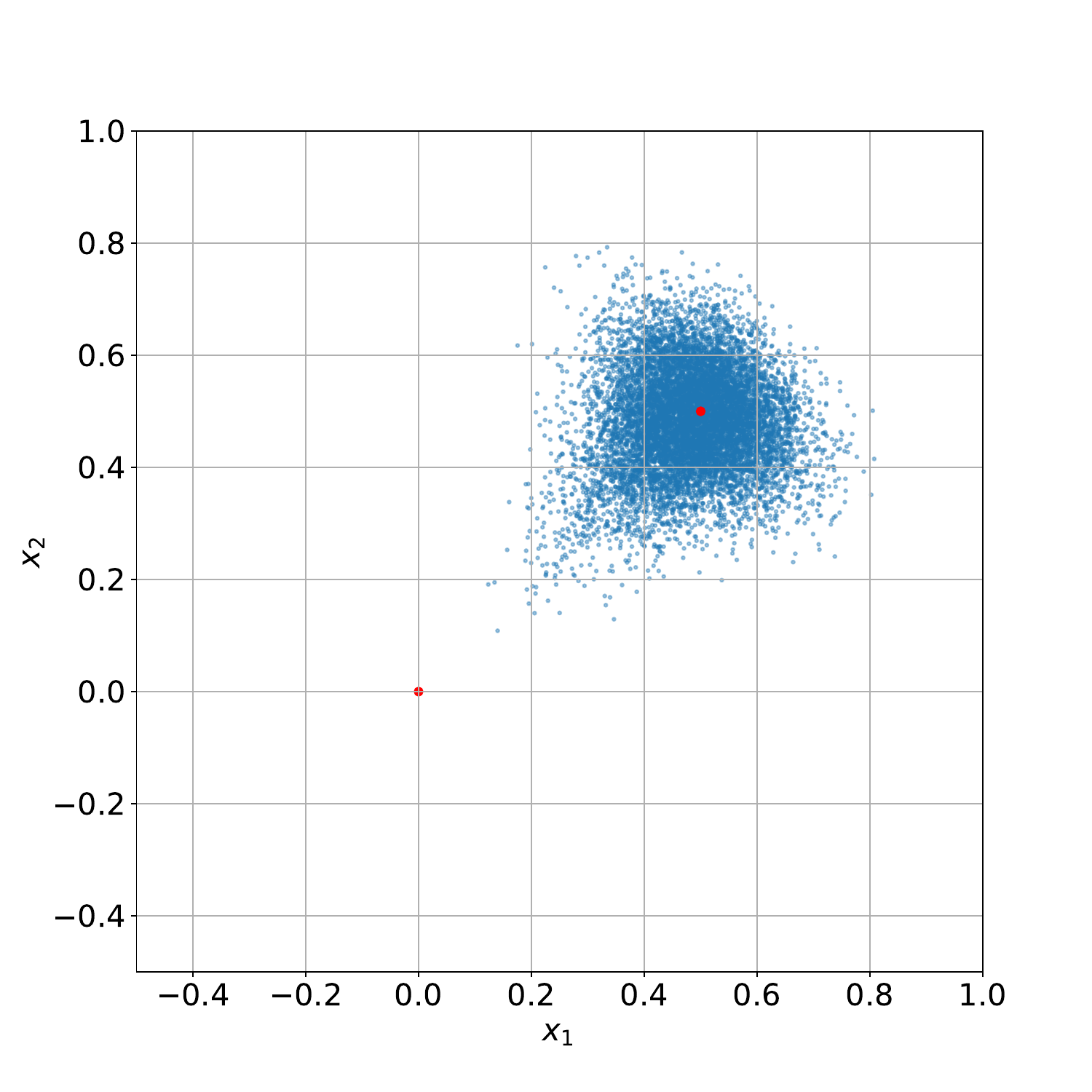}
        \caption{$t=0.4$}
    \end{subfigure}
    ~ 
    \begin{subfigure}[t]{0.3\textwidth}
        \centering
        \includegraphics[width=\textwidth]{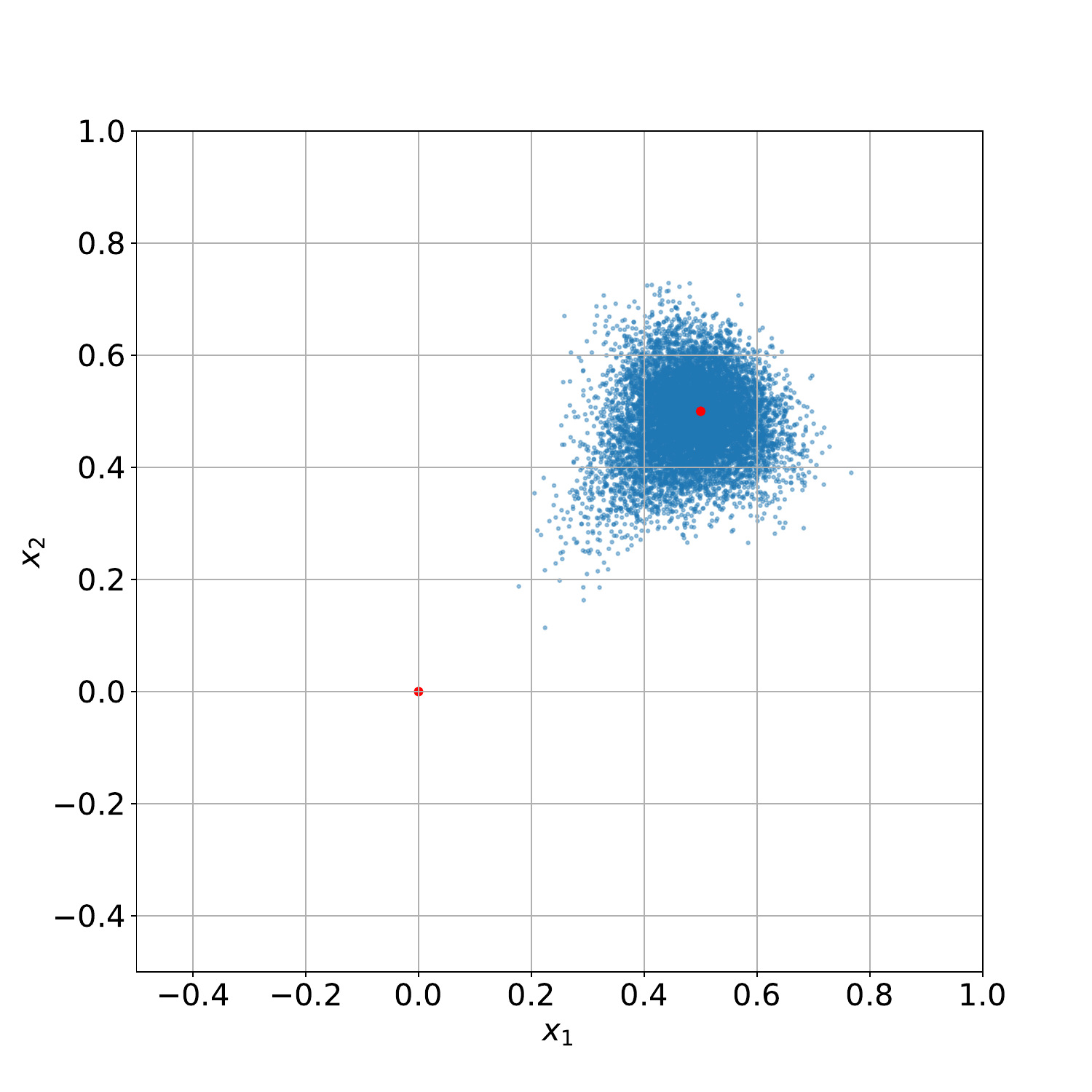}
        \caption{$t=0.6$}
    \end{subfigure}
    ~ 
    \begin{subfigure}[t]{0.3\textwidth}
        \centering
        \includegraphics[width=\textwidth]{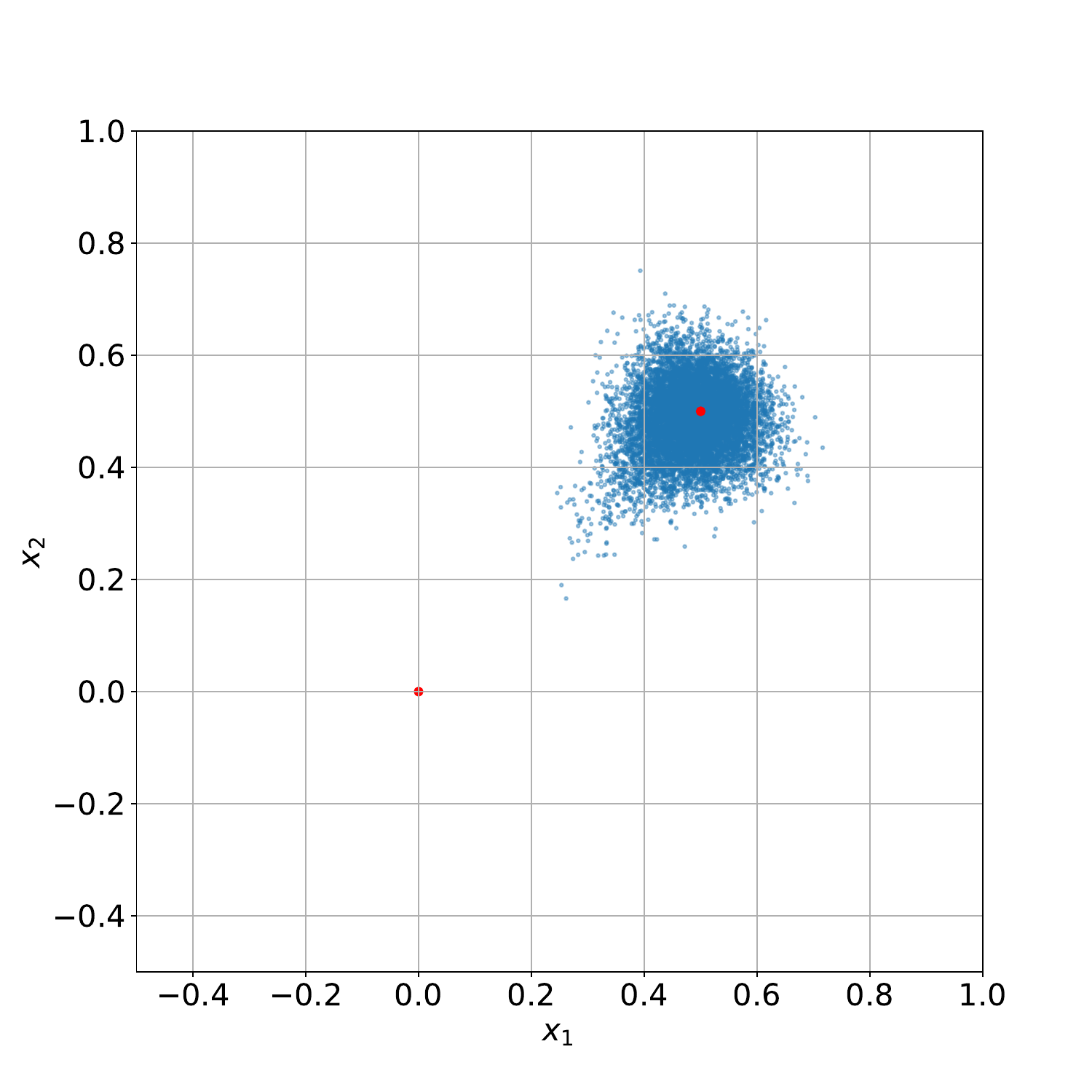}
        \caption{$t=0.8$}
    \end{subfigure}
    ~ 
    \begin{subfigure}[t]{0.3\textwidth}
        \centering
        \includegraphics[width=\textwidth]{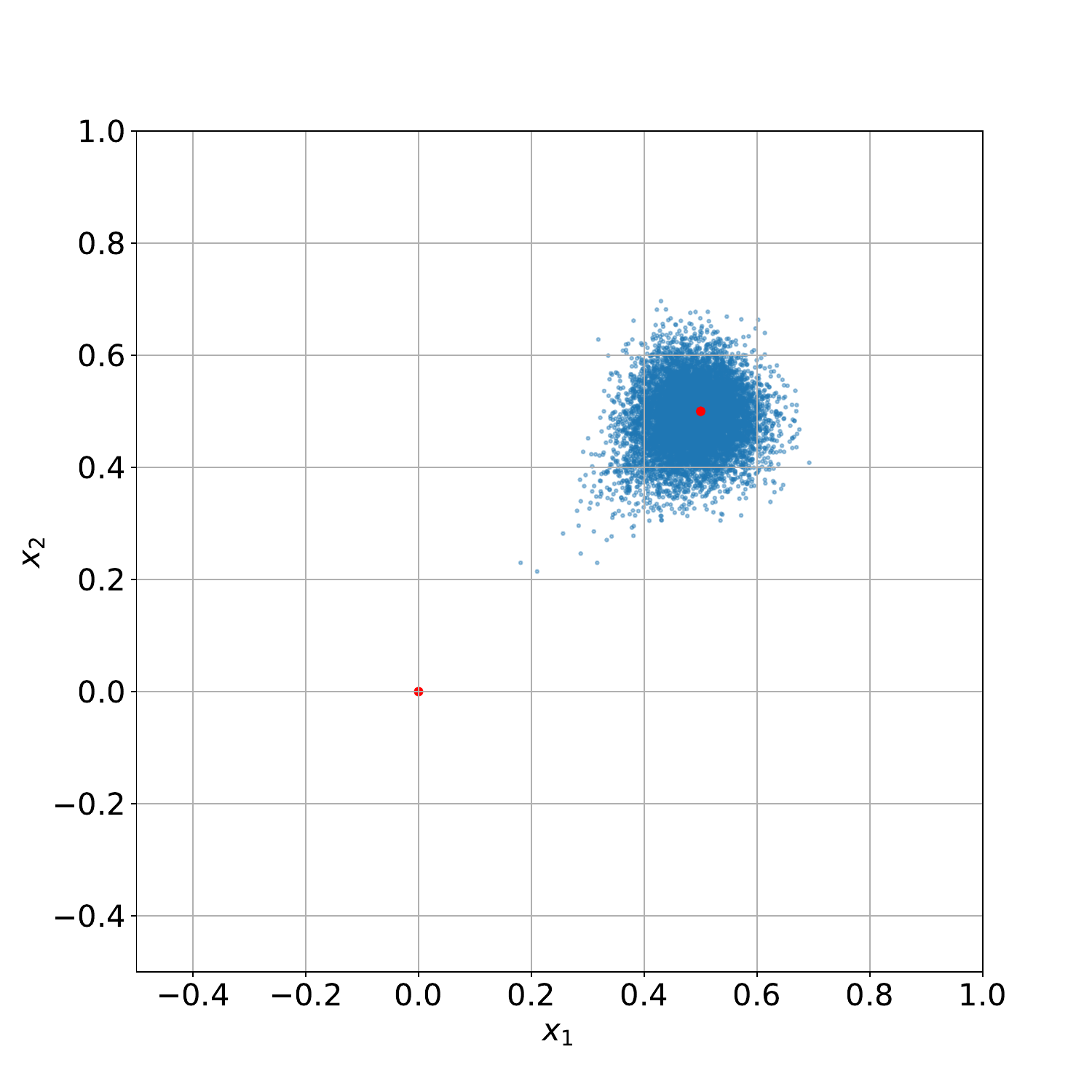}
        \caption{$t=1$}
    \end{subfigure}
    \caption{Plots of sample points of 2D non-potential game computed with best response dynamics from $t=0$ to $t=1$. The initial samples are drawn from a Gaussian distribution. The coefficient for diffusion term is $\sigma_1 = \sigma_2 = 0.1$. The red points mark the known Nash equilibria $(0,0)$ and $(0.5, 0.5)$.}
    \label{fig: 2d_nonpotential_brd_Gaussian}
\end{figure}

\subsection{High-dimensional Non-potential Game}

Cournot game can be generalized to model a competition among $d$ players. Denote the quantities produced by each firm by $x_i, i \in [d]$. The total quantity available in the market is 
\begin{align}
    Q = \sum_{i=1}^d x_i.
\end{align}
Suppose the cost functions can be denoted by 
\begin{align}
    c_i(x_i, x_{-i}) = 2b \mu x_i \left( \sum_{j=1, j \neq i}^d x_j \right)^2,
\end{align}
the payoff functions $\Pi_i(x_i, x_{-i})$ for each firm $i$ is 
\begin{align}
    \Pi_i(x_i, x_{-i}) & = p(x_i, x_{-i}) x_i - c_i(x_i, x_{-i}) \\
    & = a - b \left( \sum_{j=1}^d x_j \right) - 2b \mu x_i \left( \sum_{j=1, j \neq i}^d x_j \right)^2
\end{align}
In this example, finding Nash equilibria for a general $d$ is challenging. Some equilibria can be identified by exploiting the symmetry of the payoff functions, but it remains uncertain whether additional equilibria exist.

\subsubsection{Cournot Game with Three Players}
Consider a Cournot game with $3$ players. The quantities produced by the firms are denoted by $x_1, x_2$ and $x_3$. The price of the homogenenous product is determined by the inverse demand function 
\begin{align}
    p(x_1, x_2, x_3) = a - b (x_1 + x_2 + x_3).
\end{align}
The payoff functions for each player $x_i$ is 
\begin{align}
    \Pi_i (x_i, x_{-i}) = \left[ a - b \left( x_1 + x_2 + x_3 \right)\right] x_i - c_i(x_i, x_{-i})
\end{align}
where the cost function
\begin{align}
    c_i(x_i, x_{-i}) = d + a x_i - b x_i (1+2\mu) \left( x_1 + x_2 + x_3 \right) + 2b \mu x_i (x_1 + x_2 + x_3)^2, \quad i \in [3]
\end{align}
In this example, the parameters are set to be $a = 3, b = 1, d = 1$ and $\mu = 2$.
In general, computing Nash equilibria in multiplayer games is a difficult task. In this example, we leveraged the symmetry of the payoff functions to identify four equilibria. It is possible that additional equilibria exist, but our stability analysis was limited to those currently known. By symmetry of the payoff functions, we obtain $5$ Nash equilibria, $(0, 0, 0)$, $\left( \frac{3}{8}, \frac{3}{8}, \frac{3}{8} \right)$, $\left( \frac{1}{2}, \frac{1}{2}, 0 \right)$, $\left( \frac{1}{2}, 0, \frac{1}{2} \right)$, $\left( 0, \frac{1}{2}, \frac{1}{2} \right)$. The origin is unstable and the remaining known Nash equilibria are asymptotically stable.

Below in \cref{fig: 3d_nonpotential_brd_uniform} and in \cref{fig: 3d_nonpotential_pushforward_uniform}, we show the sample plots computed with the best response dynamics and pushforward map, respectively. The computational results with pushforward map are consistent with those obtained from best response dynamics.

\begin{figure}[!th]
    \centering
    \begin{subfigure}[t]{0.4\textwidth}
        \centering
        \includegraphics[width=\textwidth]{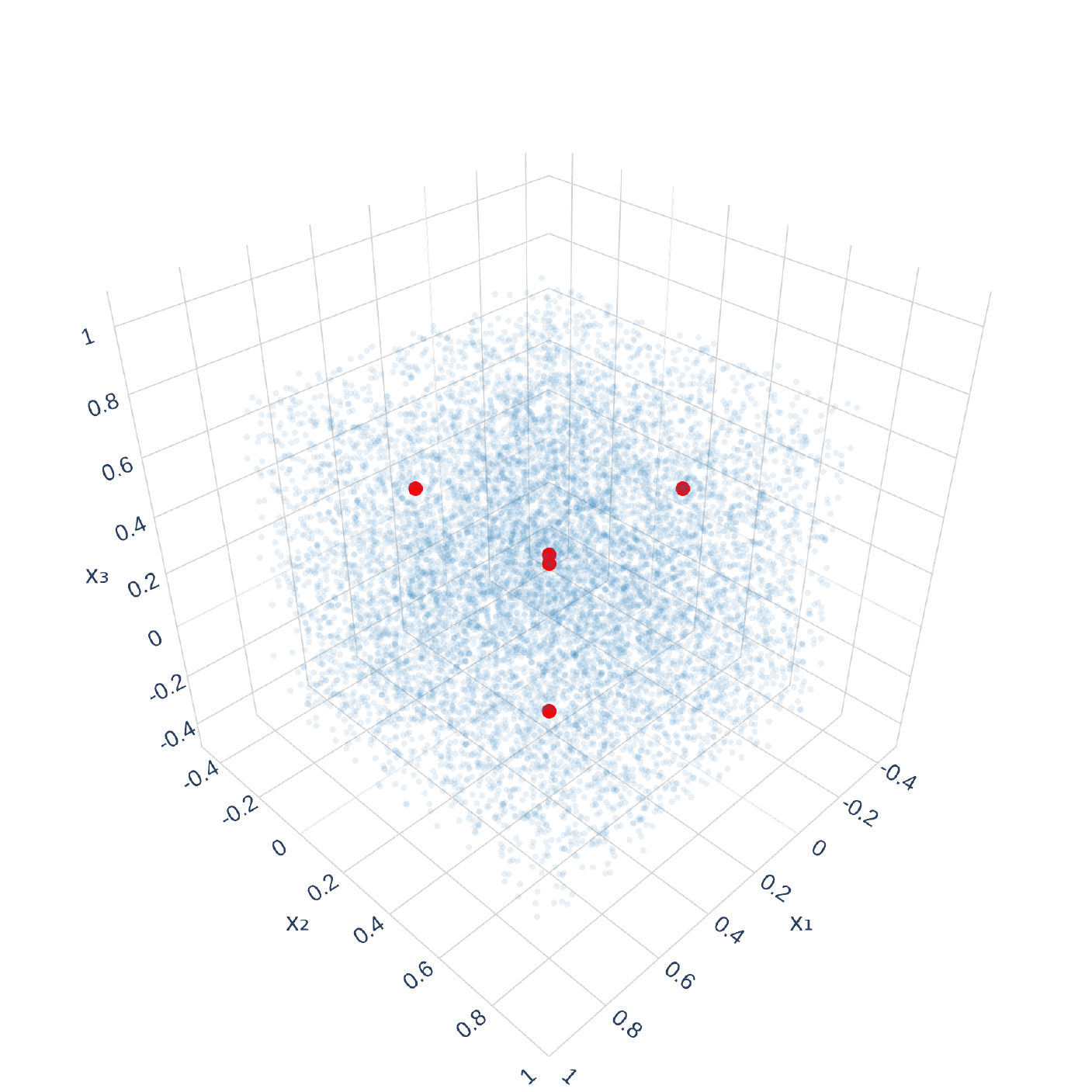}
        \caption{$t=0$}
    \end{subfigure}%
    ~ 
    \begin{subfigure}[t]{0.4\textwidth}
        \centering
        \includegraphics[width=\textwidth]{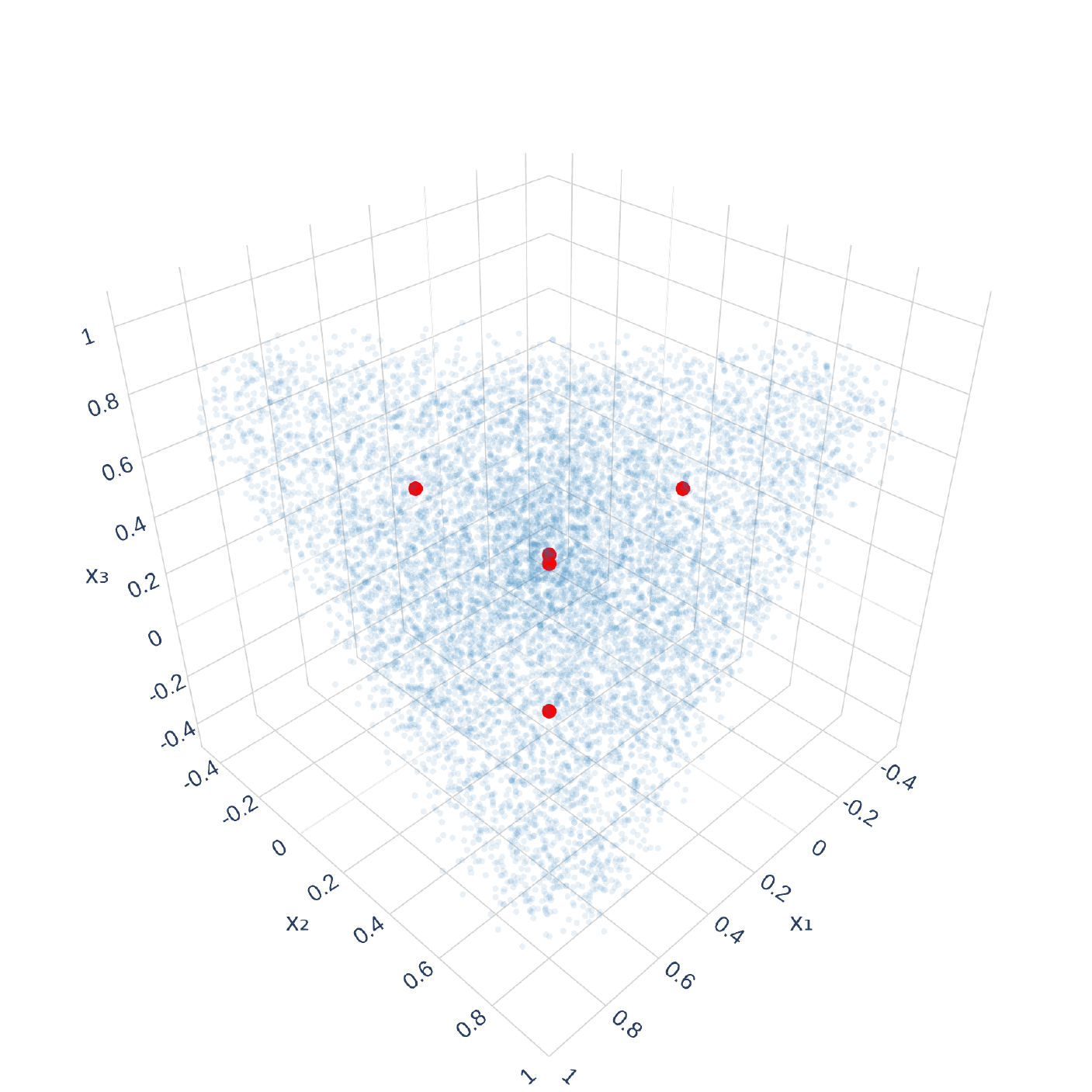}
        \caption{$t=0.2$}
    \end{subfigure}
    ~ 
    \begin{subfigure}[t]{0.4\textwidth}
        \centering
        \includegraphics[width=\textwidth]{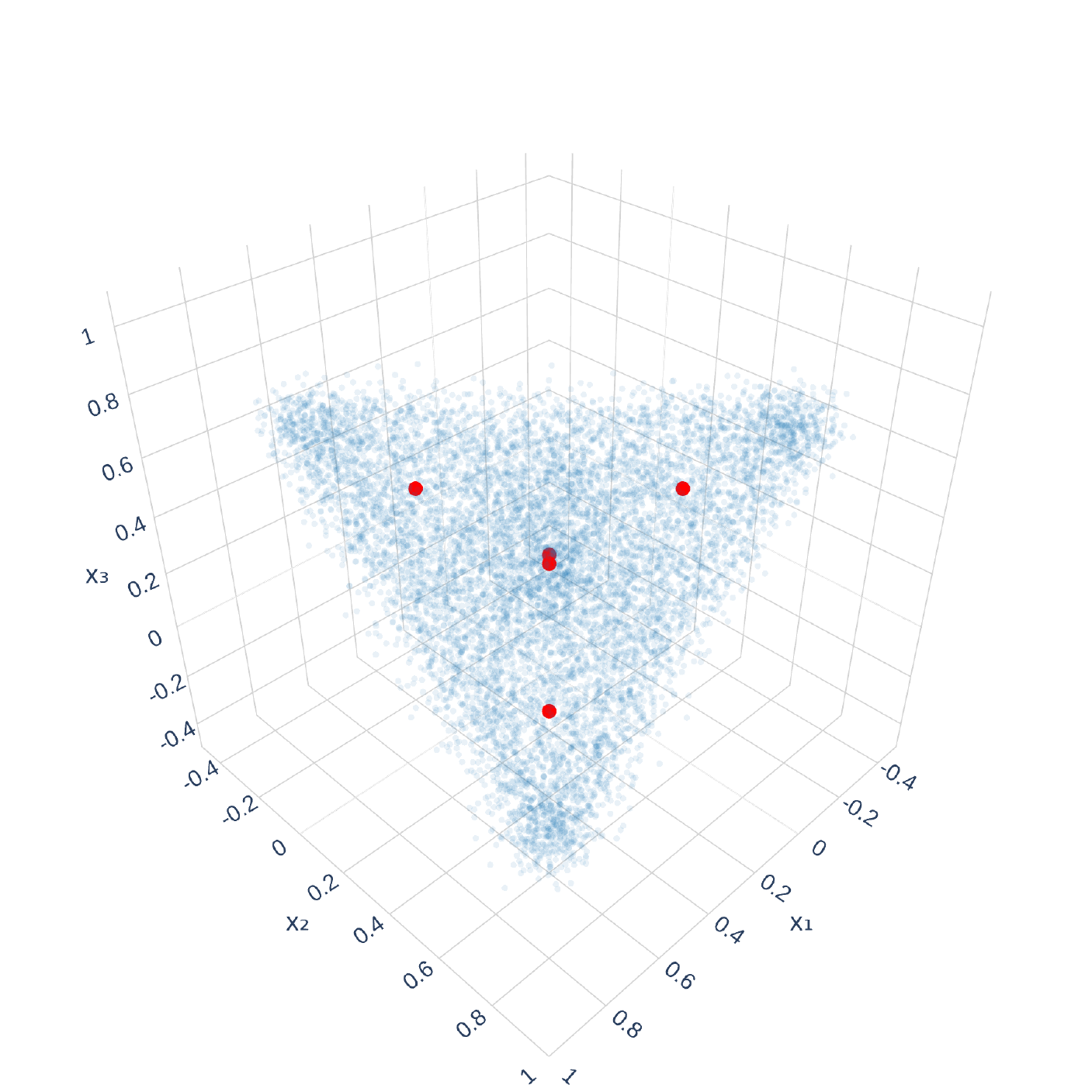}
        \caption{$t=0.4$}
    \end{subfigure}%
    ~ 
    \begin{subfigure}[t]{0.4\textwidth}
        \centering
        \includegraphics[width=\textwidth]{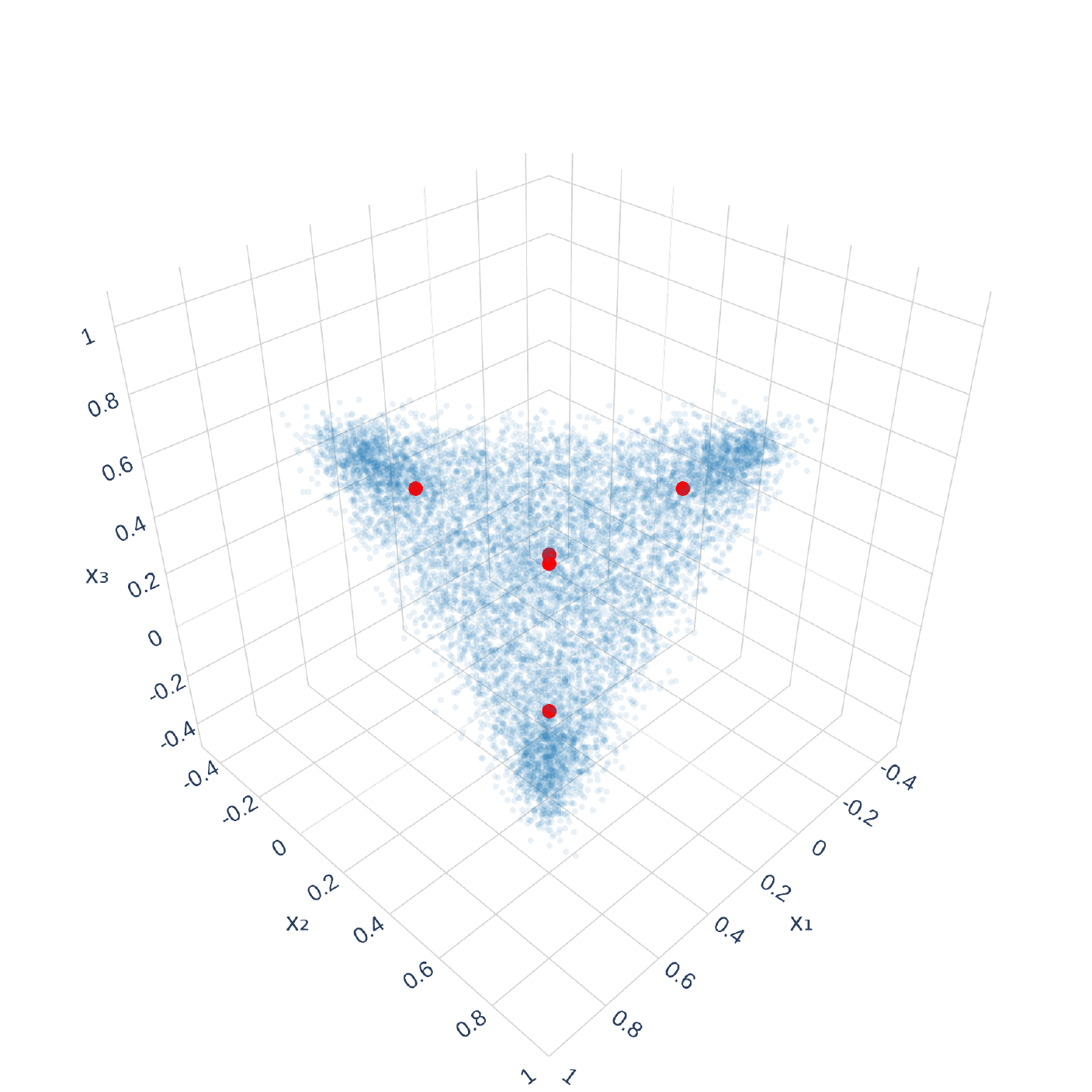}
        \caption{$t=0.6$}
    \end{subfigure}
    ~ 
    \begin{subfigure}[t]{0.4\textwidth}
        \centering
        \includegraphics[width=\textwidth]{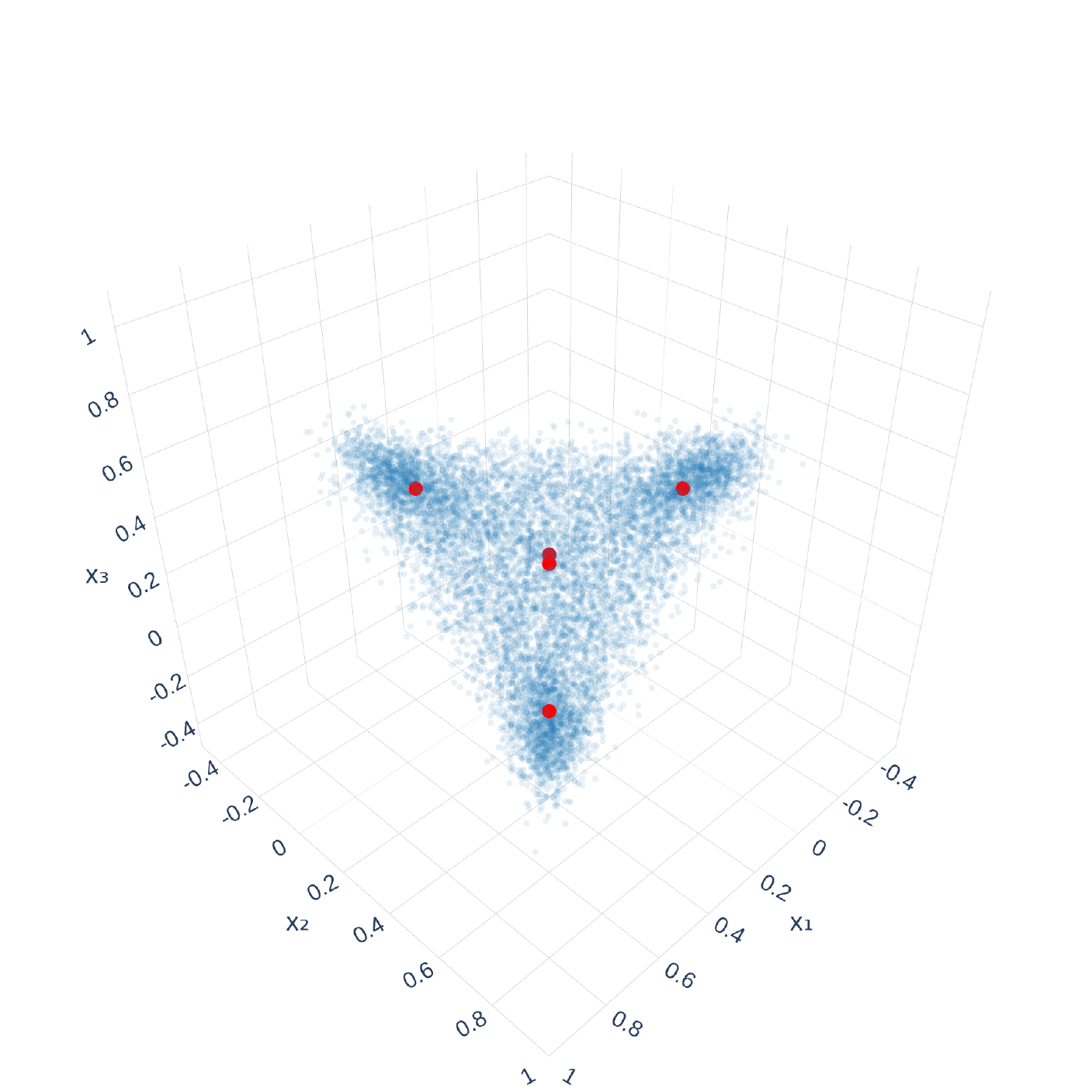}
        \caption{$t=0.8$}
    \end{subfigure}%
    ~ 
    \begin{subfigure}[t]{0.4\textwidth}
        \centering
        \includegraphics[width=\textwidth]{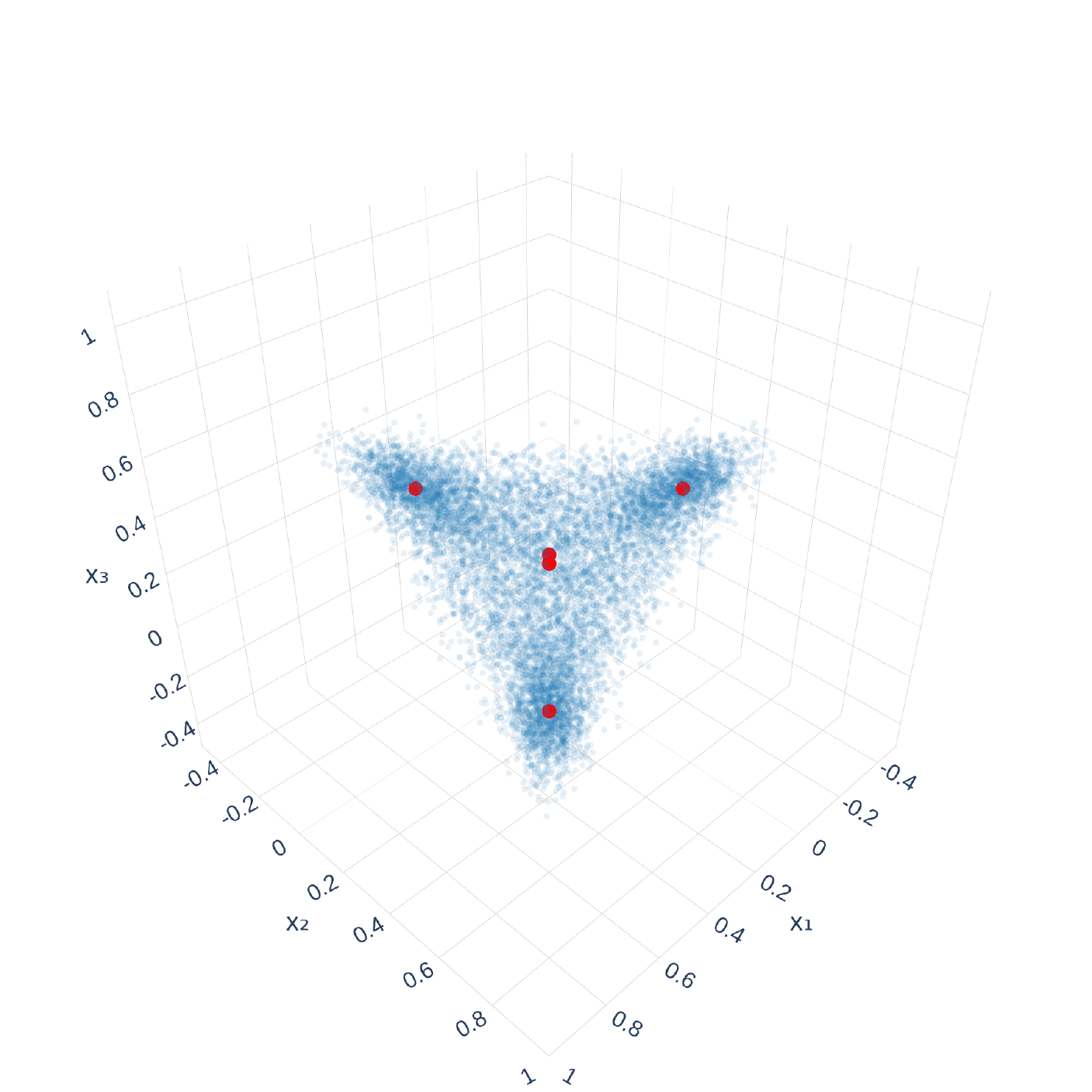}
        \caption{$t=1$}
    \end{subfigure}
    \caption{Plots of sample points of 3D non-potential game computed with best response dynamics from $t=0$ to $t=1$. The initial samples are drawn from a uniform distribution in $[0,1]^3$. The coefficient for diffusion term is $\sigma_1 = \sigma_2 = \sigma_3 = 0.1$. The red points mark the known Nash equilibria $(0,0,0)$, $\left( \frac{3}{8}, \frac{3}{8}, \frac{3}{8} \right)$, $(0.5, 0.5, 0)$, $(0.5, 0, 0.5)$ and $(0, 0.5, 0.5)$.}
    \label{fig: 3d_nonpotential_brd_uniform}
\end{figure}

\begin{figure}[!th]
    \centering
    \vspace{-1em}
    \begin{subfigure}[t]{0.4\textwidth}
        \centering
        \includegraphics[width=\textwidth]{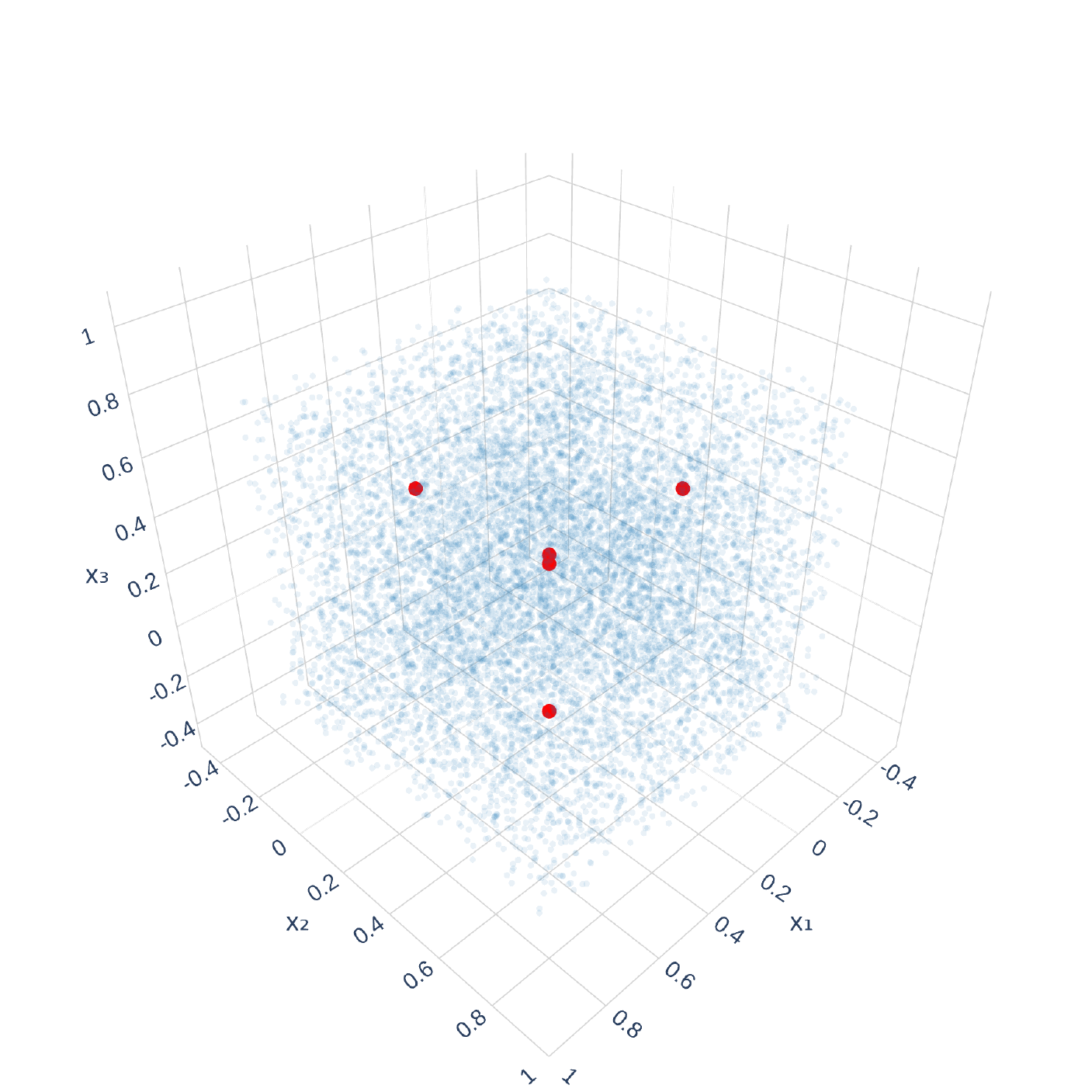}
        \caption{$t=0$}
    \end{subfigure}%
    \vspace{-0.3em}
    ~ 
    \begin{subfigure}[t]{0.4\textwidth}
        \centering
        \includegraphics[width=\textwidth]{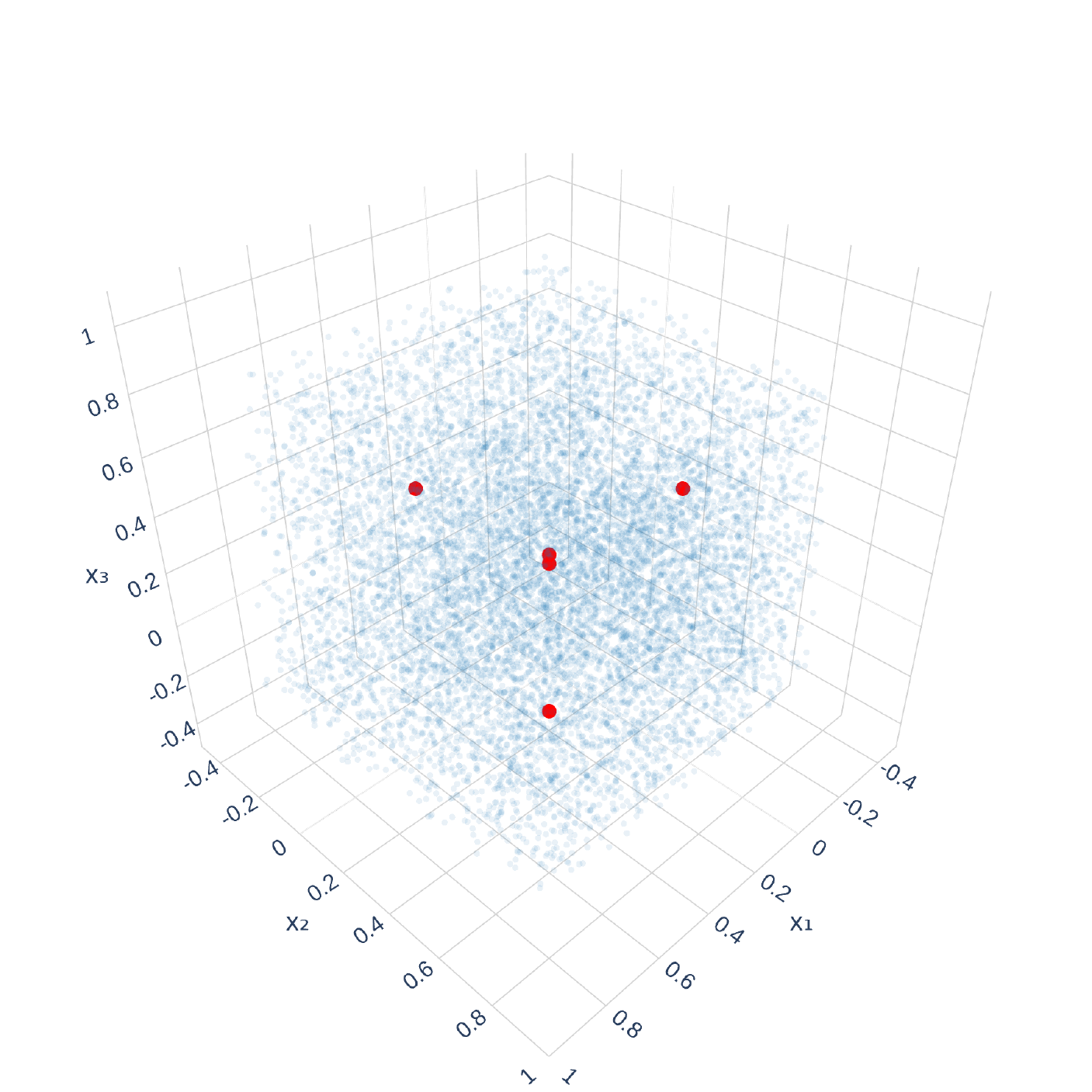}
        \caption{$t=0.2$}
    \end{subfigure}
    ~ 
    \begin{subfigure}[t]{0.4\textwidth}
        \centering
        \includegraphics[width=\textwidth]{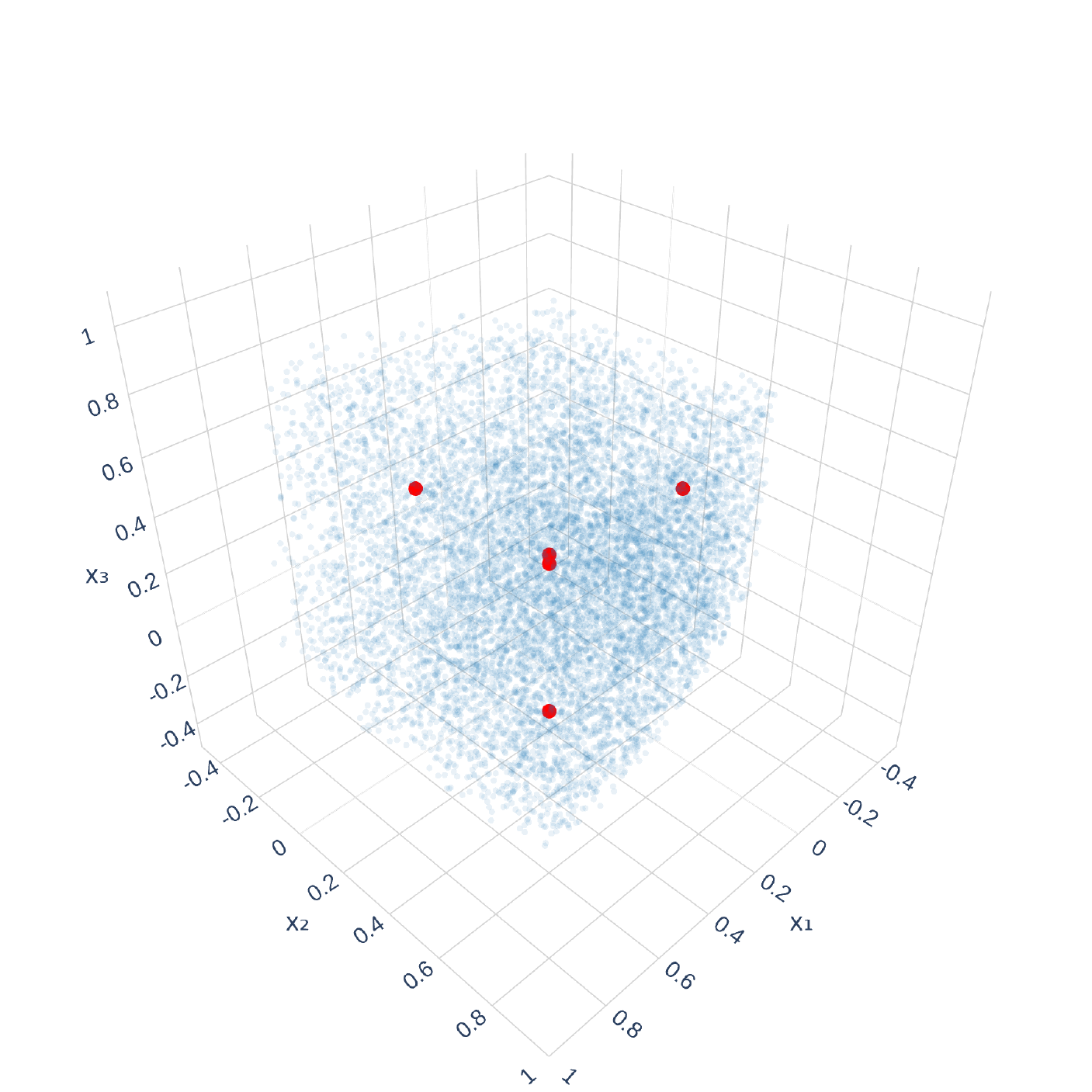}
        \caption{$t=0.4$}
    \end{subfigure}%
    \vspace{-0.3em}
    ~ 
    \begin{subfigure}[t]{0.4\textwidth}
        \centering
        \includegraphics[width=\textwidth]{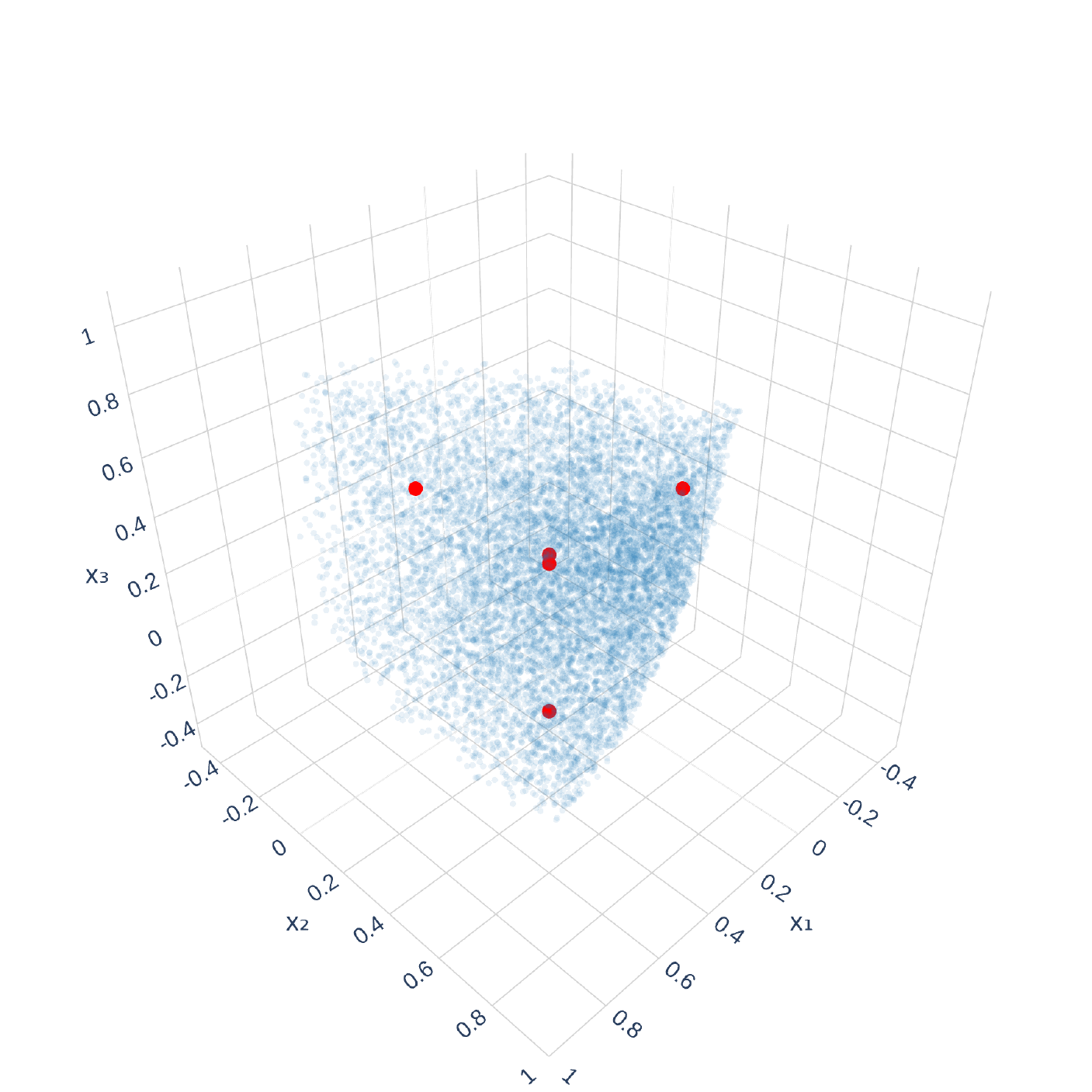}
        \caption{$t=0.6$}
    \end{subfigure}
    \vspace{-0.3em}
    ~ 
    \begin{subfigure}[t]{0.4\textwidth}
        \centering
        \includegraphics[width=\textwidth]{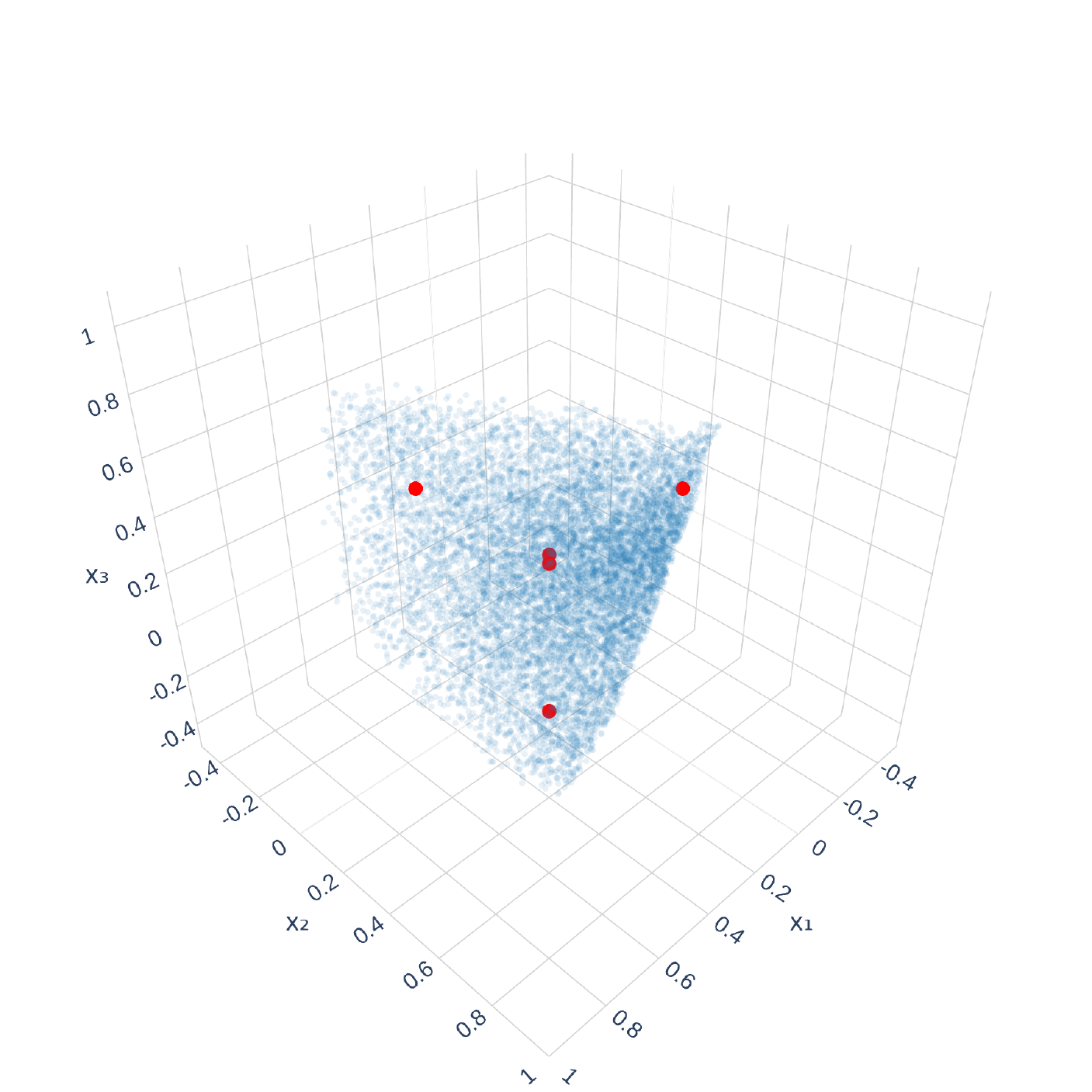}
        \caption{$t=0.8$}
    \end{subfigure}%
    \vspace{-0.3em}
    ~ 
    \begin{subfigure}[t]{0.4\textwidth}
        \centering
        \includegraphics[width=\textwidth]{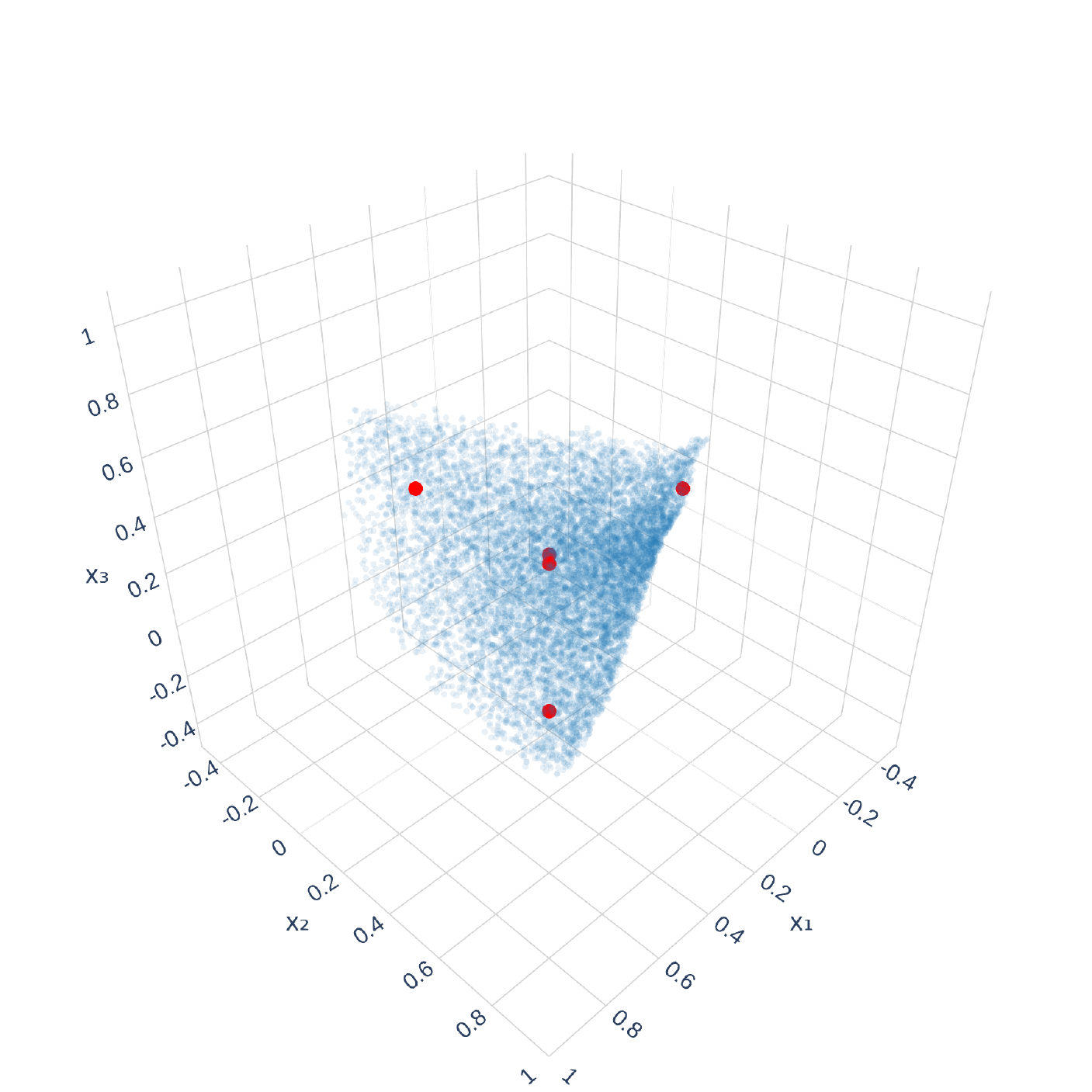}
        \caption{$t=1$}
    \end{subfigure}
    \caption{Plots of sample points of 3D non-potential game computed with pushforward map $T_{\theta(t)}$ from $t=0$ to $t=1$. The initial samples are drawn from a uniform distribution in $[0,1]^3$. The coefficient for diffusion term is $\sigma_1 = \sigma_2 = \sigma_3 = 0.1$. The red points mark the known Nash equilibria $(0,0,0)$, $\left( \frac{3}{8}, \frac{3}{8}, \frac{3}{8} \right)$, $(0.5, 0.5, 0)$, $(0.5, 0, 0.5)$ and $(0, 0.5, 0.5)$.}
    \label{fig: 3d_nonpotential_pushforward_uniform}
\end{figure}

\subsubsection{5D Non-potential Game}
In this subsection, we show an example of simulating the dynamics of a 5-dimensional Cournot game with the same parameters $a = 3$, $b = 1$, $d = 1$ and $\mu = 2$ to illustrate the application in solving high-dimensional games. As in the previous example, we exploit the symmetry of the payoff functions to identify seven Nash equilibria, $(0, 0, 0, 0, 0)$, $\left( \frac{7}{32}, \frac{7}{32}, \frac{7}{32}, \frac{7}{32}, \frac{7}{32} \right)$, $\left( 0, \frac{5}{18}, \frac{5}{18}, \frac{5}{18}, \frac{5}{18} \right)$, $\left( \frac{5}{18}, 0, \frac{5}{18}, \frac{5}{18}, \frac{5}{18} \right)$, $\left(\frac{5}{18}, \frac{5}{18}, 0, \frac{5}{18}, \frac{5}{18}  \right)$, $\left( \frac{5}{18}, \frac{5}{18}, \frac{5}{18}, 0, \frac{5}{18} \right)$, and $\left( \frac{5}{18}, \frac{5}{18}, \frac{5}{18}, \frac{5}{18}, 0 \right)$. By stability analysis, we know that none of the Nash equilibria is stable. While additional equilibria may exist, our stability analysis is limited to those currently identified. Below in \cref{fig: 5d_nonpotential_brd_uniform} and \cref{fig: 5d_nonpotential_pushforward_uniform}, we show the sample plots computed with best response dynamics and pushforward map, respectively. The results obtained from the pushforward map align with those from the best response dynamics. No evidence of stable Nash equilibria is observed in either computation.

\begin{figure}[!th]
    \centering
    \begin{subfigure}[t]{0.4\textwidth}
        \centering
        \includegraphics[width=\textwidth]{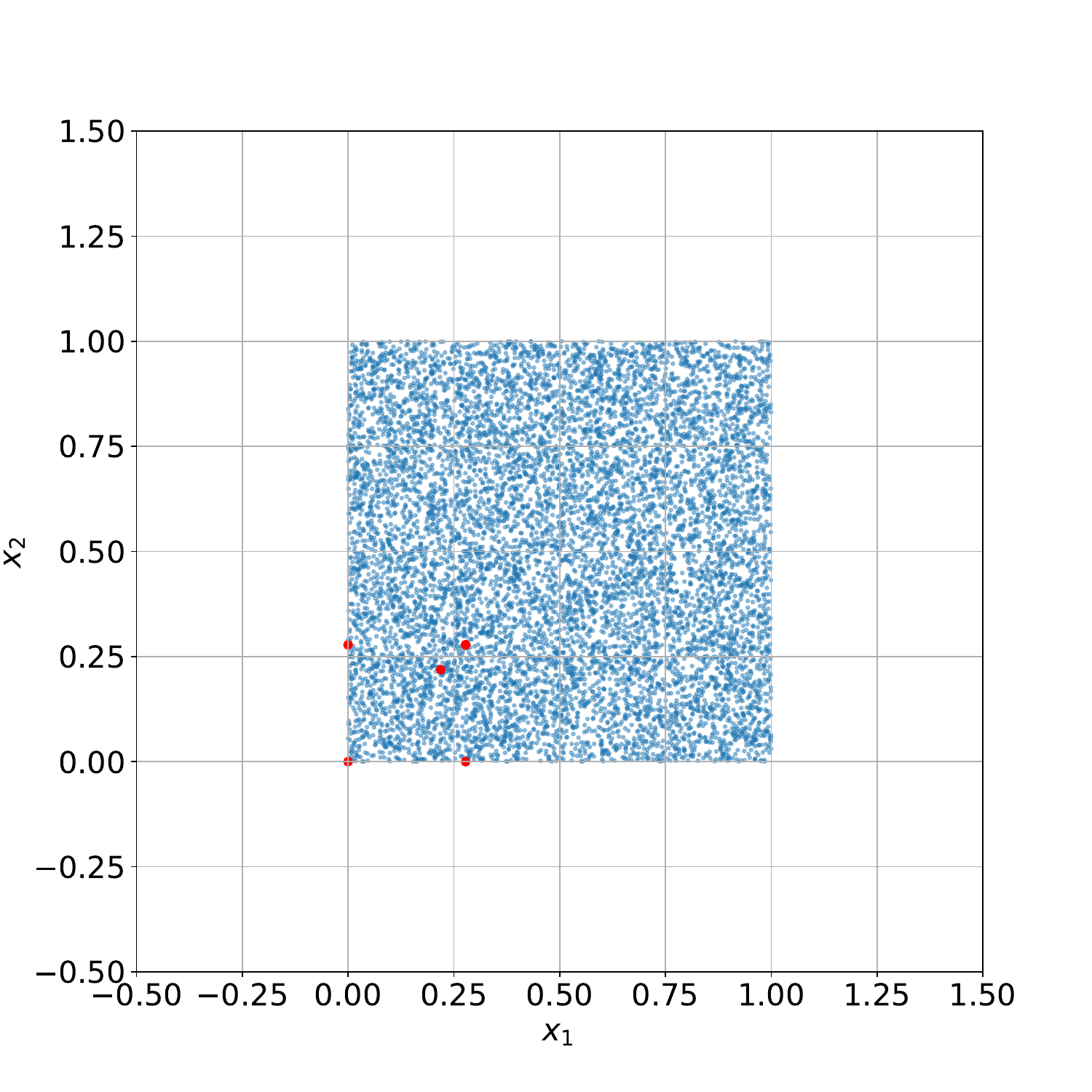}
        \caption{$t=0$}
    \end{subfigure}%
    ~ 
    \begin{subfigure}[t]{0.4\textwidth}
        \centering
        \includegraphics[width=\textwidth]{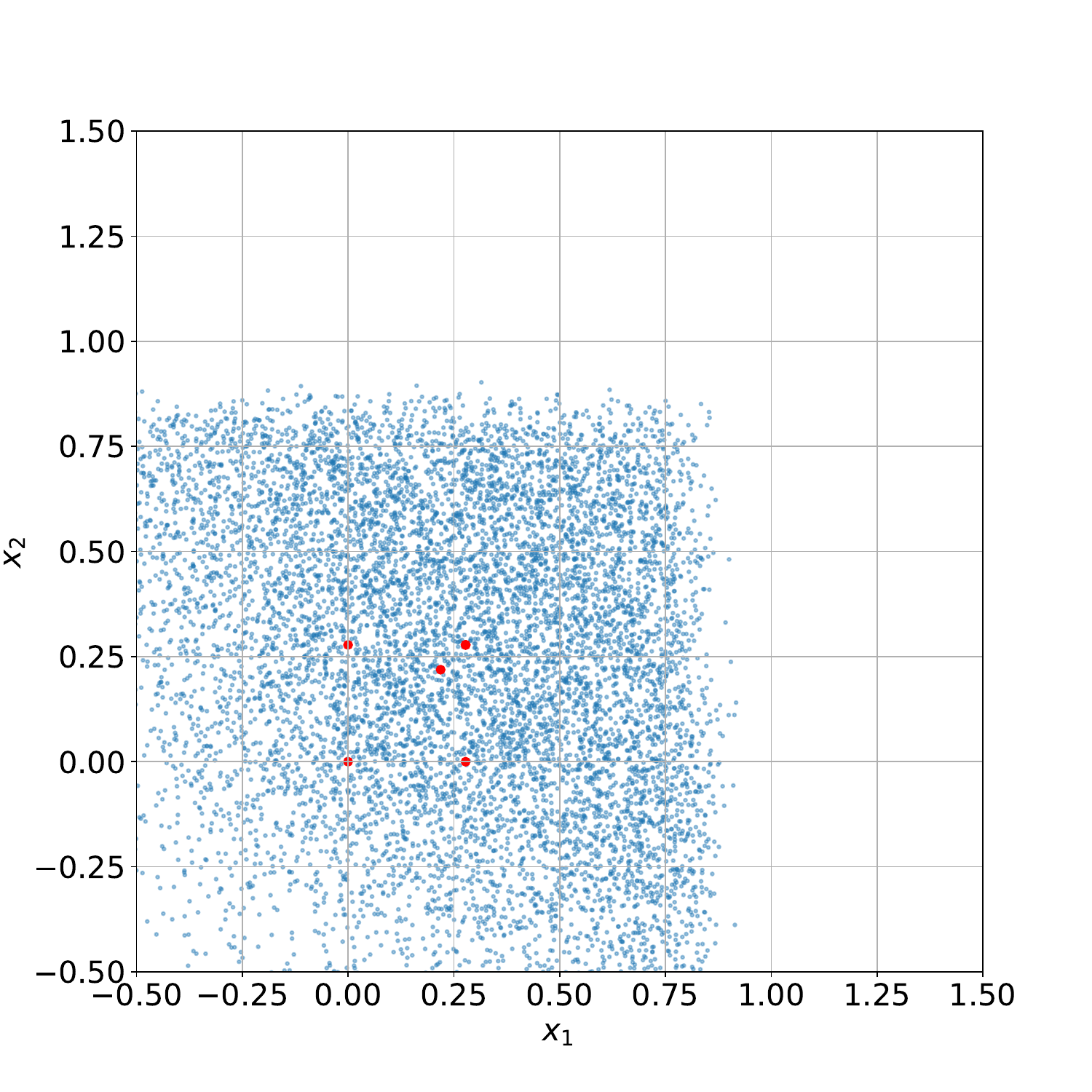}
        \caption{$t=0.12$}
    \end{subfigure}
    ~ 
    \begin{subfigure}[t]{0.4\textwidth}
        \centering
        \includegraphics[width=\textwidth]{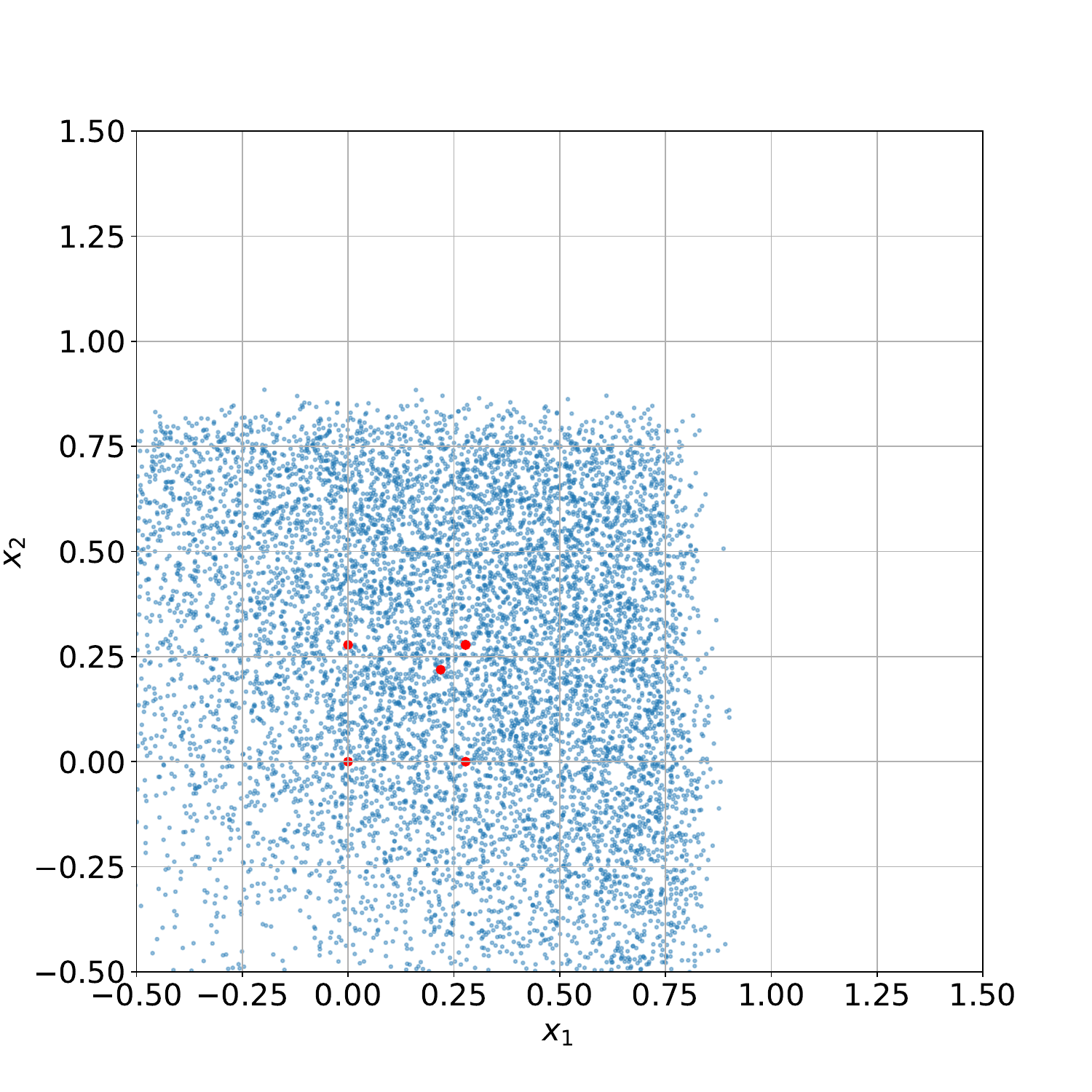}
        \caption{$t=0.14$}
    \end{subfigure}%
    ~ 
    \begin{subfigure}[t]{0.4\textwidth}
        \centering
        \includegraphics[width=\textwidth]{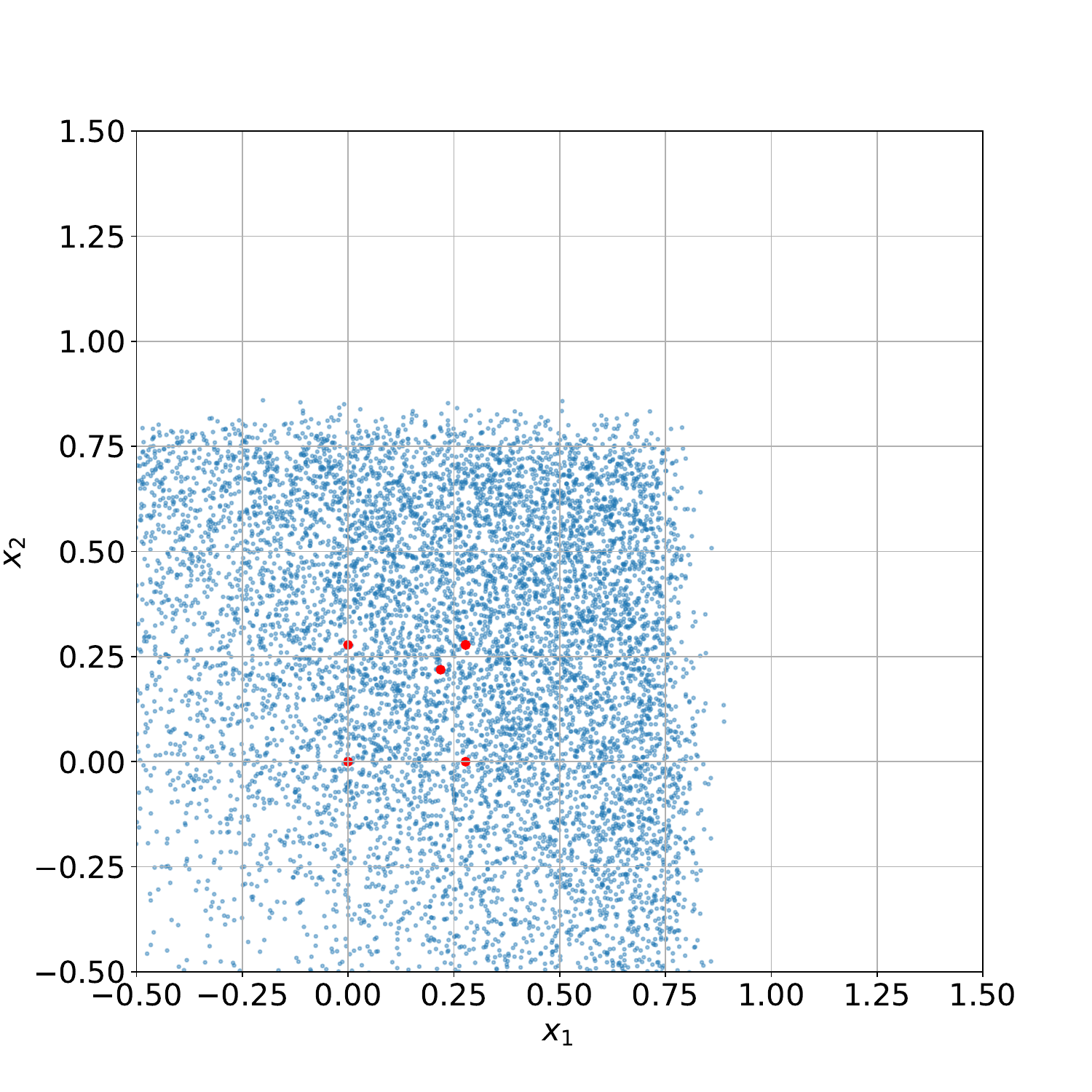}
        \caption{$t=0.16$}
    \end{subfigure}
    \caption{Plots of sample points of 5D non-potential game computed with best response dynamics from $t=0$ to $t=0.16$ projected to $x_1-x_2$ plane. The initial samples are drawn from a uniform distribution in $[0,1]^5$. The coefficient for diffusion term is $\epsilon_1 = \epsilon_2 = 0.1$. The red points mark the known Nash equilibria.}
    \label{fig: 5d_nonpotential_brd_uniform}
\end{figure}

\begin{figure}[!th]
    \centering
    \begin{subfigure}[t]{0.4\textwidth}
        \centering
        \includegraphics[width=\textwidth]{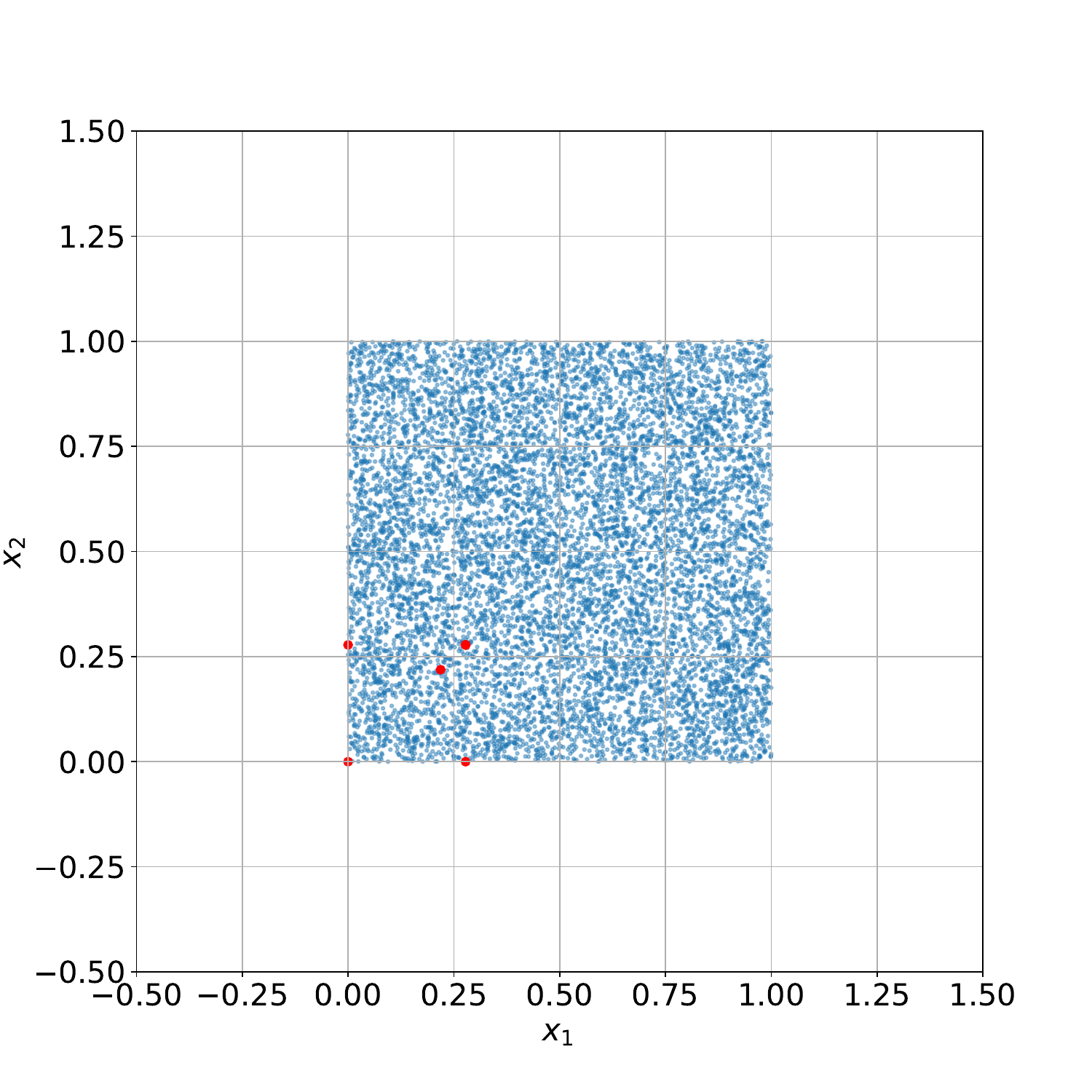}
        \caption{$t=0$}
    \end{subfigure}%
    ~ 
    \begin{subfigure}[t]{0.4\textwidth}
        \centering
        \includegraphics[width=\textwidth]{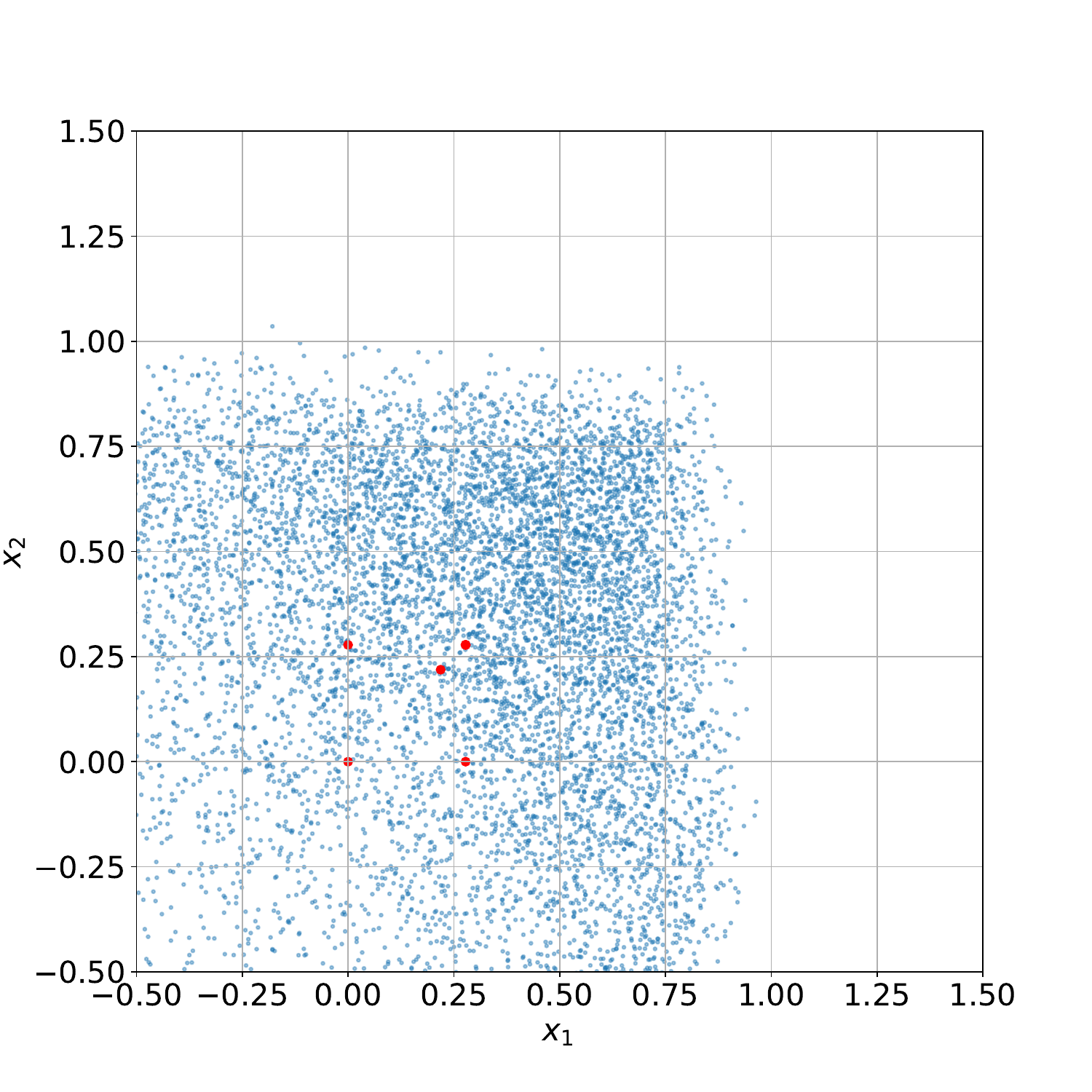}
        \caption{$t=0.12$}
    \end{subfigure}
    ~ 
    \begin{subfigure}[t]{0.4\textwidth}
        \centering
        \includegraphics[width=\textwidth]{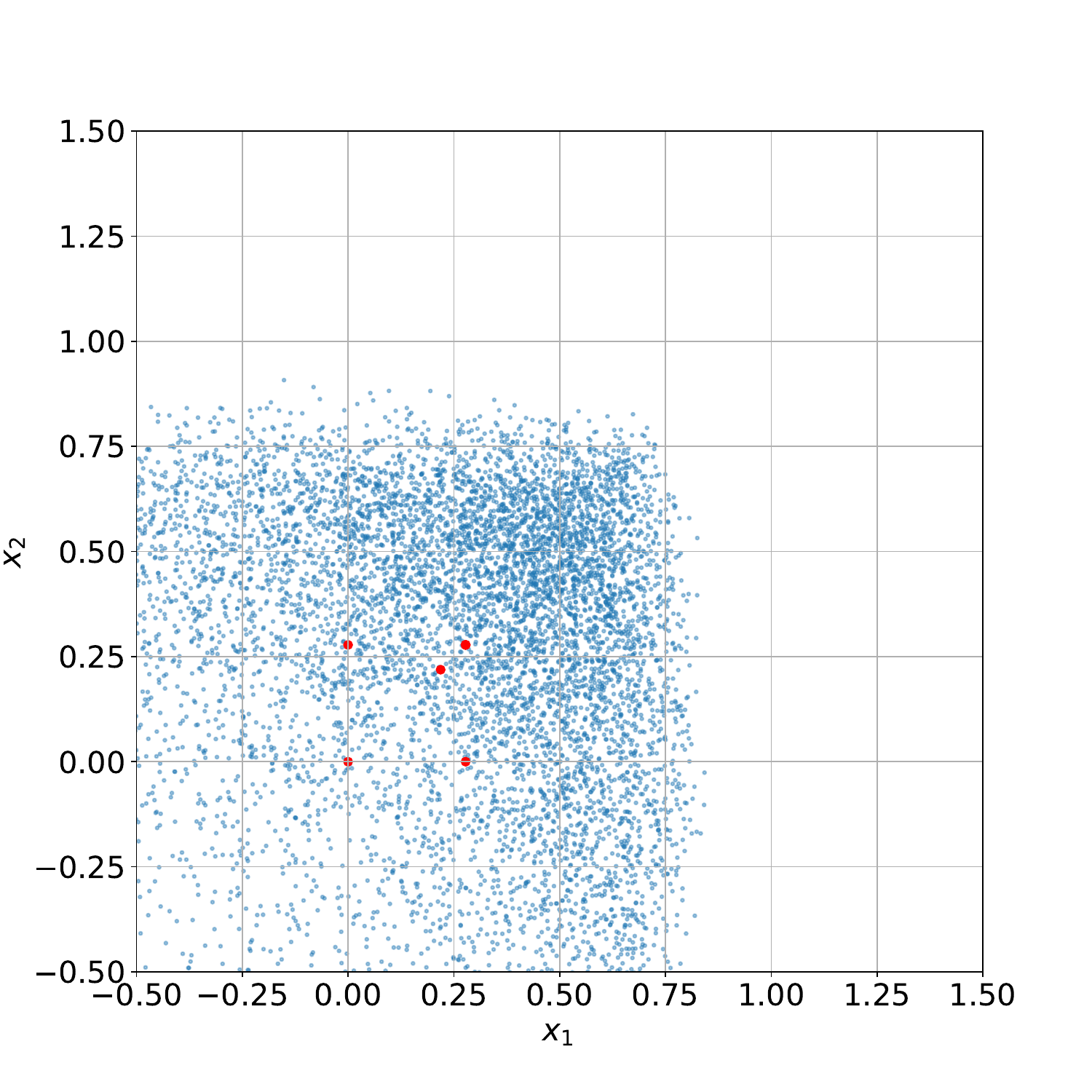}
        \caption{$t=0.14$}
    \end{subfigure}%
    ~ 
    \begin{subfigure}[t]{0.4\textwidth}
        \centering
        \includegraphics[width=\textwidth]{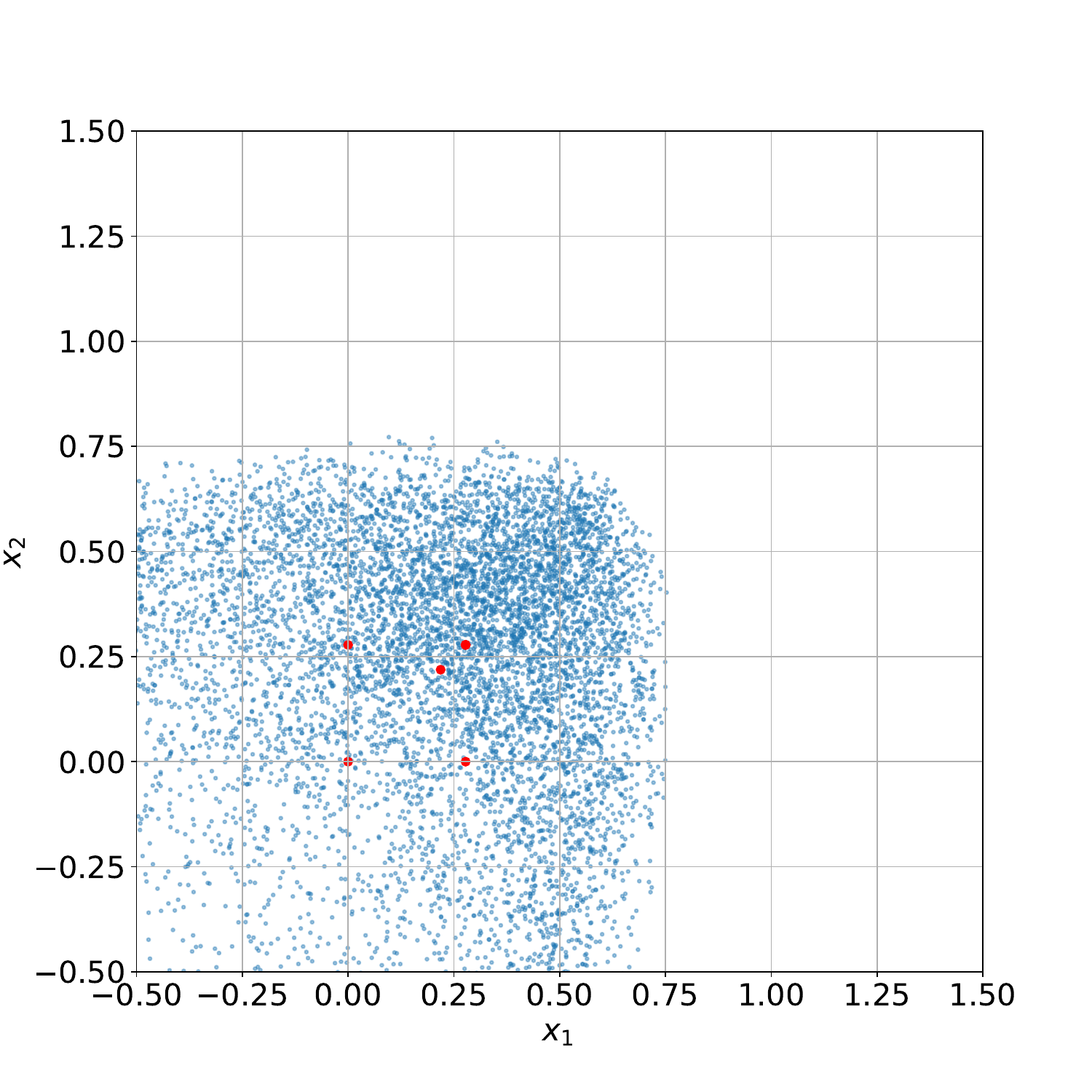}
        \caption{$t=0.16$}
    \end{subfigure}
    \caption{Plots of sample points of 5D non-potential game computed with pushforward map $T_{\theta(t)}$ from $t=0$ to $t=0.16$ projected to $x_1-x_2$ plane. The initial samples are drawn from a uniform distribution in $[0,1]^5$. The coefficient for diffusion term is $\epsilon_1 = \epsilon_2 = 0.1$. The red points mark the known Nash equilibria.}
    \label{fig: 5d_nonpotential_pushforward_uniform}
\end{figure}

\section{Discussion}
\label{sec: discussion}
We have proposed a new approach for computing game dynamics by focusing on how the probability of selecting different strategies evolves over time. The probability densities are represented by parameterized pushforward maps, which enables us to reduce the infinite-dimensional Fokker-Planck equation to a finite-dimensional ODE system on the parameter space. A key contribution of this work is a novel derivation of the parameterized equation that does not rely on a gradient-flow structure, making the computational framework applicable to general, non-gradient systems. Moreover, because the parameterized framework keeps the geometric structure of the parameter space, we're able to obtain an error estimate in Wasserstein-2 metric, in a manner analogous to PWGF. 

The proposed parameterized game dynamics provide a scalable and flexible computational tool, particularly well-suited for high-dimensional continuous strategy spaces where traditional grid-based methods are infeasible. The accuracy and computational speed of the algorithm depend on the performance of the linear system solver used to update the neural network parameters. In this work, Neural ODEs are employed to construct the parameterized pushforward maps, enabling simultaneous generation of samples and probability densities. Alternative neural network architectures and linear system solvers could be considered in future work to improve accuracy.

\backmatter





\bmhead{Acknowledgements}
This research is partially supported by NSF grants DMS-2152960, DMS-2307465, DMS-2307466, and ORN grant N00014-21-1-2891. All authors made equal contributions.



\section*{Declarations}


\begin{itemize}
\item Conflict of interest/Competing interests: No conflict of interest between the authors. 
\item Data availability: Data will be made available on reasonable request. 
\item Code availability: Code will be made available on reasonable request.
\item Author contribution: Equal contribution for both authors. 
\end{itemize}

\bibliography{references}

@article{Lafferty,
  title = {The {{Density Manifold}} and {{Configuration Space Quantization}}},
  volume = {305},
  number = {2},
  journal = {Transactions of the American Mathematical Society},
  author = {Lafferty, John D.},
  year = {1988},
  pages = {699-741}
}

@article{otto-PME,
  title = {The {{Geometry}} of {{Dissipative Evolution Equations}}: {{The Porous Medium Equation}}},
  volume = {26},
  number = {1-2},
  journal = {Communications in Partial Differential Equations},
  author = {Otto, Felix},
  year = {2001},
  pages = {101-174}
}

@article{kopel1996simple,
  title={Simple and complex adjustment dynamics in Cournot duopoly models},
  author={Kopel, Michael},
  journal={Chaos, Solitons \& Fractals},
  volume={7},
  number={12},
  pages={2031--2048},
  year={1996},
  publisher={Elsevier}
}

@article{jin2025parameterized,
  title={Parameterized Wasserstein gradient flow},
  author={Jin, Yijie and Liu, Shu and Wu, Hao and Ye, Xiaojing and Zhou, Haomin},
  journal={Journal of Computational Physics},
  volume={524},
  pages={113660},
  year={2025},
  publisher={Elsevier}
}

@book{hofbauer1998evolutionary,
  title={Evolutionary games and population dynamics},
  author={Hofbauer, Josef and Sigmund, Karl},
  year={1998},
  publisher={Cambridge university press},
  address   = {Cambridge, UK}
}

@incollection{smith1982evolution,
  title={Evolution and the Theory of Games},
  author={Smith, John Maynard},
  booktitle={Did Darwin get it right? Essays on games, sex and evolution},
  pages={202--215},
  year={1982},
  publisher={Springer},
  address   = {Boston, MA}
}

@article{nash1950equilibrium,
  title={Equilibrium points in n-person games},
  author={Nash Jr, John F},
  journal={Proceedings of the national academy of sciences},
  volume={36},
  number={1},
  pages={48--49},
  year={1950},
  publisher={national academy of sciences}
}

@book{cournot1838recherches,
  title={Recherches sur les principes math{\'e}matiques de la th{\'e}orie des richesses},
  author={Cournot, Antoine Augustin},
  year={1838},
  publisher={L. Hachette},
  address  = {Paris},
}

@article{monderer1996potential,
  title={Potential games},
  author={Monderer, Dov and Shapley, Lloyd S},
  journal={Games and economic behavior},
  volume={14},
  number={1},
  pages={124--143},
  year={1996},
  publisher={Elsevier}
}

@article{rosenthal1973class,
  title={A class of games possessing pure-strategy Nash equilibria},
  author={Rosenthal, Robert W},
  journal={International Journal of Game Theory},
  volume={2},
  pages={65--67},
  year={1973},
  publisher={Physica-Verlag}
}

@incollection{von2007theory,
  title={Theory of games and economic behavior: 60th anniversary commemorative edition},
  author={Von Neumann, John and Morgenstern, Oskar},
  booktitle={Theory of games and economic behavior},
  year={2007},
  publisher={Princeton university press},
  address   = {Princeton, NJ}
}

@article{smith1972game,
  title={Game theory and the evolution of fighting},
  author={Smith, J Maynard},
  journal={On evolution},
  pages={8--28},
  year={1972},
  publisher={Edinburgh University Press Edinburgh, UK}
}

@article{smith1973logic,
  title={The logic of animal conflict},
  author={Smith, J Maynard and Price, George R},
  journal={Nature},
  volume={246},
  number={5427},
  pages={15--18},
  year={1973},
  publisher={Nature Publishing Group UK London},
}

@article{brown1951iterative,
  title={Iterative solution of games by fictitious play},
  author={Brown, George W},
  journal={Act. Anal. Prod Allocation},
  volume={13},
  number={1},
  pages={374},
  year={1951}
}

@book{hargreaves2004game,
  title={Game theory: A critical introduction},
  author={Hargreaves-Heap, Shaun and Varoufakis, Yanis},
  year={2004},
  publisher={Routledge},
  address   = {London and New York}
}

@book{fudenberg1998theory,
  title={The theory of learning in games},
  author={Fudenberg, Drew and Levine, David K},
  volume={2},
  year={1998},
  publisher={MIT press},
  address   = {Cambridge, MA}
}

@book{sandholm2010population,
  title={Population games and evolutionary dynamics},
  author={Sandholm, William H},
  year={2010},
  publisher={MIT press},
  address = {Cambridge, MA}
}

@article{monderer1996fictitious,
  title={Fictitious play property for games with identical interests},
  author={Monderer, Dov and Shapley, Lloyd S},
  journal={Journal of economic theory},
  volume={68},
  number={1},
  pages={258--265},
  year={1996},
  publisher={Elsevier}
}

@article{wu2025parameterized,
  title={Parameterized wasserstein hamiltonian flow},
  author={Wu, Hao and Liu, Shu and Ye, Xiaojing and Zhou, Haomin},
  journal={SIAM Journal on Numerical Analysis},
  volume={63},
  number={1},
  pages={360--395},
  year={2025},
  publisher={SIAM}
}

@article{hernandez2017survey,
  title={A survey of learning in multiagent environments: Dealing with non-stationarity},
  author={Hernandez-Leal, Pablo and Kaisers, Michael and Baarslag, Tim and De Cote, Enrique Munoz},
  journal={arXiv preprint arXiv:1707.09183},
  year={2017}
}

@article{schuster1983replicator,
  title={Replicator dynamics},
  author={Schuster, Peter and Sigmund, Karl},
  journal={Journal of theoretical biology},
  volume={100},
  number={3},
  pages={533--538},
  year={1983},
  publisher={Elsevier}
}

@inproceedings{chen2024teng,
  title={TENG: Time-Evolving Natural Gradient for Solving PDEs With Deep Neural Nets Toward Machine Precision},
  author={Chen, Zhuo and Mccarran, Jacob and Vizcaino, Esteban and Soljacic, Marin and Luo, Di},
  booktitle={International Conference on Machine Learning},
  pages={7143--7162},
  year={2024},
  organization={PMLR}
}

@article{wu2025deep,
  title={Deep Tangent Bundle (DTB) method: a Deep Neural Network approach to compute solutions of PDES},
  author={Wu, Hao and Zhou, Haomin},
  journal={arXiv preprint arXiv:2509.00957},
  year={2025}
}

@article{blume1993statistical,
  title={The statistical mechanics of strategic interaction},
  author={Blume, Lawrence E},
  journal={Games and economic behavior},
  volume={5},
  number={3},
  pages={387--424},
  year={1993},
  publisher={Elsevier}
}

@article{hofbauer2002global,
  title={On the global convergence of stochastic fictitious play},
  author={Hofbauer, Josef and Sandholm, William H},
  journal={Econometrica},
  volume={70},
  number={6},
  pages={2265--2294},
  year={2002},
  publisher={Wiley Online Library}
}

@article{benaim1999mixed,
  title={Mixed equilibria and dynamical systems arising from fictitious play in perturbed games},
  author={Bena{\i}m, Michel and Hirsch, Morris W},
  journal={Games and Economic Behavior},
  volume={29},
  number={1-2},
  pages={36--72},
  year={1999},
  publisher={Elsevier}
}

@article{hofbauer2007evolution,
  title={Evolution in games with randomly disturbed payoffs},
  author={Hofbauer, Josef and Sandholm, William H},
  journal={Journal of economic theory},
  volume={132},
  number={1},
  pages={47--69},
  year={2007},
  publisher={Elsevier}
}

@incollection{benamou2017variational,
  title={Variational mean field games},
  author={Benamou, Jean-David and Carlier, Guillaume and Santambrogio, Filippo},
  booktitle={Active Particles, Volume 1: Advances in Theory, Models, and Applications},
  pages={141--171},
  year={2017},
  publisher={Springer}
}

@article{lasry2007mean,
  title={Mean field games},
  author={Lasry, Jean-Michel and Lions, Pierre-Louis},
  journal={Japanese journal of mathematics},
  volume={2},
  number={1},
  pages={229--260},
  year={2007},
  publisher={Springer}
}

@article{achdou2020mean,
  title={Mean field games and applications: Numerical aspects},
  author={Achdou, Yves and Lauri{\`e}re, Mathieu},
  journal={Mean Field Games: Cetraro, Italy 2019},
  pages={249--307},
  year={2020},
  publisher={Springer}
}

@article{matsui1992best,
  title={Best response dynamics and socially stable strategies},
  author={Matsui, Akihiko},
  journal={Journal of Economic Theory},
  volume={57},
  number={2},
  pages={343--362},
  year={1992},
  publisher={Elsevier}
}

@article{smith1984stability,
  title={The stability of a dynamic model of traffic assignment—an application of a method of Lyapunov},
  author={Smith, Michael J},
  journal={Transportation science},
  volume={18},
  number={3},
  pages={245--252},
  year={1984},
  publisher={INFORMS}
}

\end{document}